\theoremstyle{definition}
\newtheorem{theorem}{Theorem}[section]
\newtheorem{corollary}{Corollary}[section]
\newtheorem{prop}{Proposition}[section]
\newtheorem{definition}[theorem]{Definition}
\newtheorem{example}[theorem]{Example}
\newtheorem{lem}{Lemma}[section]
\theoremstyle{remark}
\newtheorem{remark}[theorem]{Remark}
\numberwithin{equation}{section}
\def \bP{\mathbf{P}}
\def \bQ{\mathbf{Q}}
\def \bA{\boldsymbol {A}}
\def \bd{\mathbf{d}}
\def \bmu{\boldsymbol{\mu}}
\def\my_c{c_\infty}
\newcommand{\mynewtheorem}[2]{
  \newaliascnt{#1}{dummy}
  \newtheorem{#1}[#1]{#2}
  \aliascntresetthe{#1}
  \expandafter\def\csname #1autorefname\endcsname{#2}
}
\newcommand{\be}{\begin{equation}}
\newcommand{\ee}{\end{equation}}
\newcommand{\bde}{\begin{displaymath}}
\newcommand{\ede}{\end{displaymath}}
\newcommand{\beq}{\begin{eqnarray*}}
\newcommand{\eeq}{\end{eqnarray*}}
\newcommand{\beqa}{\begin{eqnarray}}
\newcommand{\eeqa}{\end{eqnarray}}
\newcommand{\bel }{\left\{\begin{array}{ll}}
\newcommand{\eel}{\cr \end{array} \right.}
\newcommand{\seq}[1]{{\lbrace #1 \rbrace}}
\newcommand{\dcb}{\begin{array}{lll}}
\newcommand{\dce}{\end{array}}
\newcommand{\ebe}{\begin{enumerate}\setlength{\baselineskip}{13pt}\setlength{\parskip}{0pt}}
\newcommand{\dbe}{\end{enumerate}}
\newcommand{\E}{\mathcal{E}}
\def\F{{\cal F}}
\def\pp{{\cal P}}
\def\rr{{\mathbb R}}
\def\pp{\mathcal{P}(\mathbb{R}^d)}
\def\P{{\mathbb P}}
\def\E{\mathbb{E}}
\def\I{\mathsf{1}}
\def\leftB{[\![}
\def\rightB{]\!]}
\newcommand \A[1]{{\bf (#1)}}
\def\F{{\mathcal F}}
\def\R{{\mathbb{R}} }
\def\N{{\mathbb{N}} }
\def\E{{\mathbb{E}}  }
\def\P{{\mathbb{P}}  }
\def\I{{\mathbf{1}}}
\def\bint#1^#2{\displaystyle{\int_{#1}^{#2}}}
\def\bsum#1^#2{\displaystyle{\sum_{#1}^{#2}}}
\def\xdt_#1{X_#1(\Delta t)}
\newtheorem{REM}{Remark}[section]
\def\0{{\mathbf{0}}}
\def\d{{{\rm det}}}
\def\db{{\mathbf{d}}}
\begin{document}

\title{ Well-posedness of some non-linear stable driven SDEs}


\author{Noufel Frikha}
\address{Noufel Frikha, Universit\'e de Paris, Laboratoire de Probabilit\'es, Statistiques et Mod\'elisation, F-75013 Paris, France.}
\email{frikha@math.univ-paris-diderot.fr}


\author{Valentin Konakov}
\address{Valentin Konakov,
Laboratory of Stochastic Analysis, Higher School of Economics,
Pokrovsky boulevard 11, Moscow, Russian federation. }
\email{vkonakov@hse.ru}

\author{St\'ephane Menozzi}
\address{St\'ephane Menozzi, Laboratoire de Mod\'elisation Math\'ematique d'Evry (LaMME), UMR CNRS 8070, Universit\'e d'Evry Val d'Essonne, Universit\'e Paris-Saclay, 23 Boulevard de France 91037 Evry and 
Laboratory of Stochastic Analysis, Higher School of Economics,
31 Shabolovka, Moscow, Russian federation. }
\email{stephane.menozzi@univ-evry.fr}


\subjclass[2000]{Primary 60H10, 60G46; Secondary 60H30, 35K65}

\date{\today}

\keywords{McKean-Vlasov stable SDEs; non-linear martingale problem; weak uniqueness; strong uniqueness}

\begin{abstract} We prove the well-posedness of some non-linear stochastic differential equations in the sense of McKean-Vlasov driven by non-degenerate symmetric $\alpha$-stable L\'evy processes with values in $\rr^d$ under some mild H\"older regularity assumptions on the drift and diffusion coefficients with respect to both space and measure variables. The methodology developed here allows to consider unbounded drift terms even in the so-called super-critical case, i.e. when the stability index $\alpha \in (0,1)$. New strong well-posedness results are also derived from the previous analysis.
\end{abstract}

\maketitle

\section{Introduction}

In this article, we are interested in some non-linear stochastic differential equations (SDEs for short) with dynamics
\begin{equation}
\label{SDE:MCKEAN}
X_t = \xi + \int_0^t b(s,X_s, [X_s]) ds +\int_0^t  \sigma(s,X_{s^-},[X_s]) dZ_s, \quad [\xi] = \mu \in {\mathcal P}({ \R^d}) 
\end{equation}

\noindent driven by a $d$-dimensional $\alpha$-stable process $Z$, $\alpha \in (0,2)$, with coefficients $b:\rr_+\times \rr^d \times \pp\rightarrow \rr^d$ and $\sigma: \rr_+ \times \rr^d \times \pp \rightarrow \rr^d \otimes \rr^d$. Here and throughout the article, we will denote by ${\mathcal P}(\R^d) $ the space of probability measures on $\R^d $ and by $[\theta]$ the law of a random variable $\theta$. 

This type of dynamics commonly referred to in the literature as McKean-Vlasov, distribution dependent or mean-field SDEs naturally appears as the limit equation of an individual particle evolving within a large system of particles interacting through its empirical measure, as the size of the population grows to infinity, following the well-known propagation of chaos phenomenon. We refer to the original works by Kac \cite{kac1956} in kinetic theory, by McKean \cite{McKean:1966}, \cite{mckean1967propagation} in non-linear parabolic partial differential equations and by Sznitman \cite{Sznitman} in the diffusive case $\alpha=2$. 

The well-posedness theory in the weak or strong sense of such dynamics has been an active research topic during the last decades, see e.g. Funaki \cite{Funaki1984}, Oelschlaeger \cite{oelschlager1984}, G\"artner \cite{gartner}, \cite{Sznitman}, Jourdain \cite{jourdain:1997} and more recently, Li and Min \cite{Li:min:2}, Chaudru de Raynal \cite{CHAUDRUDERAYNAL2019}, Mishura and Veretennikov \cite{mishura:veretenikov}, Lacker \cite{la:18}, Chaudru de Raynal and Frikha \cite{chau:frik:18}, R\"ockner and Zhang \cite{2018arXiv180902216R} for a short sample in the diffusive setting. Let us also mention the book of Kolokolstov \cite{Kolokolstov:book} for some related works on kinetic equations. In the current strictly $\alpha$-stable context $\alpha \in (0,2)$, we mention the work of Jourdain \textit{et al.} \cite{jour:mele:woyc:08} with applications to the fractional porous media equation on the real-line and of Jourdain \textit{et al.} \cite{Jourdain:meleard:woyczynski:2005}, \cite{Jourdain:meleard:woyczynski:2005b} for a probabilistic approach to some non-linear equations involving the fractional Laplacian operator. Let us also mention the work of Graham \cite{Graham:1992} who derives existence and uniqueness results for non-linear diffusions with Lipschitz coefficients and bounded jump rates.

The classical well-posedness theory of such dynamics is now well-understood in the standard Cauchy-Lipschitz framework on the space $\rr^d \times \mathcal{P}_p(\rr^d)$, $p\geq1$, $\mathcal{P}_p(\rr^d)$ being the space of probability measures with finite $p$ moments equipped with the Wasserstein distance of order $p$.

Trying to go beyond the previous setting by weakening the regularity assumptions on the coefficients with respect to both space and measure variables remains an interesting and challenging question that has attracted the attention of the research community. In the diffusive setting $\alpha=2$, let us mention \emph{e.g.} the work \cite{scheutzow_1987} by Scheutzow where a counterexample to uniqueness for the SDE \eqref{SDE:MCKEAN} is exhibited. Namely, when $\sigma \equiv 0$ and $b(t, x, m) = b(m) = \int_{\rr} \bar{b}(y) m(dy)$ for some bounded and locally Lipschitz function $\bar{b}$, the non-linear SDE \eqref{SDE:MCKEAN} with random initial condition has several solutions. However, when $\sigma \equiv 1$, the noise helps to restore existence and uniqueness. Indeed, the well-posedness of the corresponding martingale problem when $b(t, x, m)= b(x, m) = \int_{\rr^d} \bar{b}(x, y) \, m(dy)$ has been established by Shiga and Tanaka \cite{Shiga1985}. Such a result has been extended by Jourdain \cite{jourdain:1997} where uniqueness is  proved under the more general assumption that $b$ is bounded measurable and Lipschitz with respect to the total variation metric on the space of probability measures, the diffusion coefficient being Lipschitz in space and independent of the measure argument. Still in the non degenerate diffusive framework, some extensions have been recently obtained in \cite{mishura:veretenikov} who established some weak and strong well-posedness results for a diffusion coefficient depending only on time and space variables, in \cite{la:18} and also in \cite{2018arXiv180902216R} for dynamics with a singular convolution type interaction kernel in the drift coefficient. We finally mention the recent contribution \cite{chau:frik:18} where well-posedness results in the weak and strong sense are established through a fixed point approach on a suitable complete metric space of probability measures under mild H\"older type regularity assumptions on the coefficients with respect to both space and measure arguments. In particular, weak well-posedness is established when the diffusion coefficient is not Lipschitz with respect to the underlying Wasserstein distance. Some new propagation of chaos results for the approximation of the dynamics \eqref{SDE:MCKEAN} by a system of particles have recently been obtained by the same authors in their companion paper \cite{chaudru:frikha:18:b}. In \cite{jour:mele:woyc:08}, the authors extend the classical strong well-posedness results of \cite{Sznitman} for non-linear SDEs when both initial condition and L\'evy driving process are square integrable. When the L\'evy process is not square integrable, as it is the case in our current $\alpha$-stable framework, existence without uniqueness is proved when both coefficients are Lipschitz with respect to some bounded modification of the Wasserstein metric of order $1$.   

We here revisit the problem of the unique solvability of the SDE \eqref{SDE:MCKEAN} by tackling the corresponding formulation of the non-linear martingale problem under some rather mild assumptions on the coefficients in both the spatial and measure arguments. Namely, we will consider H\"older continuous in space coefficients with possibly unbounded drift term $b$ and a Lipschitz regularity condition w.r.t. a suitable H\"older type metric (which actually heavily depends on the spatial H\"older exponents of the coefficients) for the measure arguments. We will also assume some \textit{usual} non-degeneracy and boundedness conditions on the  L\'evy measure $\nu $ of the driving symmetric stable process $Z$  and the \textit{diffusion} coefficient $\sigma$ appearing in \eqref{SDE:MCKEAN}.


In the current setting, we thus aim at finding a unique probability measure $\P $ on ${\mathscr D}(\R^+ ,\R^d) $ (Skorokhod space of c\`adl\`ag functions) such that denoting by $(\P(t))_{t\geq0}$ its time marginals, $(y_t)_{t\ge 0} $ the associated canonical process and by $(\mathcal{F}_t)_{t\geq0}$ its filtration, $\P(0)=\mu$ and for any $\varphi \in C_0^{1,2}(\R^+ \times {\R}^d, \rr)$ the process
\begin{equation}
\label{eq:martingale:problem:formulation}
\varphi(t, y_t) - \varphi(0, y_0) - \int_0^{t}\big(\partial_r+ \mathscr{A}^{\P}_r\big) \varphi(r, y_r) dr
\end{equation}

\noindent is an $((\mathcal{F}_t)_{t\geq0}, \P)$-martingale starting from 0 at time $0$. In the above formulation, $ \mathscr{A}^{\P}_r$ is the following integro-differential operator
\begin{equation}\label{FIRST_ID_OP_LIN}
{\mathscr A}^{\P}_r\varphi(r, x)=\langle b(r, x, \P(r)),D_x\varphi(r, x)\rangle+\int_{\R^{d}\backslash\{ 0\}} \big(\varphi(r, x+\sigma(r, x, \P(r))z)-\varphi(r, x)- \langle \nabla \varphi (r, x), \sigma(r, x, \P(r))z\rangle \I_{|z|\le 1} \big)\nu(dz)
\end{equation}

\noindent acting on smooth test functions and in \eqref{FIRST_ID_OP_LIN} $\nu $ is a symmetric stable L\'evy measure on $\R^{d} $, i.e. for $\alpha\in (0,2) $, writing in polar coordinates  $\zeta=r \xi $, $(r,\xi) \in \R_+^*\times {\mathbb S}^{d-1}$:
\begin{equation}\label{stab_decomp}
\nu(d\zeta)=\frac{dr}{r^{1+\alpha}}\omega(d\xi),
\end{equation}
where $\omega $ is a symmetric measure on the sphere ${\mathbb S}^{d-1} $ satisfying some non-degeneracy assumptions.

Our main idea consists in applying a fixed point argument to a family of \textit{proxy linearized} martingale problems on a suitable complete metric space of probability measures. In the current strictly stable setting $\alpha \in (0,2)$, for the well-posedness in the weak and strong sense of the corresponding linear martingale problem, \emph{i.e.} when the dependence with respect to the measure argument is frozen, we can mention among others Tanaka \emph{et al.} \cite{tanaka1974}, Bass \cite{Bass2003}, Bass \emph{et al.} \cite{BASS20041}, Mikulevicius and Pragarauskas \cite{miku:prag:14}, Priola \cite{prio:18} or Zhang \textit{et al.} \cite{hao:wu:zhan:19} for the specific (possibly degenerate) stable case. In our previously mentionned H\"older setting, the main idea to complete our fixed point approach, consists in exploiting some parabolic Schauder estimates but under some \textit{rough} regularity conditions on the final condition, whose regularity will be somehow related to the spatial one of the coefficients. Such estimates are obtained through a forward parametrix perturbation argument in the same spirit as those established by Chaudru de Raynal \emph{et al} \cite{chau:hono:meno:18}, \cite{chau:meno:prio:19} in the Kolmogorov degenerate diffusive setting or in the $\alpha$-stable supercritical case. 

As a by-product of the usual strong uniqueness results on linear stable driven SDEs, the strong well-posedness of the corresponding non-linear SDE is also derived under suitable regularity assumptions on the coefficients and a non-degeneracy assumption on the L\'evy measure. Again, such results will be obtained for a bounded non-degenerate $\sigma$, with $b,\sigma $ being H\"older continuous in space and for potentially unbounded in space drift terms $b$.  Moreover, concerning the measure argument, the maps $\pp \ni m \mapsto b(t, x, m), \, \sigma(t, x, m)$ are also assumed to have a bounded linear functional (or flat) derivative denoted by $[\delta b/\delta m](t, x, m)(y)$ and $[\delta \sigma/\delta m](t, x, m)(y)$  (see Section \ref{sec:overview:assumption:main:result} for a precise definition) which is  H\"older-continuous w.r.t. both variables $x$ and $y$. 

 Compared to the aforementioned references in the strictly $\alpha$-stable context, our methodology allows to considerably weaken the regularity assumptions on the coefficients $b$ and $\sigma$ with respect to both space and measure variables. It also covers a large class of interaction in the measure argument with H\"older type regularity. In particular, no Lipschitz regularity with respect to Wasserstein distance is needed which, to the best of our knowledge, appears to be new. Let us indicate as well that, even in the stable Gaussian case,  i.e. $\alpha=2$ for which  $Z=B$ (standard Brownian motion), our approach applies and would provide well-posedness for non-linear SDEs with unbounded drift terms and full dependence on the measure in the diffusion coefficient (see Remark \ref{remarks:possible:improvements} below).

The article is organized as follows. The basic definitions together with the assumptions, the main results and the strategy of proof are described in Section \ref{sec:overview:assumption:main:result}. The sensitivity analysis w.r.t the measure argument of the semigroups generated by the linearized $\alpha$-stable driven SDE is carried out in Section \ref{PROOF_GROSSE_PROP}. The proof of some useful technical results are given in Appendix.

\section{Overview: definitions, assumptions and main results}\label{sec:overview:assumption:main:result}


In order to tackle the non-linear martingale problem as introduced in \eqref{eq:martingale:problem:formulation}, we will proceed through a fixed point procedure on a suitable complete metric space of probability measure valued flows. Let us now describe this functional space.

\bigskip

\noindent  \textbf{The metric space of measure valued flows.} We first endow ${\mathcal P}(\R^d) $ with the following metric. Fix $\beta \in (0,1] $ and define for ${\mu, \nu}\in {\mathcal P}(\R^d) $,
\begin{equation}
\label{DIST_CONT_MEAS}
\db_\beta(\mu,\nu) := \sup_{f: 
\|f\|_{{\mathcal C}^\beta}\leq 1}\int f(z) (\mu-\nu)(dz)  ,
\end{equation}
\noindent  where $\|f\|_{{\mathcal C}^\beta}:=|f|_\infty +\sup_{(x,y)\in {(\R^d)}^2,\ x\neq y}\frac{|f(x)-f(y)|}{|x-y|^\beta} $ denotes the usual H\"older norm on ${\mathcal C}^\beta(\R^d,\R) $, where $|\cdot| $ stands for the usual Euclidean norm of $\R^d $ (see e.g. Krylov \cite{kryl:96}). For $\beta= 1$, $\db_\beta$ is also known as the Fortet-Mourier distance. Let us introduce now the cost function $c_\beta(x, y) = |x-y|^\beta \wedge 1$, $(x, y)\in \R^d$. Denote by $\Pi(\mu, \nu)$ the set of all transport plans from $\mu$ to $\nu$ w.r.t $c_\beta$. Then, for any $\pi \in \Pi(\mu, \nu)$ and any $f$ satisfying $\|f\|_{{\mathcal C}^\beta}\leq 1$, it is readily seen
$$
|\int f(z) (\mu-\nu)(dz)| = |\int (f(x)-f(y)) \pi(dx, dy)| \leq 2\int c_\beta(x, y) \pi(dx, dy)
$$

\noindent so that optimizing w.r.t $\pi$ and $f$
$$
\db_\beta(\mu, \nu) \leq 2 \widetilde{W}_\beta(\mu, \nu):= 2 \inf_{\Pi(\mu, \nu)}\int c_\beta(x, y) \pi(dx, dy).
$$

Let us note that $({\mathcal P}(\R^d),\db_\beta)$ is a complete metric space\footnote{If $(\mu_k)_{k\in \N}$ is a Cauchy sequence in $\mathcal{P}(\R^d)$ equipped with $\db_\beta$, then a slight modification of the proof of Lemma 6.12 in \cite{Villani:old:and:new} allows to conclude that it is tight so that it admits a subsequence (still denoted by $(\mu_k)_{k\in \N}$) which weakly converges to some measure $\mu$. For each $k$, introduce an optimal transport plan $\pi_k$ between $\mu_k$ and $\mu$ (see e.g. Theorem 4.1 in \cite{Villani:old:and:new} for the existence of such optimal coupling) for the induced related cost function $c_\beta$. By Lemma 4.4 in  \cite{Villani:old:and:new}, $(\pi_k)_{k \in \N}$ is tight in $\mathcal{P}(\R^d \times \R^d)$. Hence, up to an extraction, we may assume that $\pi_k \rightarrow \pi$ weakly as $k\rightarrow \infty$. Since each $\pi_k$ is optimal, Theorem 5.19 of \cite{Villani:old:and:new} guarantees that $\pi$ is an optimal (trivial) coupling between $\mu$ and $\mu$ so that $\lim\sup_{k \rightarrow \infty}\db_\beta(\mu_k, \mu) \leq 2 \lim\sup_{k\rightarrow \infty} \int_{(\rr^d)^2} c_\beta(x,y) \pi_k(dx, dy) = 0$. Since $(\mu_k)_{k\in \N}$ is a Cauchy sequence with a converging subsequence (w.r.t the metric $\db_\beta$), it eventually converges to $\mu$.} and that $\db_\beta$ metrizes the weak convergence of probability measures. 

For fixed $0 \leq s <t < \infty$, we then introduce the set of continuous probability measure valued flows $\mathcal{C}([s,t],\mathcal{P} (\R^d))$. We will from now on denote the elements of $\mathcal{C}([s,t],\mathcal{P} (\R^d))$ with a bold capital letter and  equip it with the following uniform metric. For all $\bP,\bP'\in \mathcal{C}([s,t],\mathcal{P} (\R^d)) $,
\begin{equation}
\label{DIST_CONT_MEAS_FLOWS}
\db_{\beta, s,t}({\mathbf P},{\mathbf P}') := \sup_{r\in [s,t]} \left\{ \db_\beta({\mathbf P}(r),{\mathbf P}'(r)) \right\}.
\end{equation}
Since $(\mathcal{P}(\rr^d), \db_\beta)$ is a complete metric space, it follows that $\mathcal{C}([s, t],\mathcal{P} (\R^d)) $ equipped with $\db_{\beta, s,t}$ is also complete.  
We refer to Villani \cite{Villani:old:and:new} for related issues concerning the aforementioned points. 
There are many topologies with which we can equip the space $\mathcal{C}([s,t],\mathcal{P} (\R^d))$. The above choice of metric strongly derives from the fact that a H\"older-type regularity assumption on both coefficients $b$ and $\sigma$ appears to be sufficient to establish the well-posedness in the weak sense of standard linear stable driven SDEs (see e.g. \cite{miku:prag:14}). Let us also mention that, beyond our H\"older setting, the choice of the metric is very much related to the spatial smoothness of the coefficients. In particular, when little or no regularity is available in space for the coefficients (bounded or $L^q-L^p $, see e.g. \cite{mishura:veretenikov} or \cite{2018arXiv180902216R}) the \textit{natural} metric for the measure appears to be the total variation one.

\smallskip

Now, for a given $\mu \in \mathcal{P}(\R^d)$, we introduce the subset $\mathscr{\bA}_{s, t, \mu}$ of $\mathcal{C}([s,t],\mathcal{P} (\R^d))$ defined by
\begin{align}
\mathscr{\bA}_{s,t, \mu} & := \left\{ {\mathbf P} \in \mathcal{C}([s,t], \mathcal{P}(\R^d)): {\mathbf P}(s)= \mu \right\} \label{space:prob:mes}.
\end{align}
 The subspace $\mathscr{\bA}_{s,t, \mu} $ equipped with the metric defined in \eqref{DIST_CONT_MEAS_FLOWS} can also be viewed as a complete metric space.
 \bigskip
 
 In order to rigorously formulate our regularity assumption on the coefficients w.r.t the measure argument, we introduce the notion of flat derivative of a continuous map $U: \mathcal{P}(\R^d) \rightarrow \R$, $\mathcal{P}(\R^d)$ being equipped with the distance $\db_\beta$ for some $\beta \in (0,1]$. This notion will play a key role in our analysis. We refer to Chapter 5 of the monograph by Carmona and Delarue \cite{Carmona:Delarue:book:1} for a more detailed discussion on the notion of differentiability of functions of probability measures.

\begin{definition}[Flat derivative of $U$] \label{def:flat:derivative}The continuous map $U: \pp \rightarrow \rr$ is said to have a continuous linear functional derivative if there exists a continuous function $\delta U/\delta m: \pp \times \rr^d \rightarrow \rr$ such that $y\mapsto [\delta U/\delta m](m)(y)$ is bounded, uniformly in $m$ for $m\in \mathcal{K}$, $\mathcal{K}$ being any compact subset of $\pp$ and such that for any $m, m' \in \pp$,
$$
\lim_{\varepsilon \downarrow 0} \frac{U((1-\varepsilon) m + \varepsilon m') - U(m)}{\varepsilon} = \int_{\rr^d} \frac{\delta U}{\delta m}(m)(y) \, (m'-m)(dy).
$$ 

The map $y\mapsto [\delta U/\delta m](m)(y)$ being defined up to an additive constant, we will follow the usual normalization convention $\int_{\rr^d} [\delta U/\delta m](m)(y) \, m(dy) = 0$.
\end{definition}

Let us illustrate the structure of this notion of differentiation with a couple of commonly encountered examples.
\begin{example} 
\begin{enumerate}

\item If the function $U$ is of scalar form, namely, $U(m) = \int_{\rr^d} h(y) m(dy)$ for some real-valued bounded and continuous function $h$ defined on $\R^d$ then it is readily seen that 
$$
\frac{\delta U}{\delta m}(y) = h(y).
$$ 

\smallskip

\item If the function $U$ is of quadratic type w.r.t the measure, namely, denoting by $\star$ the usual convolution operator, $U(m) = \int_{\R^d} [h\star m](x) m(dx) = \int_{(\rr^d)^2}h(x-y) \, m(dx)m(dy)$ for some bounded and continuous function $h$ defined on $\R^d$ then one has
$$
\frac{\delta U}{\delta m}(y) = \int_{\rr^d} h(x-y) \, m(dx) + \int_{\rr^d} h(y-x) \, m(dx).
$$
\end{enumerate}

\end{example}
 \bigskip
 Remark that if $U$ has a linear functional derivative then for any $m, m' \in \pp$, it holds
 \begin{equation}\label{taylor:exp:mes}
 U(m) - U(m') = \int_0^1 \int_{\rr^d} \frac{\delta U}{\delta m}(\lambda m + (1-\lambda) m')(y) \, (m-m')(dy)\, d\lambda.
 \end{equation}

 \noindent
 \textbf{The family of inhomogeneous integro-differential operators.} In order to solve locally in time the non-linear martingale problem \eqref{eq:martingale:problem:formulation}, we will perform a fixed point argument in the metric space $\mathscr{\bA}_{s,t, \mu} $. We thoroughly need to consider some inhomogeneous \textit{linearized} integro-differential operators associated with a given measure flow ${\boldsymbol \mu} \in  \mathcal{C}([s,t], \mathcal{P}(\R^d))$. Namely, for 
 $(r, x) \in [s, t] \times \R^d$ and all $\varphi\in C_0^{1, 2}([s, t] \times \R^d,\R) $, we abuse the definition introduced in \eqref{FIRST_ID_OP_LIN}\footnote{Observe indeed that $\P $ in \eqref{FIRST_ID_OP_LIN} is a probability measure on $\mathscr D (\R^+,\R)$ whereas $\bmu\in {\mathcal C}([s,t],\mathcal P(\R^d))$ is a flow of $\R^d$-valued probability measures.} and write
\begin{equation}\label{THE_GEN_STABLE}
\mathscr{A}_r^{\bmu}\varphi(r, x)\equiv \Big(\mathscr{A}(r, \cdot, \bmu; D_x)\Big)\varphi(r, x) =L_r^{\bmu,\alpha} \varphi(r, x)  +  \langle b(r, x, \bmu(r)), D_x\varphi(r, x) \rangle,
\end{equation}
where 
\begin{equation}\label{OP_STABLE_MCKEAN}
L_r^{\bmu,\alpha}\varphi(r, x)={\rm {p.v.}}\int_{\R^d} \big(\varphi(x+\sigma(r,x,\bmu(r)) \zeta)-\varphi(x) \big) \nu(d\zeta),
\end{equation}
and $\nu$ is the L\'evy measure given by \eqref{stab_decomp}.

We now introduce some non-degeneracy and regularity assumptions on the L\'evy measure and coefficients.
\subsubsection*{Non-degeneracy assumptions}
\begin{trivlist}
\item[\A{ND}] There exists  $ \kappa \ge 1$ s.t. for all $z \in \R^d$:
\begin{equation}
\label{NON_DEG_STABLE}
\kappa^{-1}|z|^\alpha\le \int_{{\mathbb S}^{d-1}}|\langle z ,\xi  \rangle |^\alpha
\omega(d\xi)\le \kappa|z|^\alpha.
\end{equation}
\end{trivlist}
Note that a wide family of spectral measures satisfy condition \eqref{NON_DEG_STABLE}, from absolutely continuous ones to (very) singular ones. For instance, taking $\omega=\Lambda_{{\mathbb S}^{d-1}} $ (Lebesgue measure on the sphere) readily yields the previous control. On the other hand, any measure $\omega=\sum_{i=1}^d c_i (\delta_{e_i}+\delta_{-e_i}) $ with positive coefficients $(c_i)_{i\in \leftB 1,d \rightB}$, where the $(e_i)_{i\in \leftB 1,d\rightB} $ stand for the canonical vectors, also satisfies \eqref{NON_DEG_STABLE}.  

In order to solve the non-linear martingale problem, we will also assume that:
\begin{trivlist}
\item[\A{AC}] The jump measure writes: $\nu(dy)=f(y)dy $ for a continuously differentiable function  $f:\R^d\rightarrow \R$ with bounded and Lipschitz first order derivative. Since $\nu $ is a symmetric stable L\'evy measure (see \eqref{stab_decomp}), we also have that:
\begin{eqnarray}
\label{EXPR_FOR_F}
f(y)=\frac{g(\frac{y}{|y|})}{|y|^{d+\alpha}},
\end{eqnarray}
where $g$ is an even and continuously differentiable spherical function with a bounded and Lipschitz first order derivative.
\end{trivlist}


We also assume that the \textit{diffusion} coefficient $\sigma$ in \eqref{OP_STABLE_MCKEAN} satisfies the following uniform ellipticity condition. 

\begin{trivlist}
\item[\A{UE}]There exists $\Lambda \ge 1 $ s.t. for all $(t,z,\mu)\in  \R_+ \times \R^d \times{\mathcal P}(\R^d)$ and all $\xi \in \R^d $:
\begin{equation}
\label{COND_UE_S}
\Lambda^{-1} |\xi|^2 \le \langle (\sigma^*\sigma)(t,z,\mu)\xi, \xi \rangle \le \Lambda |\xi|^2.
\end{equation}
\end{trivlist}

\subsubsection*{Regularity assumptions on the coefficients}

We will also need some regularity assumptions on the coefficients $\sigma$ and $b$ w.r.t both the spatial and measure arguments. 

We assume that the diffusion coefficient $\sigma$ satisfies the following conditions:
\begin{trivlist}
\item{\A{D${}_H $}}
For any $(t, \mu) \in \rr_+\times \pp$, the mapping $x\mapsto \sigma(t, x, \mu) $ is bounded and $2\eta $-H\"older continuous, for some $\eta \in (0,1/2] $, uniformly in $(t,\mu) \in \rr_+\times \pp $. Namely, there exists $C\ge 1$ s.t., for all $(t, \mu)  \in \rr_+\times \pp$:
\begin{equation}\label{DIFF_HOLDER_STABLE}
\|\sigma(t ,\cdot,\mu)\|_{{\mathcal C}^{2\eta}}\le C,
\end{equation}

\noindent where ${\mathcal C}^{2\eta}(\R^d,\R^m):=\{\varphi\in {\mathcal C}(\R^d,\R^m): \|\varphi\|_\infty+\sup_{x\neq y}\frac{|\varphi(x)-\varphi(y)|}{|x-y|^{2\eta}}<+\infty \} $ with $m\in \{1,d\times d\} $ denotes the usual H\"older space. When there is no ambiguity, as in \eqref{DIFF_HOLDER_STABLE}, we simply denote ${\mathcal C}^{2\eta}:= {\mathcal C}^{2\eta}(\R^d,\R^m)$ to ease the reading.
 
For any $(t, x) \in \rr_+\times \rr^d$, the continuous map $\pp \ni m \mapsto \sigma(t, x, m) $ admits a bounded and continuous flat derivative denoted by the map $[\delta \sigma/\delta m](t, x, m)(.)$ such that $(x, y) \mapsto [\delta \sigma/\delta m](t, x, m)(y)$ is $2\eta$-H\"older continuous, uniformly w.r.t the variables $t$ and $m$.

We suppose that the drift coefficient $b$ fulfills the following conditions:
\begin{trivlist}
\item[\A{B${}_H $}] For any $(t,\mu)\in \R^+ \times \pp $, the mapping $x\mapsto b( t ,x,\mu) $ belongs to the homogeneous H\"older space $$\dot {\mathcal C}^{2\eta}(\R^d,\R^d):=\{\varphi \in  {\mathcal C}(\R^d, \R^d): \ \sup_{x\neq y}\frac{|\varphi(x)-\varphi(y)|}{|x-y|^{2\eta}} <+\infty \} ,\ \eta \in (0,\frac12]$$ uniformly w.r.t. the variables $(t, \mu)$. In particular, the drift $b$ may be unbounded in space.

For any $(t, x) \in \rr_+\times \rr^d$, the continuous map $\pp \ni m \mapsto b(t, x, m) $ admits a bounded and continuous flat derivative denoted by the map $[\delta b/\delta m](t, x, m)(.)$ such that $(x, y) \mapsto [\delta b/\delta m](t, x, m)(y)$ is $2\eta$-H\"older continuous, uniformly w.r.t the variables $t$ and $m$.
 \end{trivlist}

We eventually assume that the following condition, linking the spatial H\"older exponent $2\eta $ in assumptions \A{D${}_H$} and  \A{B${}_H$} and the stability index $\alpha $ is fulfilled:
\begin{trivlist}
\item[\A{L}] It holds that:
$$2\eta+\alpha>1. $$
\end{trivlist}

We importantly point out that the former condition appearing in \A{B${}_H$} is the natural constraint arising in \cite{chau:meno:prio:19} to derive Schauder estimates for a general, and potentially singular, non-degenerate spherical measure $\omega $ in the sense of \eqref{NON_DEG_STABLE} in the super-critical case.


We will say that assumption \A{A${}_{S}$} is satisfied if assumptions \A{ND}, \A{AC}, \A{UE}, \A{D${}_H $}, \A{B${}_H $} and \A{L} hold.

\bigskip

Before going further, we first derive an important control that will play a key role in our analysis of the well-posedness of the non-linear martingale problem related to the SDE \eqref{SDE:MCKEAN}. Under the regularity assumptions \A{D${}_H $} and \A{B${}_H $}, denote by $U$ one of the coefficients $\sigma$ or $b$ and  introduce the map $x \mapsto \delta U\big(t, x, \mu,  \nu \big) := U\big(t, x, \mu \big) - U\big(t, x, \nu \big)$, for any fixed $\mu,\, \nu \in \mathcal{P}(\R^d)$. From \eqref{taylor:exp:mes} and the very definition \eqref{DIST_CONT_MEAS} of $\db_{2\eta}$, one readily gets 
\begin{equation}
\forall \beta \in [0,1], \quad |\delta U\big(t, ., \mu, \nu \big)|_\infty \leq \sup_{t, m}\|\frac{\delta }{\delta m } U(t, ., m)\|_{\mathcal{C}^{2\eta}} \db_{2\eta}(\mu, \nu) \leq \sup_{t, m}\|\frac{\delta }{\delta m } U(t, ., m)\|_{\mathcal{C}^{2\eta}} \db_{(1-\beta) 2\eta}(\mu, \nu). \label{uniform:bound:diff:mes}
\end{equation}

Similarly,   
\begin{align*}
 \delta U\big(t, x, \mu, \nu\big) -   \delta U\big(t, x', \mu, \nu \big) & = \int_0^1\int_{\rr^d} \Big[\frac{\delta}{\delta m}U(t, x, \mu_\lambda) - \frac{\delta}{\delta m}U(t, x', \mu_\lambda)   \Big](y) (\mu-\nu)(dy) \, d\lambda
\end{align*}

\noindent where we introduced the notation $\mu_\lambda:= \lambda \mu + (1-\lambda) \nu$ for $\lambda \in [0,1]$. Using the fact that $(x, y) \mapsto \frac{\delta}{\delta m}U(t, x, m)(y)$ is $2\eta$-H\"older uniformly w.r.t. the variables $t$ and $m$, for any $\beta \in [0,1]$, we obtain
\begin{align*}
\Big| \Big[\frac{\delta}{\delta m}U(t, x, \mu_\lambda) -&  \frac{\delta}{\delta m}U(t, x', \mu_\lambda)   \Big](y) - \Big[\frac{\delta}{\delta m}U(t, x, \mu_\lambda) - \frac{\delta}{\delta m}U(t, x', \mu_\lambda)   \Big](y') \Big| \\
& \leq 2 \sup_{t, m}\|\frac{\delta}{\delta m}U(t, ., m)(.)\|_{\mathcal{C}^{2\eta}} |x-x'|^{2\eta} \wedge |y-y'|^{2\eta}\\
& \leq 2 \sup_{t, m}\|\frac{\delta}{\delta m}U(t, ., m)(.)\|_{\mathcal{C}^{2\eta}} |x-x'|^{\beta 2 \eta} |y-y'|^{(1-\beta)2\eta}.
\end{align*}

\noindent Now, from the boundedness of $ \frac{\delta}{\delta m}U$ and the uniform $2\eta$-H\"older regularity of $x\mapsto \frac{\delta}{\delta m}U(t, x, \mu)$, we similarly get
$$
|\big[\frac{\delta}{\delta m}U(t, x, \mu_\lambda) -  \frac{\delta}{\delta m}U(t, x', \mu_\lambda)\big](.)|_\infty \leq2 \sup_{t, m}\|\frac{\delta}{\delta m}U(t, ., m)(.)\|_{\mathcal{C}^{2\eta}}  |x-x'|^{\beta 2\eta} 
$$

\noindent for any $\beta \in [0,1]$. From the preceding estimates, one thus deduces
\begin{equation}
\forall \beta \in [0,1], \, \|\delta U\big(t, ., \mu, \nu \big)\|_{{\mathcal C}^{\beta 2\eta}} \leq 2 \sup_{t, m}\|\frac{\delta}{\delta m}U(t, ., m)(.)\|_{\mathcal{C}^{2\eta}} \bd_{ (1-\beta) 2\eta}(\mu, \nu) \label{holder:reg:diff:mes:b:or:sigma_stable}. 
\end{equation}
\end{trivlist}

Under \A{A${}_S $},  for any fixed  measure flow  ${\mathbf Q}$ in $\mathcal{C}([0,\infty), \mathcal{P}(\rr^d))$, with time marginal ${\mathbf Q}(t)$, and any initial conditions $(s, \mu) \in [0, \infty) \times \mathcal{P}(\rr^d)$, there exists a unique solution to the linearized martingale problem, that is, the martingale problem associated with the operator $\big(\mathscr{A}^{\mathbf{Q}}_t\equiv \mathscr{A}(t, \cdot, {\mathbf Q}; D_x)\big)_{t\ge s}$ introduced in \eqref{THE_GEN_STABLE} is well-posed. We can refer to \cite{chau:meno:prio:19} which gives the corresponding Schauder estimates which in turn allows to derive the aforementioned well-posedness following the procedure described in e.g. Mikulevicus and Pragarauskas \cite{miku:prag:14} or Priola \cite{prio:12}. 

Moreover,  denoting by $\P^{\bf{Q}}_{s,x}$  the unique solution to the martingale problem starting from $\delta_x$ at time $s$, we have that the associated canonical process is strong Markov.  The corresponding time-inhomogeneous Markov semigroup will be denoted by $(P^{\bQ}_{s,t})_{t\ge s} $.\\

\begin{theorem}[\textbf{Well-posedness of the non-linear martingale problem}] \label{THM_WP_NL}
Under assumption {\bf(A${}_S $)}, for any $s\geq0$ and any initial distribution $\mu\in {\mathcal P}({\R^d})$, the non-linear martingale problem starting from $\mu$ at time $s$ is well-posed for any $\eta \in (0,1/2] $ if $\alpha\ge 1 $ and under the condition $\alpha>2\eta \vee (1-\eta )$ if $\alpha<1 $. 
\end{theorem}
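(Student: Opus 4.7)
The natural strategy is to recast the non-linear martingale problem as a fixed point problem on the complete metric space $(\mathscr{\bA}_{s,t,\mu}, \db_{2\eta, s, t})$ introduced in \eqref{space:prob:mes}. For any $\bQ\in \mathscr{\bA}_{s,t,\mu}$, the well-posedness of the linearised martingale problem associated with $(\mathscr{A}^{\bQ}_r)_{r\ge s}$ recalled just before the statement allows one to define consistently a map $\Phi:\mathscr{\bA}_{s,t,\mu}\to \mathscr{\bA}_{s,t,\mu}$ by letting $\Phi(\bQ)(r)$ be the time-$r$ marginal of the unique probability measure on the Skorokhod space solving the linearised problem with initial law $\mu$ at time $s$. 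A measure flow $\bP$ solves the non-linear martingale problem if and only if it is a fixed point of $\Phi$. Hence it suffices to prove that, for some horizon $\tau>0$ depending only on the structural constants of \A{A${}_{S}$}, the map $\Phi$ is a strict contraction on $(\mathscr{\bA}_{s,s+\tau,\mu},\db_{2\eta,s,s+\tau})$ for every $\mu$; a standard concatenation argument will then extend the unique fixed point to any finite horizon $[s,T]$.

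The contraction estimate is the heart of the argument. Writing $(P^{\bQ}_{u,r})_{u\le r}$ for the inhomogeneous Markov semigroup of the linearised problem, for any smooth test $f$ with $\|f\|_{\mathcal{C}^{2\eta}}\le 1$, differentiating $u\mapsto P^{\bQ}_{s,u}P^{\bQ'}_{u,r}f$ in $u$ yields the Duhamel identity
$$
P^{\bQ}_{s,r}f - P^{\bQ'}_{s,r}f \;=\; \int_s^r P^{\bQ}_{s,u}\big(\mathscr{A}^{\bQ}_u - \mathscr{A}^{\bQ'}_u\big) P^{\bQ'}_{u,r}f\, du.
$$
The difference of generators splits into a drift part, controlled by $\|\delta b(u,\cdot,\bQ(u),\bQ'(u))\|_\infty$ times $\|\nabla P^{\bQ'}_{u,r}f\|_\infty$, and a non-local part governed by a suitable H\"older seminorm of $\delta\sigma(u,\cdot,\bQ(u),\bQ'(u))$ acting on H\"older norms of $P^{\bQ'}_{u,r}f$ of order up to $2\eta+\alpha$. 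The sensitivity estimates \eqref{uniform:bound:diff:mes} and \eqref{holder:reg:diff:mes:b:or:sigma_stable} translate both of these coefficient quantities into bounds proportional to $\db_{2\eta}(\bQ(u),\bQ'(u))$, while the Schauder-type estimates of \cite{chau:meno:prio:19} revisited in Section \ref{PROOF_GROSSE_PROP} control the H\"older norms of $P^{\bQ'}_{u,r}f$ by an integrable singularity of the form $(r-u)^{-\vartheta}$ with $\vartheta<1$ under \A{L}. Integrating in $u\in[s,r]$ against $\mu$ and passing to the supremum over $f$ and over $r\in[s,t]$, one obtains
$$
\db_{2\eta,s,t}\big(\Phi(\bQ),\Phi(\bQ')\big) \;\le\; C(t-s)^{\gamma}\,\db_{2\eta,s,t}(\bQ,\bQ'),\qquad \gamma>0,
$$
which is a strict contraction once $t-s$ is small enough, and with a constant $C$ independent of the starting marginal, enabling iteration.

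The principal obstacle lies in calibrating the H\"older exponents appearing in the coefficient sensitivities against the time singularities produced by the Schauder regularisation in the super-critical regime $\alpha<1$. Since \A{B${}_{H}$} allows $b$ to be unbounded in space, the naive pairing $\|\delta b\|_\infty\|\nabla g\|_\infty$ is not directly available in that regime: one must instead tilt the pairing toward higher-order H\"older seminorms of $g=P^{\bQ'}_{u,r}f$, which forces a more singular time factor from the Schauder bounds. The interpolated estimate \eqref{holder:reg:diff:mes:b:or:sigma_stable} offers exactly the required flexibility in choosing $\beta\in[0,1]$, trading regularity of $\delta b$ and $\delta\sigma$ against the order of the H\"older norm of $\mu\mapsto\db_{(1-\beta)2\eta}(\mu,\nu)$ one pays for; the requirement that the resulting time integral remains integrable produces exactly the condition $\alpha>2\eta\vee(1-\eta)$ in the super-critical case. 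When $\alpha\ge1$ the drift is effectively a lower-order perturbation of the non-local part, so any $\eta\in(0,1/2]$ suffices and no extra restriction beyond \A{L} is needed.
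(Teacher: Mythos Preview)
Your overall architecture is the paper's: define the Picard map $\Phi$ (the paper calls it $\mathscr{\mathbf T}$) on the complete space $\mathscr{\bA}_{s,t,\mu}$ via the well-posed linearised problem, prove a contraction via a Duhamel-type expansion of $P^{\bQ}_{s,r}-P^{\bQ'}_{s,r}$, and iterate in time. The paper carries out exactly this, with the Duhamel expansion realised through the PDE \eqref{PDE_W} for $w=u_1^\delta-u_2^\delta$ and a parametrix around a frozen proxy along the flow \eqref{DEF_FLOW}.

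One point you misdiagnose: the unboundedness of $b$ is \emph{not} the obstruction to the pairing $\|\delta b\|_\infty\,\|\nabla g\|_\infty$. By \eqref{uniform:bound:diff:mes} the difference $\delta b(u,\cdot,\bQ(u),\bQ'(u))$ is uniformly bounded by $C\,\db_{2\eta}(\bQ(u),\bQ'(u))$ regardless of whether $b$ itself is bounded, so that pairing is perfectly available and, combined with $\|\nabla P^{\bQ'}_{u,r}f\|_\infty\le C(r-u)^{-(1-\gamma)/\alpha}$ from Lemma~\ref{CHARACTERIZATION_OF_GRADIENT_EXPLOSION_AND_HOLDER_MODULUS}, is already integrable under \A{L}. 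The unboundedness of $b$ matters elsewhere: it forces the freezing along the flow $\theta_{v,\tau}(\xi)$ in the proxy \eqref{DEF_OPERATEUR_GELE} and is the source of the constraint $\alpha>2\eta$ (see the discussion after \eqref{DEF_PSI} and Remark~\ref{remarks:possible:improvements}).

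The genuine gap in your plan is the claim that the interpolation \eqref{holder:reg:diff:mes:b:or:sigma_stable} delivers the condition $\alpha>1-\eta$ in a \emph{single} contraction step. When you invoke \eqref{holder:reg:diff:mes:b:or:sigma_stable} with $\beta>0$ to gain the spatial H\"older factor $|x-x'|^{2\beta\eta}$ (which is what the cancellation argument for the gradient in the super-critical case needs, see \eqref{CTR_H_BSUP_HOLDER_alpha_LE_1_WITH_ALL_APP}), the output is $\db_{(1-\beta)2\eta}$, a \emph{coarser} metric than $\db_{2\eta}$. So the estimate you obtain is not self-closing in $\db_{2\eta}$; it has the form of \eqref{inequality:first:iteration:T:operator}, mixing $\db_{2\eta}$ and $\db_{(1-\lambda)2\eta}$ on the right. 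The paper's direct one-step argument (Proposition~\ref{PROP_STAB}, carried out in $\db_\eta$ with $\lambda=1/2$) only reaches $\alpha>2\eta\vee(1-\tfrac{\eta}{2})$; to descend to $\alpha>2\eta\vee(1-\eta)$ the paper must iterate and prove that $\mathscr{\mathbf T}^2$ is a contraction in a suitably chosen $\db_{(1-\lambda)2\eta}$, as done in Appendix~\ref{sec:modification:proof}. Your sketch omits this second iteration, without which the stated threshold cannot be obtained.
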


Observe that, in the super-critical case $\alpha<1 $ the lowest possible stable index $\alpha $ we can attain is strictly greater than $2/3$.

\begin{corollary}[Strong uniqueness]\label{COROL_STRONG}
 Under \A{A${}_S $}, if $2\eta\ge 1-\frac \alpha 2 $, then equation \eqref{SDE:MCKEAN} admits a unique strong solution for any  $\alpha\ge 1 $ and   when $\alpha<1 $ under the additional condition $\alpha> 2\eta \vee (1- \eta )$.
\end{corollary}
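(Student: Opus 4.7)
}

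The plan is to combine the weak well-posedness provided by Theorem \ref{THM_WP_NL} with a Yamada--Watanabe style argument: once the flow of time-marginals of the non-linear SDE is uniquely determined, strong uniqueness reduces to pathwise uniqueness for a standard (linear) stable driven SDE with frozen measure coefficients, to which known results apply under $2\eta \ge 1-\alpha/2$.

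\textbf{Step 1: freezing the measure argument.} By Theorem \ref{THM_WP_NL}, the non-linear martingale problem is well-posed under \A{A${}_S$} (with the usual restrictions on $\alpha$ and $\eta$ in the super-critical regime). Denote by $\P^*$ the unique solution with $\P^*(s)=\mu$ and by $\bmu^*(t):=\P^*(t)$ the associated flow of marginals, which is therefore uniquely determined. Now define the frozen coefficients
\begin{equation*}
\tilde b(t,x):=b(t,x,\bmu^*(t)),\qquad \tilde\sigma(t,x):=\sigma(t,x,\bmu^*(t)),
\end{equation*}
and consider the linear SDE
\begin{equation*}
Y_t=\xi+\int_s^t\tilde b(r,Y_r)\,dr+\int_s^t\tilde\sigma(r,Y_{r^-})\,dZ_r.
\end{equation*}
Under \A{D${}_H$}--\A{B${}_H$}, the maps $\tilde\sigma$ and $\tilde b$ inherit a uniform $2\eta$-H\"older regularity in space, with $\tilde\sigma$ bounded and non-degenerate thanks to \A{UE}, while $\tilde b$ may be unbounded.

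\textbf{Step 2: pathwise uniqueness for the linearized SDE.} The condition $2\eta\ge 1-\alpha/2 $ is precisely the Komatsu--Tanaka--Priola threshold ensuring pathwise uniqueness for symmetric $\alpha$-stable driven SDEs with a H\"older diffusion coefficient; in the form proved, say, in Priola \cite{prio:18} (and its extensions to a general non-degenerate spherical measure $\omega$ satisfying \A{ND}), pathwise uniqueness holds for any bounded non-degenerate $2\eta$-H\"older $\tilde\sigma$ and any bounded $2\eta$-H\"older drift. When $\alpha<1$, the extra requirement $\alpha>2\eta\vee(1-\eta)$ allows to incorporate an \emph{unbounded} H\"older drift $\tilde b$ as a perturbation of the pure-jump generator, exactly in the super-critical parametrix framework of \cite{chau:meno:prio:19}: the drift term fits into the Duhamel expansion with an integrable time singularity, and the Zvonkin-type argument used to remove the drift and prove pathwise uniqueness still goes through. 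This yields pathwise uniqueness for the frozen SDE above.

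\textbf{Step 3: strong existence via Yamada--Watanabe.} Weak existence for the frozen SDE is given by Theorem \ref{THM_WP_NL} (the canonical process under $\P^*$ realizes a weak solution). Combined with the pathwise uniqueness of Step 2, the classical Yamada--Watanabe theorem (valid on the Skorokhod space in the c\`adl\`ag setting, see e.g. Kurtz's formulation) yields existence and uniqueness of a strong solution $Y$ to the frozen SDE on any filtered probability space carrying $\xi$ and $Z$.

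\textbf{Step 4: back to the non-linear SDE.} Any strong solution $X$ of \eqref{SDE:MCKEAN} induces a law on the Skorokhod space which solves the non-linear martingale problem, hence its marginals must coincide with $\bmu^*$ by the weak uniqueness of Step 1. Thus every strong solution of \eqref{SDE:MCKEAN} is a strong solution of the frozen SDE with coefficients $(\tilde b,\tilde\sigma)$, and conversely the strong solution $Y$ of that frozen SDE has marginals $\bmu^*$ (since $[\,Y_t]=\P^*(t)$ by Step 1) and therefore solves \eqref{SDE:MCKEAN}. Strong existence and pathwise uniqueness for \eqref{SDE:MCKEAN} follow.

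\textbf{Main obstacle.} The delicate point is Step 2 in the super-critical case $\alpha<1$ with an unbounded drift: one needs a linear pathwise uniqueness result for stable SDEs that simultaneously accommodates a merely $2\eta$-H\"older, unbounded drift and a general, possibly singular, spherical measure $\omega$. This is precisely where the assumption $\alpha>2\eta\vee(1-\eta)$ enters, ensuring that the forward parametrix expansion produces a drift-removal transform with enough regularity to close the Zvonkin argument; everything else is a standard application of Yamada--Watanabe given the weak well-posedness provided by Theorem \ref{THM_WP_NL}.
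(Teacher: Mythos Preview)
Your proposal is correct and follows the same strategy as the paper: use Theorem \ref{THM_WP_NL} to freeze the law, reduce the non-linear SDE to a linear time-inhomogeneous stable SDE, and then invoke known strong well-posedness results for that linear equation (the paper cites \cite{chen:zhan:zhao:17} for bounded coefficients and \cite{prio:12} together with the Schauder estimates of \cite{chau:meno:prio:19} for the unbounded case, and does not spell out the Yamada--Watanabe reduction that you make explicit, but the logic is identical). One minor imprecision: the super-critical constraint $\alpha>2\eta\vee(1-\eta)$ is inherited from Theorem \ref{THM_WP_NL} itself (your Step~1), not from the Zvonkin argument in Step~2 as your closing paragraph suggests.
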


\begin{proof}[Proof of Theorem \ref{THM_WP_NL}] Let $T>s$ be fixed and such that $T-s $ is \textit{small}. As already mentioned, the central idea is to use the Banach fixed point theorem on the complete metric space $\mathcal{\boldsymbol A}_{s, T, \mu}$ equipped with  $\bd$.

We consider the map $\mathscr{\boldsymbol T}: \mathscr{\boldsymbol A}_{s,T, \mu} \rightarrow \mathscr{\boldsymbol A}_{s, T, \mu}$ which to the measure flow ${\mathbf Q} \in \mathscr{\boldsymbol A}_{s,T, \mu}$ associates the measure flow $\mathscr{\boldsymbol T}({\mathbf Q}) \in \mathscr{\boldsymbol A}_{s,T, \mu}$ induced by the probability measure $\P_s^{{\mathbf Q}}$ given by the unique solution to the linear martingale problem associated with the operator $\big(\mathscr{A}(t,\cdot ,{\mathbf Q};  D_x\big)$ given by \eqref{THE_GEN_STABLE} starting from the initial distribution $\mu$ at time $s$. Namely, one has $\big(\mathscr{\boldsymbol T}({\mathbf Q})\big)(t) = [X^{\mathbf{Q}}_t]$, for any $t\in [s,T]$, where $(X^{\mathbf{Q}}_t)_{t\in [s,T]}$ is the unique weak solution to the linear SDE with dynamics
$$
X^{\mathbf{Q}}_t = \xi + \int_s^t b(r,X^{\mathbf{Q}}_r, \mathbf{Q}(r)) dr +\int_s^t  \sigma(r,X^{\mathbf{Q}}_{r^-}, \mathbf{Q}(r)) dZ_s,\  [\xi]=\mu.
$$
In particular, one has ${\boldsymbol T}(\bQ)(s)=\mu $
and
\begin{equation}
 \int h(y) \mathscr{\boldsymbol T}({\mathbf Q})(t)(dy) = \int P_{s, t}^{\bQ}h( y) \mu(dy ) ,  \label{THE_CORRESP_ITERATION}
\end{equation}

\noindent recalling that $(P^{\bQ}_{s, t})_{t\geq s}$ denotes the strong Markov semigroup associated with the operator $\big(\mathscr{A}(t, \cdot,{\mathbf Q}; D_x)\big)_{t\ge s}$.


Let now ${\mathbf P}_i, i=1,2 $, be two measure flows on $\mathscr{\boldsymbol A}_{s, T, \mu} $. Write from \eqref{THE_CORRESP_ITERATION} and \eqref{DIST_CONT_MEAS_FLOWS}:
\begin{eqnarray}
\db_{\eta, s,T}\big({\mathbf T}({\mathbf P}_1),{\mathbf T}({\mathbf P}_2)\big)&=&\sup_{t\in [s,T]} \sup_{h: 
\|h\|_{{\mathcal C}^\eta}\leq 1}\int h(y) \big({\mathbf T}({\mathbf P}_1)(t)-{\mathbf T}({\mathbf P}_2)(t)\big)(dy)\nonumber\\
&=&\sup_{t\in [s,T]} \sup_{h: 
\|h\|_{{\mathcal C}^\eta}\leq 1}\int  \Big( P_{s,t}^{\bP_1}-P_{s,t}^{\bP_2} \Big)h( y) \mu(dy ).\label{TO_CONCLUDE_FIXED_POINT}
\end{eqnarray}

 Let us now write for notational convenience $P_{s,t}^{\bP_i}\equiv P_{s,t}^{i}, \ t\in [s,T] $
 for $i=1, 2$. 
The following Proposition, whose proof is postponed to Section \ref{PROOF_GROSSE_PROP} is the key
to prove that $ {\mathbf T}$ generates a contraction on $\mathscr{\boldsymbol A}_{s, T, \mu} $ in small time.

\begin{prop}[Sensitivity Analysis of the semi-groups w.r.t. the measure argument]\label{PROP_STAB}
Under \A{A${}_S $} and the additional assumption $\alpha \geq 2\eta \vee (1- \frac{\eta}{2})$ if $\alpha< 1 $, for any fixed $T>0$, there exists a constant $C:=C(T, \A{A{}_S}, \alpha, \eta)\ge 1$ ($T\mapsto C(T, \A{A{}_S}, \alpha, \eta)$ being non-decreasing) and an exponent $\tilde \zeta>0 $ s.t. for all $h\in {\mathcal C}^{\eta}(\R^d,\R),\ \|h\|_{{\mathcal C}^\eta}\le 1 $ and for any $(t,x)\in [s,T]\times \R^d $:
\begin{equation}\label{THE_EQ_SENSI_SG_MES}
|(P^{1}_{s, t} - P^{2}_{s, t}) h(x)|\le C(t-s)^{\tilde \zeta}\bd_{\eta,s,t}(\bP_1,\bP_2).
\end{equation}
\end{prop}

\noindent From \eqref{THE_EQ_SENSI_SG_MES} and \eqref{TO_CONCLUDE_FIXED_POINT}, recalling as well from \eqref{DIST_CONT_MEAS_FLOWS} that $[s, T] \in t\mapsto \bd_{\eta,s,t}(\bP_1,\bP_2) $ is a non-decreasing function of $t$, we eventually derive:
$$
\bd_{\eta, s, T}(\mathscr{\mathbf T}(\bP_1),\mathscr{\mathbf T}(\bP_2)) \leq C(T-s)^{\tilde \zeta}  \bd_{\eta, s, T}(\bP_1,\bP_2).
$$

Hence, we see that for $T$ sufficiently close to $s$, so that  $C(T-s)^{\tilde \zeta} < 1$, the map $\mathscr{\mathbf T}$ is a contraction on the complete metric space $\mathcal{\boldsymbol A}_{s, T, \mu}$. It thus admits a unique fixed point that we denote by $\bP^{0}$. For $n\geq1$, if $\bP^{n-1}$ is constructed, denote by $\bP^{n}$ the unique fixed-point of $\mathscr{\mathbf T}$ on $\mathcal{\boldsymbol A}_{nT, (n+1)T, \bP^{n-1}(nT)}$. Now set $\bP(t) := \bP^{n}(t)$ if $t\in [nT,(n+1)T]$ and denote by ${\mathbb P}_s^{\bP}$ the unique solution to the linear martingale problem starting from the distribution $\mu$ at time $s$ in which the measure dependence is given by $\bP$. By the characterization of the martingale problem, it is readily seen that for any integer $n$ and any $t\in [nT, (n+1)T]$, one has $({\mathbb P_s^{\bP}})^{-1}\circ y_{ t}^{\bP} = \bP^{n}(t) = \bP(t)$. By the uniqueness of the fixed point, it turns out that $\P_s:={\mathbb P}_s^{\bP}$ is also the unique solution to the non-linear martingale problem. This completes the proof.

\end{proof}

At this step, we importantly emphasize that Proposition \ref{PROP_STAB} allows to conclude that Theorem \ref{THM_WP_NL} is valid under the additional stronger condition $\alpha > 2\eta \vee (1-\frac{\eta}{2})$ which appears to be quite stringent in the supercritical case $\alpha <1$. Indeed, in this context, the minimal attainable stability index $\alpha$ is strictly greater than $4/5$. In order to establish the well-posedness of the non-linear martingale problem under the sole condition $\alpha > 2\eta \vee (1-\eta)$, one has to suitably modify the above argument by performing the fixed point argument using the map $\mathbf{T}^2 = \mathbf{T} \circ \mathbf{T}$ still on the complete metric space $\mathcal{\boldsymbol A}_{s, T, \mu}$ equipped with the distance $\db_{\beta, s, T}$ for some well-chosen $\beta \in (0,2\eta)$. For the sake of clarity, the main modifications of the proof of Proposition \ref{PROP_STAB} needed to establish that the iterated procedure provides a contraction are presented in Appendix \ref{sec:modification:proof}.

\begin{proof}[Proof of Corollary \ref{COROL_STRONG}]
Theorem \ref{THM_WP_NL} provides weak uniqueness for the McKean-Vlasov SDE \eqref{SDE:MCKEAN}. The dependence in the law can then classically be viewed as an inhomogeneous time dependence (see e.g.  \cite{chau:frik:18}). Namely,  \eqref{SDE:MCKEAN} can be rewritten as:
$$X_t=\xi+\int_0^t \widetilde b(s,X_s) ds+\int_0^t \widetilde \sigma (s,X_{s^-}) dZ_s, $$
where $\widetilde b(s,X_s)=b(s,X_s, [X_s]),\ \widetilde \sigma(s,X_s)=\sigma(s,X_s, [X_s]) $. To derive strong well-posedness, we can therefore now rely on the classical inhomogeneous linear setting.

If the coefficients are bounded, the result then readily follows from Chen \textit{et al.} \cite{chen:zhan:zhao:17} who proved the complete strong well-posedness for stable driven SDEs, including the super-critical case up to $\alpha \in (0,2) $.
It can also be easily checked that the procedure developed in the homogeneous case by \cite{prio:12} for $\alpha\ge 1 $ can readily be extended to the current time dependent framework. The methodology therein indeed only depends on \textit{a priori} smoothness controls that have been established under the current assumptions in \cite{chau:meno:prio:19}. Hence, the methodology of \cite{prio:12} to derive strong uniqueness still applies and extends to the current framework with unbounded coefficients and some super-critical cases. 
\end{proof}

\newpage
\begin{remark}[On possible extensions]\label{remarks:possible:improvements}
Before going to the proofs of our main results, let us discuss some possible extensions of our main existence and uniqueness results.
\begin{itemize}
\item[$\bullet$] Let us first indicate that the global spatial H\"older continuity of the drift assumed in \A{B${}_H $} could be weakened to a \textit{local} H\"older continuity condition. Namely,  we could only impose that there exists $K_0$ s.t. uniformly in $(t,\mu)\in \R_+\times \mathcal P(\R^d) $,
$$|b(t, x,\mu)-b(t, y,\mu)|\le K_0|x-y|^{2\eta} , |x-y|\le 1.$$ 
This is precisely the condition appearing in \cite{chau:meno:prio:19}. The drawback of considering the above condition is that it would induce a localization of our arguments and therefore additional technicality. We preferred to avoid it to focus on the specific non-linear aspects. Anyhow, such an assumption would allow to consider drifts which have linear growth in space, like e.g. in the work by Mishura and Veretennikov \cite{mishura:veretenikov}.   
\item[$\bullet$] In the subcritical case $\alpha \in (1,2)$, we believe that the approach of \cite{chau:frik:18} could be adapted  in order to consider for the drift term a Lipschitz continuity condition w.r.t the total variation metric on the space of probability measures uniformly w.r.t. the time and space variables. In this context, one still has to perform the above fixed point procedure on the space $\mathscr{\boldsymbol A}_{s,T, \mu}$ now equipped with the total variation distance. This will concern future investigations.
\item[$\bullet$] Let us point out that the condition $\alpha > 2\eta$ appears in Theorem \ref{THM_WP_NL} only for integrability purposes in order to establish our Schauder estimates in the context of unbounded drift coefficients. In particular, it could also be removed by employing a localization technique\footnote{which would actually be the same that the one allowing to consider the local H\"older condition for the drift described in the first point} in the same spirit of the one employed in \cite{chau:meno:prio:19} or by assuming that the drift coefficient is globally bounded. 
\item[$\bullet$] Even though we only addressed the pure jump case, the methodology developed below (relying mainly on the control of the explosions of the gradient of the solution of a PDE with H\"older terminal condition and coefficients) would also apply in the diffusive stable setting corresponding to $\alpha=2 $. Coupled to the previously mentioned localization procedure, this could allow to consider non-linear drifts, locally H\"older continuous in space and with spatial linear growth as well as a diffusion coefficient also depending  on the measure argument.
\end{itemize}
\end{remark}

\section{Sensitivity of the semi-groups w.r.t. the measure argument}
\label{PROOF_GROSSE_PROP}
The main purpose of this section is to prove Proposition \ref{PROP_STAB}. To derive the key control \eqref{THE_EQ_SENSI_SG_MES} the main point is formally to expand one semi-group around the other and to exploit some Schauder like controls in the same spirit of those established in \cite{chau:meno:prio:19} under the current assumption \A{A${}_S $}. With the previous notations, defining for any fixed  $t\in [0,T] $ with $T\leq1$ small enough to be specified later on, $ (s,x)\in [0,t]\times \R^d $ and $ i\in \{1,2\}$, $u_i(s,x):= P_{s,t}^{\bP_i}h(x)$ it is clear that $u_i$ is a mild solution of equation:
\begin{equation}\label{EDP_U_I}
\begin{cases}
\Big(\partial_s +\mathscr{A}_s^{\bP_i}\Big)u_i(s, x) =0,\ (s, x)\in [0,t)\times \R^d,\\
u_i(t,x)=h(x),\ x\in \R^d.
\end{cases}
\end{equation}
with $\mathscr{A}_s^{\bP_i} $ as in \eqref{THE_GEN_STABLE} and $h\in \mathcal{C}^\gamma(\rr^d,\rr)$ where $\gamma \in (0,1)$ corresponds to the index of the underlying distance employed for the fixed point procedure described in the proof of Theorem \ref{THM_WP_NL}. Namely, for the proof of Proposition \ref{PROP_STAB}, that is, under the assumption $\alpha > 2\eta \vee (1-\frac{\eta}{2})$, we will take $\gamma=\eta$. This choice notably allows to derive the announced result in the subcritical case $\alpha \geq1$ and in the super-critical one up to $\alpha>\frac 45 $. In order to handle the supercritical case $\alpha <1$ under the sole condition $\alpha >2\eta \vee (1-\eta)$, we will take $\gamma =2\eta$ for the first iterate of the map $\bold{T}$ and then $\gamma=(1-\lambda)2\eta$ for a suitable choice of $\lambda \in [0,1]$ for the second iterate of $\bold{T}$. The main required modification for the latter procedure is precisely described in Appendix \ref{sec:modification:proof}. 

It now follows from the Schauder estimates established in the above reference that, if $h\in C^{\alpha+2\eta}(\R^d,\R) $ then $u_i\in L^\infty([0,t],C^{\alpha+2\eta}(\R^d,\R)) $. We carefully indicate that the results in \cite{chau:meno:prio:19} are stated in the super-critical case $\alpha<1 $ but still hold for $\alpha\in [1,2) $ (See Remark 6 therein). Also, the estimates of \cite{chau:meno:prio:19} are established for a constant \textit{diffusion} coefficient $\sigma $. It is anyhow also indicated in Remark 5 of that work that those estimates would extend to dynamics with a variable H\"older in space diffusion coefficient following the usual perturbation approach in that case (see e.g. \cite{miku:prag:14} or \cite{zhan:zhao:18}). Hence, the estimates are still valid under our current assumption \A{A${}_S $}. Furthermore, the techniques we develop below also emphasize how the dependence on the diffusion coefficient must be handled to derive Schauder estimates (see e.g. Lemmas \ref{NEW_LEMMA_R} and \ref{LEMME_SENSI_MES_ET_SPACE} below which precisely investigate the sensitivity of the non-local term with respect to a diffusion coefficient and the proof of Lemma \ref{CHARACTERIZATION_OF_GRADIENT_EXPLOSION_AND_HOLDER_MODULUS}).

The point now is that $h$ is here only assumed to be in $C^{\gamma}(\R^d,\R) $ (and not in $C^{\alpha+2\eta}(\R^d,\R) $). For the analysis let us now consider a mollification $h^\delta\in C^{\alpha+2\eta}(\R^d,\R) $ of $h \in C^{\gamma}(\R^d,\R)$, i.e. $h^\delta=h\star \rho_\delta $ with $\star $ standing for the convolution and for $\rho_\delta(\cdot)=\delta^{-d}\rho(\frac{\cdot}{\delta}) $ where $\rho:\R^d\rightarrow \R_+ $ is a smooth compactly supported function. In particular, there exists $C\ge 1$ s.t. $\|h^\delta\|_{{\mathcal C}^{{\gamma}}}\le C $ and $\|h^\delta-h\|_{{\mathcal C}^{{\gamma}}} \underset{\delta \rightarrow 0}{\longrightarrow } 0$. 
Denoting by $u_i^\delta(s,x):=P_{s,t}^{\bP_i}h^\delta(x) $, it is then clear from the uniform H\"older continuity of $h$ that $(u_i^\delta-u_i)(s, x)=P_{s, t}^{\bP_i}(h^\delta-h)(x)\underset{\delta \rightarrow 0}{\longrightarrow} 0 $, uniformly in $(s,x)\in [0,t]\times \R^d $. Hence, it suffices to establish \eqref{THE_EQ_SENSI_SG_MES} for the mollified final functions uniformly w.r.t. the smoothing parameter $\delta$.

To this end, observe that  $u_i^\delta(s,x)$ satisfies
\begin{equation}\label{EDP_U_I_DELTA}
\begin{cases}
\Big(\partial_s +\mathscr{A}_s^{\bP_i}\Big)u_i^\delta(s, x) =0,\ (s,x)\in [0,t)\times \R^d,\\
u_i^\delta(t,x)=h^\delta(x),\ x\in \R^d,
\end{cases}
\end{equation}

\noindent and $u_i^\delta\in L^\infty([0,t],C^{\alpha+\gamma}(\R^d,\R)) $. 

To compare both semigroups, we now write the PDE satisfied by $w(s, x):=w^\delta(s, x)=(u_1^\delta-u_2^\delta)(s, x)$. Namely,
\begin{equation}
\label{PDE_W}
\left\{
\begin{split}
\Big(\partial_s +\mathscr{A}_s^{\bP_1})w(s,x)&=-\Big( \big[b(s,x,\bP_1)-b(s,x,\bP_2) \big]\cdot D_xu_2^\delta(s,x)\\
&\hspace*{.5cm}+(L_s^{\bP_1,\alpha}-L_s^{\bP_2,\alpha})u_2^\delta(s,x) \Big)=: -H_{\bP_1,\bP_2} u^{\delta}_2(s,x),\ (s,x)\in [0,t)\times \R^d,\\
w(t,x)&=0,\ x\in \R^d.
\end{split}
\right.
\end{equation}

The important point is now to observe that, proceeding this way we have isolated the terms with the difference of the reference measures in the source (term $H_{\bP_1,\bP_2} u^{\delta}_2$ in \eqref{PDE_W}). Hence, the above left hand side depends only on one reference measure, here $\bP_1 $, which will serve in order to introduce a proxy associated with the unbounded coefficients of $\mathscr{A}_s^{\bP_1} $ similarly to what was previously performed in \cite{chau:meno:prio:19} to derive quantitative estimates. Observe that, for the  PDE \eqref{EDP_U_I_DELTA} with mollified terminal condition, $Du_1^\delta$ and $\big(L_s^{\bP_i,\alpha}u_1^\delta\big)_{i\in \{1,2\}}$ are indeed well defined pointwise.

The following results will play a central role in our analysis. The first lemma controls the explosions of the gradient and its H\"older modulus of continuity for the solution of \eqref{EDP_U_I_DELTA}. The second one gives controls  of the source $H_{\bP_1,\bP_2} u^{\delta}_2 $ in \eqref{PDE_W} in terms of the distance between the two reference measures, up to multiplicative and integrable time singularities.

\begin{lem}[Regularity of the semi-groups for a H\"older source]\label{CHARACTERIZATION_OF_GRADIENT_EXPLOSION_AND_HOLDER_MODULUS}
Assume that $\alpha + \gamma>1$ and $\gamma \in (0,2\eta]$. With the notations of \eqref{EDP_U_I_DELTA}, there exists a constant $C\ge 1$ s.t. for any $\bP_i \in {\mathcal C}([0, t],{\mathcal P}(\R^d))$, $i=1,2$, for all $(s, x)\in [0,t)\times \R^d $ and all $\delta>0 $:
\begin{equation}\label{BD_CTR_U_I_DELTA_GRADIENT}
|u_i^\delta (s, x)|+(t-s)^{
\frac 1\alpha-\frac{\gamma}{\alpha}
}|D_xu_i^\delta(s, x)|\le C.
\end{equation}
Also, for all $\varepsilon\in (0,1-(\frac 1\alpha-\frac{\gamma}{\alpha})) $, there exists $C_\varepsilon$ s.t. for all $(s,x,x')\in [0,t)\times (\R^d)^2 $:
\begin{equation}
\label{BD_CTR_U_I_DELTA_MOD_HOLDER_GRADIENT}
(t-s)^{1-\varepsilon }\frac{|D_x u_i^\delta(s, x)-D_x u_i^\delta(s, x')|}{|x-x'|^{\vartheta}}\le C_\varepsilon,\ \vartheta:=\vartheta(\alpha,\gamma,\varepsilon)=\alpha\Big[1-\varepsilon-\big(\frac 1\alpha-\frac{\gamma}{\alpha}\big)\Big]=\alpha(1-\varepsilon)-1+\gamma.
\end{equation}
\end{lem}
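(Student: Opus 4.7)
\medskip

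\noindent\textbf{Plan of proof.} The strategy is to run the parametrix/proxy perturbation argument of \cite{chau:meno:prio:19} in the present mildly modified setting (with, crucially, a H\"older terminal datum of index $\gamma$ possibly smaller than $2\eta$, and with a variable H\"older diffusion coefficient). Fix a freezing point $(\tau,\xi)\in[0,t]\times\R^d$, a natural choice being $\tau=t$ and $\xi=\theta_{\tau,s}(x)$, where $\theta$ denotes the deterministic flow associated with the frozen drift $r\mapsto b(r,\cdot,\bP_i(r))$ starting from $x$ at time $s$ (the Hšlder regularity of $b$ in space ensures this flow exists and is well-behaved in our framework). Let $\widetilde{\mathscr{A}}^{(\tau,\xi),\bP_i}_r$ denote the frozen operator obtained by replacing $\sigma(r,\cdot,\bP_i(r))$ and $b(r,\cdot,\bP_i(r))$ by their values along this curve, and let $\widetilde p^{(\tau,\xi),\bP_i}_{s,r}$ be the associated (smooth) density of the frozen process, a stable convolution kernel whose standard heat-kernel type gradient and H\"older bounds are available. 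Writing the Duhamel/parametrix identity
\begin{equation*}
u_i^\delta(s,x)=\widetilde P^{(\tau,\xi),\bP_i}_{s,t}h^\delta(x)+\int_s^t \widetilde P^{(\tau,\xi),\bP_i}_{s,r}\bigl(\mathscr{A}^{\bP_i}_r-\widetilde{\mathscr{A}}^{(\tau,\xi),\bP_i}_r\bigr)u_i^\delta(r,\cdot)(x)\,dr,
\end{equation*}
the whole analysis then reduces to controlling the frozen heat-kernel term and the perturbative integral.

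For \eqref{BD_CTR_U_I_DELTA_GRADIENT}, I would first record the standard controls for the frozen semigroup: using \A{ND} and \A{UE} together with the smoothness of the stable density, $|D_x\widetilde P^{(\tau,\xi),\bP_i}_{s,r}\varphi(x)|\lesssim (r-s)^{(\gamma-1)/\alpha}\|\varphi\|_{\mathcal C^{\gamma}}$ for any $\varphi\in\mathcal C^\gamma(\R^d,\R)$ (exploit the cancellation $\int \partial_{x_j}\widetilde p^{(\tau,\xi),\bP_i}_{s,r}(x,y)\,dy=0$ to insert $\varphi(y)-\varphi(x)$ before integrating against the kernel). Applied with $\varphi=h^\delta$, whose $\mathcal C^\gamma$-norm is bounded uniformly in $\delta$, this yields the desired rate $(t-s)^{(\gamma-1)/\alpha}$ for the frozen part. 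For the perturbative integral, a Taylor expansion of $\sigma$ and of $b$ around the freezing curve, together with their spatial H\"older regularity from \A{D${}_H$}--\A{B${}_H$}, produces a singularity of order $(r-s)^{2\eta/\alpha-1}$ coming from the non-local term (this uses an argument in the spirit of Lemmas \ref{NEW_LEMMA_R}--\ref{LEMME_SENSI_MES_ET_SPACE} below to handle the variable H\"older diffusion coefficient) and an analogous one from the drift perturbation, which is integrable precisely thanks to \A{L}. Combining with an a priori bound $\|D_xu_i^\delta(r,\cdot)\|_\infty\le K(r-s)^{(\gamma-1)/\alpha}$ and running a standard circular/bootstrap closure (exploiting $\alpha+\gamma>1$ so that $(r-s)^{(\gamma-1)/\alpha}$ is integrable on $[s,t]$ and the constant $K$ can be absorbed for $t-s$ small, then iterated in time) gives \eqref{BD_CTR_U_I_DELTA_GRADIENT}.

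For the H\"older modulus \eqref{BD_CTR_U_I_DELTA_MOD_HOLDER_GRADIENT}, I would split into the usual two regimes with threshold $|x-x'|\asymp(t-s)^{1/\alpha}$. In the off-diagonal regime $|x-x'|>(t-s)^{1/\alpha}$, use the gradient bound \eqref{BD_CTR_U_I_DELTA_GRADIENT} at $x$ and at $x'$ separately and trade a factor $1=|x-x'|^0$ for $|x-x'|^{\vartheta}/|x-x'|^{\vartheta}$, converting the excess length into the time singularity of order $(t-s)^{-(1-\varepsilon)}$ prescribed by the exponent $\vartheta$. In the diagonal regime $|x-x'|\le(t-s)^{1/\alpha}$, differentiate the parametrix representation once more: the frozen contribution satisfies $|D^2_x\widetilde P^{(\tau,\xi),\bP_i}_{s,t}h^\delta(x)|\lesssim(t-s)^{(\gamma-2)/\alpha}$ (same cancellation device applied twice), whence interpolating with $|x-x'|^{\vartheta}$ and $|x-x'|^{1-\vartheta}\le(t-s)^{(1-\vartheta)/\alpha}$ reproduces exactly the claimed singularity $(t-s)^{-(1-\varepsilon)}$ upon using $1-\vartheta=\alpha(1-\varepsilon)-(\alpha+\gamma-1)$. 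The perturbative integral is handled analogously, relying again on \A{L} to ensure integrability, and on the a priori control from \eqref{BD_CTR_U_I_DELTA_GRADIENT} which was just established.

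The main obstacle I expect is twofold. First, the drift being \emph{unbounded} forces the freezing along a flow rather than at a point: all subsequent estimates have to be re-expressed in terms of the transported variable $\xi=\theta_{\tau,s}(x)$, which introduces an additional layer of book-keeping and requires the parameter $\varepsilon$ in \eqref{BD_CTR_U_I_DELTA_MOD_HOLDER_GRADIENT} to absorb the loss stemming from the H\"older-in-space drift (this is where the condition $\varepsilon\in(0,1-(1/\alpha-\gamma/\alpha))$ appears naturally). Second, closing the circular argument for the gradient bound is delicate because the source term in the Duhamel identity involves $D_xu_i^\delta$ itself through the non-local operator $L^{\bP_i,\alpha}_s$; one must track carefully the power of $(t-s)$ and $\|h\|_{\mathcal C^\gamma}$ to ensure that the a priori assumption can be recovered with the same constants. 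Once these two points are handled, both \eqref{BD_CTR_U_I_DELTA_GRADIENT} and \eqref{BD_CTR_U_I_DELTA_MOD_HOLDER_GRADIENT} follow, uniformly in the mollification parameter $\delta$, as required for the subsequent passage to the limit $\delta\downarrow 0$.
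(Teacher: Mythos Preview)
Your overall strategy---Duhamel expansion around a proxy frozen along the deterministic flow, cancellation in the frozen term to extract the $(t-s)^{(\gamma-1)/\alpha}$ rate, and a circular closure---is the paper's approach. Two concrete points, however, would prevent the argument from closing as you describe it.

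First, the circular argument for the gradient bound \eqref{BD_CTR_U_I_DELTA_GRADIENT} cannot be decoupled from the H\"older modulus \eqref{BD_CTR_U_I_DELTA_MOD_HOLDER_GRADIENT} when $\alpha\ge 1$. The remainder $R_{\bP_i}^{\alpha,(\tau,\xi)}$ in the Duhamel identity (see Lemma \ref{NEW_LEMMA_R}) then involves $\|D_x u_i^\delta(v,\cdot)\|_{\dot{\mathcal C}^\vartheta}$, not just $\|D_x u_i^\delta(v,\cdot)\|_\infty$, so you cannot bootstrap the gradient first and the modulus afterwards. The paper introduces the pair $\Phi(v):=(t-v)^\Xi\|D_x u_i^\delta(v,\cdot)\|_\infty$ and $\Psi(v):=(t-v)^{\Xi+\vartheta/\alpha}\|D_x u_i^\delta(v,\cdot)\|_{\dot{\mathcal C}^\vartheta}$ and closes them \emph{simultaneously}. (Incidentally, your a priori ansatz should read $(t-r)^{(\gamma-1)/\alpha}$, not $(r-s)^{(\gamma-1)/\alpha}$: the singularity sits at the terminal time, and it is the condition $\alpha+\gamma>1$ that makes $(t-v)^{-\Xi}$ integrable there.)

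Second, your diagonal-regime analysis is too naive. You cannot simply differentiate the parametrix representation once more and interpolate, because the Duhamel expansion of $D_x u_i^\delta(s,x)$ uses the freezing point $(\tau,\xi)=(s,x)$ while that of $D_x u_i^\delta(s,x')$ uses $(s,x')$: the difference is not a spatial increment of a fixed kernel but a change of proxy. The paper's remedy is to introduce an intermediate time $t_0=s+c_0|x-x'|^\alpha$: on $[s,t_0]$ one handles the two points separately (local off-diagonal), on $[t_0,t]$ one uses a \emph{common} freezing point for both, and the mismatch at the switch produces a boundary term $\big(D_x\tilde P_{s,t_0}^{(\tau,x')}-D_x\tilde P_{s,t_0}^{(\tau,x)}\big)u_i^\delta(t_0,x')$ that is controlled by transferring the derivative to the integration variable and using the $\vartheta$-H\"older regularity of $D_y u_i^\delta(t_0,\cdot)$. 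This change-of-freezing-point device (Proposition 12 in \cite{chau:meno:prio:19}) is the technical heart of the diagonal estimate and is missing from your outline; without it the parameter $c_0$ cannot be tuned to make the circular argument contract.
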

\begin{REM}[About the explosion rates of the gradient]
Observe from equation \eqref{BD_CTR_U_I_DELTA_GRADIENT} that  the gradient explodes at an integrable rate in time, precisely from the condition $\alpha+\gamma >1 $ (of course this is meaningful only in the super-critical case $\alpha<1 $, since when $\alpha\ge 1 $ it is readily fulfilled by any $\gamma>0 $). Equation \eqref{BD_CTR_U_I_DELTA_MOD_HOLDER_GRADIENT} then specifies how far we can obtain H\"older moduli in space that explodes at an integrable rate. Namely, the gradient can be seen as a function of regularity order $\alpha+\gamma-1 $. We manage to control the corresponding H\"older modulus with an integrable singularity in time up to an additional \textit{small} factor $\varepsilon \alpha $ (exponent $\vartheta $ above), where in turn $\varepsilon $ quantifies the distance to explosion.
\end{REM}

\begin{lem}[Controls for the source involving the different measures]\label{LEM_DIFF_H}
Let $u_i^\delta $ solve \eqref{EDP_U_I_DELTA}. Taking then, with the notation introduced in \eqref{BD_CTR_U_I_DELTA_MOD_HOLDER_GRADIENT},  
$\vartheta>\alpha-1 $, it holds:
\begin{equation}
\label{CTR_DIST_OP}
| H_{\bP_1,\bP_2} u_i^\delta(s, x)|\le C\bd_{2\eta}(\bP_1(s),\bP_2(s))\Big((t-s)^{-1+\varepsilon}\I_{\left\{\alpha \ge 1\right\}}+(t-s)^{-\big(\frac 1\alpha-\frac{\gamma}{\alpha}\big)}
\Big),
\end{equation}

and for all $ (x,x')\in (\R^d)^2$, for any $\lambda \in [0,1]$
\begin{align}
|& H_{\bP_1,\bP_2} u_i^\delta(s, x)  - H_{\bP_1,\bP_2} u_i^\delta(s, x')|\notag \\
& \le C\bd_{(1-\lambda) 2\eta}(\bP_1(s),\bP_2(s)) |x-x'|^{2\lambda\eta} \Big((t-s)^{-1+\varepsilon}\I_\seq{\alpha \ge 1}+(t-s)^{-\big(\frac 1\alpha-\frac \gamma \alpha\big)}
\Big) \label{CTR_DIST_OP_HOLDER}\\
& \quad + C \bd_{2\eta}(\bP_1(s),\bP_2(s)) (|x-x'|^{\vartheta} + |x-x'|^{1+\vartheta-\alpha}) (t-s)^{-1+\varepsilon}.\notag 
\end{align}
\end{lem}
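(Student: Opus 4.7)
I would split the source into its drift and non-local parts, $H_{\bP_1,\bP_2}u_i^\delta = H^b + H^L$ with
\begin{align*}
H^b(s,x) &:= \big[b(s,x,\bP_1(s))-b(s,x,\bP_2(s))\big]\cdot D_xu_i^\delta(s,x),\\
H^L(s,x) &:= (L_s^{\bP_1,\alpha}-L_s^{\bP_2,\alpha})u_i^\delta(s,x),
\end{align*}
and treat each piece using the measure-regularity bounds \eqref{uniform:bound:diff:mes}--\eqref{holder:reg:diff:mes:b:or:sigma_stable} together with the gradient and Hölder-gradient controls \eqref{BD_CTR_U_I_DELTA_GRADIENT}--\eqref{BD_CTR_U_I_DELTA_MOD_HOLDER_GRADIENT} from Lemma \ref{CHARACTERIZATION_OF_GRADIENT_EXPLOSION_AND_HOLDER_MODULUS}.

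The drift part $H^b$ is the easy one. For \eqref{CTR_DIST_OP}, combine $|\delta b|_\infty\le C\bd_{2\eta}(\bP_1(s),\bP_2(s))$ (from \eqref{uniform:bound:diff:mes} with $\beta=0$) with \eqref{BD_CTR_U_I_DELTA_GRADIENT} to obtain the $(t-s)^{-(1/\alpha-\gamma/\alpha)}$ contribution. For \eqref{CTR_DIST_OP_HOLDER} write
\[H^b(s,x)-H^b(s,x') = \big[\delta b(s,x)-\delta b(s,x')\big]\cdot D_xu_i^\delta(s,x) + \delta b(s,x')\cdot\big[D_xu_i^\delta(s,x)-D_xu_i^\delta(s,x')\big];\]
the first summand, via \eqref{holder:reg:diff:mes:b:or:sigma_stable} with $\beta=\lambda$ and \eqref{BD_CTR_U_I_DELTA_GRADIENT}, yields the $\bd_{(1-\lambda)2\eta}|x-x'|^{2\lambda\eta}(t-s)^{-(1-\gamma)/\alpha}$ piece, while the second, via \eqref{uniform:bound:diff:mes} and \eqref{BD_CTR_U_I_DELTA_MOD_HOLDER_GRADIENT}, yields the $\bd_{2\eta}|x-x'|^\vartheta(t-s)^{-1+\varepsilon}$ piece.

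For the non-local part, by \A{UE} I perform the linear change of variable $y=\sigma_i(x)\zeta$, with $\sigma_i(x):=\sigma(s,x,\bP_i(s))$, to rewrite
\[L_s^{\bP_i,\alpha}\varphi(x)=\mathrm{p.v.}\int_{\R^d}[\varphi(x+y)-\varphi(x)]\,K_{\sigma_i(x)}(y)\,dy,\quad K_\sigma(y):=\frac{|\det\sigma^{-1}|\,g(\overline{\sigma^{-1}y})}{|\sigma^{-1}y|^{d+\alpha}},\]
and, exploiting the symmetry of $K_\sigma$ in $y$, obtain
\[H^L(s,x)=\tfrac12\int_{\R^d}\big[u_i^\delta(s,x+y)+u_i^\delta(s,x-y)-2u_i^\delta(s,x)\big]\big[K_{\sigma_1(x)}(y)-K_{\sigma_2(x)}(y)\big]\,dy.\]
The decisive structural estimate, from the smoothness of $\sigma\mapsto K_\sigma$ (guaranteed by \A{AC} and \A{UE}) and \eqref{uniform:bound:diff:mes}, is $|K_{\sigma_1(x)}(y)-K_{\sigma_2(x)}(y)|\le C\bd_{2\eta}(\bP_1(s),\bP_2(s))|y|^{-d-\alpha}$. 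I then split at $|y|=1$: on $|y|\le 1$ the second-order finite-difference bound $|u_i^\delta(s,x+y)+u_i^\delta(s,x-y)-2u_i^\delta(s,x)|\le C(t-s)^{-1+\varepsilon}|y|^{1+\vartheta}$ coming from \eqref{BD_CTR_U_I_DELTA_MOD_HOLDER_GRADIENT} integrates against $|y|^{-d-\alpha}$ \emph{precisely} under the standing hypothesis $\vartheta>\alpha-1$, producing the $(t-s)^{-1+\varepsilon}$ contribution (relevant in the subcritical case); on $|y|>1$ the boundedness of $u_i^\delta$ and $\int_{|y|>1}|y|^{-d-\alpha}\,dy<\infty$ close the argument. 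In the super-critical case $\alpha<1$ one instead uses on $|y|\le 1$ the first-order bound $|u_i^\delta(s,x+y)-u_i^\delta(s,x)|\le C(t-s)^{-(1-\gamma)/\alpha}|y|$ from \eqref{BD_CTR_U_I_DELTA_GRADIENT}, integrable because $1-\alpha>0$, which recovers the $(t-s)^{-(1/\alpha-\gamma/\alpha)}$ contribution in \eqref{CTR_DIST_OP}.

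For the Hölder estimate of $H^L$, I would decompose $H^L(s,x)-H^L(s,x')=J_1+J_2$ by alternately freezing the kernel at $x$ and the second-order $u$-difference at $x'$. The term $J_2$ is controlled by the kernel variation $|[K_{\sigma_1(x)}-K_{\sigma_2(x)}]-[K_{\sigma_1(x')}-K_{\sigma_2(x')}]|(y)\le C\bd_{(1-\lambda)2\eta}|x-x'|^{2\lambda\eta}|y|^{-d-\alpha}$, itself a consequence of \eqref{holder:reg:diff:mes:b:or:sigma_stable} with $\beta=\lambda$ and the smoothness of $\sigma\mapsto K_\sigma$; combined with the small/large-$|y|$ split above, it produces the $\bd_{(1-\lambda)2\eta}|x-x'|^{2\lambda\eta}[(t-s)^{-1+\varepsilon}\I_{\alpha\ge 1}+(t-s)^{-(1/\alpha-\gamma/\alpha)}]$ line of \eqref{CTR_DIST_OP_HOLDER}. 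The term $J_1$ requires controlling $|\Delta^2u_i^\delta(s,x;y)-\Delta^2u_i^\delta(s,x';y)|$ with $\Delta^2u(s,z;y):=u(s,z+y)+u(s,z-y)-2u(s,z)$; I would interpolate between the two natural bounds $C(t-s)^{-1+\varepsilon}|y|^{1+\vartheta}$ (brute Hölder of $Du$ at both $x$ and $x'$) and $C(t-s)^{-1+\varepsilon}|y|^{\vartheta}|x-x'|$ (mean value in the $x$-direction combined with Hölder of $Du$), and split the $|y|$-integral at the scale $|y|\sim|x-x'|$: this produces the mixed exponent $|x-x'|^{1+\vartheta-\alpha}$, whereas the $|x-x'|^\vartheta$ piece arises from the large-$|y|$ regime where boundedness of $u_i^\delta$ takes over. \textbf{The main obstacle} is precisely this last interpolation for $J_1$: one must verify that the somewhat unusual exponent $1+\vartheta-\alpha$ is strictly positive (equivalent to the standing assumption $\vartheta>\alpha-1$) so that the intermediate-scale $|y|$-integral converges and genuinely yields a Hölder-type control in $x$.
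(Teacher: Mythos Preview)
Your approach is essentially the same as the paper's: the paper factors the argument through an auxiliary sensitivity lemma (Lemma~\ref{LEMME_SENSI_MES_ET_SPACE}) for a generic $\varphi\in\mathcal C^{1+\vartheta}$, proving exactly your kernel representation $K_\sigma(y)=|\det\sigma^{-1}|\,g(\sigma^{-1}y/|\sigma^{-1}y|)\,|\sigma^{-1}y|^{-d-\alpha}$, the bound $|K_{\sigma_1(x)}-K_{\sigma_2(x)}|\le C\bd_{2\eta}|y|^{-d-\alpha}$, and your $J_1/J_2$ split (there called $\mathscr D_2/\mathscr D_1$), and then plugs in the a~priori bounds of Lemma~\ref{CHARACTERIZATION_OF_GRADIENT_EXPLOSION_AND_HOLDER_MODULUS}. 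Your direct treatment with $u_i^\delta$ and symmetrized second differences is an equivalent packaging.

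One small slip: in $J_1$ you say the $|x-x'|^\vartheta$ piece ``arises from the large-$|y|$ regime where boundedness of $u_i^\delta$ takes over''. Boundedness alone gives $\int_{|y|>1}\|u\|_\infty\,\bd_{2\eta}|y|^{-d-\alpha}\,dy\le C\bd_{2\eta}$, with no $|x-x'|$ factor and the wrong time weight. The correct move (and the paper's) is to keep your ``bound~2'', i.e.\ $|\Delta^2u(x;y)-\Delta^2u(x';y)|\le C(t-s)^{-1+\varepsilon}|x-x'|\,|y|^\vartheta$, on the whole region $|y|\ge|x-x'|$: since $\vartheta<\alpha$ this integrates at infinity and yields $C\bd_{2\eta}(t-s)^{-1+\varepsilon}|x-x'|^{1+\vartheta-\alpha}$. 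No separate $|x-x'|^\vartheta$ contribution comes out of $J_1$; in the final estimate that exponent is supplied solely by the drift term $H^b$ via $\delta b(s,x')\cdot[D_xu_i^\delta(s,x)-D_xu_i^\delta(s,x')]$, which you already handled.
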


\subsection{Proxy and a priori controls}
\label{THE_SEC_FOR_W}
For given \textit{freezing} parameters $(\tau,\xi)\in [0,T]\times \R^d$ to be specified later on, introduce the flow:
 \begin{equation}\label{DEF_FLOW}
\theta_{s,\tau}(\xi)=\xi+\int_{\tau}^sb(v,\theta_{v,\tau}(\xi),\bP_1(v))dv, \;\; s \ge \tau,
\end{equation} 
 $\theta_{s, \tau}(\xi) = \xi$ for $s < \tau$. In the current setting (H\"older drift in space), this object exists from the Peano theorem but is not necessarily unique.
 Rewrite then \eqref{PDE_W} as follows
\begin{equation}
\label{FROZEN_Asso_PDE}
\left\{
\begin{array}{l}
\Big(\partial_v   +  \tilde L_v^{\bP_1,\alpha,(\tau,\xi)}  \Big)w(v, x) + b(v,\theta_{v,\tau}(\xi),\bP_1(v)) \cdot D_x w(v,x) \\
\hspace*{.5cm}= -\Big(H_{\bP_1,\bP_2} u^{\delta}_2(v, x)+R_{\bP_1}^{\alpha,(\tau,\xi)}(v,x)\Big),\ (v,x)\in [0,t)\times \R^d, \\
w(t,x) =0,\quad \text{on }\R^d,
\end{array} \right. 
\end{equation}
where for all $\varphi \in C_0^2(\R^d,\R) $
\begin{equation}
\label{DEF_OPERATEUR_GELE}
\tilde L_v^{\bP_1,\alpha,(\tau,\xi)} \varphi(x):={\rm {p.v.}}\int_{\R^d} \big(\varphi(x+\sigma(v,\theta_{v,\tau}(\xi),\bP_1(v)) \zeta)-\varphi(x) \big) \nu(d\zeta)
\end{equation}
and
\begin{equation}\label{THE_TERM_PERTURB_FIXED_MEASURE}
R_{\bP_1}^{\alpha,(\tau,\xi)}(v,x):=\big(b(v,x,\bP_1(v))-b(v,\theta_{v,\tau}(\xi),\bP_1(v)) \big) \cdot D_x w(v, x)+\Big(L_s^{\bP_1,\alpha}-\tilde L_s^{\bP_1,\alpha,(\tau,\xi)}\Big) w(v,x).
\end{equation}

Precisely, the contribution $R_{\bP_1}^{\alpha,(\tau,\xi)}$ corresponds, for the measure argument fixed to $\bP_1 $, to the difference of the generator with variable coefficients and the proxy one which is frozen along the deterministic flow \eqref{DEF_FLOW}.
The following Lemma proved in Appendix \ref{SEC_PROOF_NEW_LEMMA_R} gives quantitative bounds on this remainder term.
 \begin{lem}[Bounds for the difference of the generators for a fixed measure argument]
\label{NEW_LEMMA_R}
There exists a constant $C$ s.t. for all $(\tau,\xi),\ (v, y)\in [0,T]\times \R^d $:
\begin{eqnarray}\label{CTR_PT_R_LEM}
 \big|R_{\bP_1}^{\alpha,(\tau,\xi)}(v,y)\big|
\le C\Big(\|b(v,\cdot,\bP_1(v))\|_{\dot {\mathcal C}^{2\eta}}|D_y w(v, y)|+ \|\sigma(v,\cdot,\bP_1(v))\|_{{\mathcal C}^{2\eta}}\big[ (\|w\|_\infty+\|D_x w(v,\cdot)\|_\infty)\I_{\left\{\alpha<1\right\}}\notag\\
+\|D_xw(v,\cdot)\|_{{\mathcal C}^{\vartheta}}\I_{\left\{\alpha>1\right\}}+\|w(v,\cdot)\|_{{\mathcal C}^{1+\vartheta}}\I_{\left\{\alpha=1\right\}}\big]\Big)|y-\theta_{v,\tau}(\xi)|^{2\eta}.
\end{eqnarray}
 \end{lem}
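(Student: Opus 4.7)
The plan is to split $R_{\bP_1}^{\alpha,(\tau,\xi)}(v,y)$ into its drift and non-local contributions and bound each separately. The drift term $\bigl(b(v,y,\bP_1(v))-b(v,\theta_{v,\tau}(\xi),\bP_1(v))\bigr)\cdot D_y w(v,y)$ is immediately controlled via the $2\eta$-H\"older regularity of $x\mapsto b(v,x,\bP_1(v))$ from \A{B${}_H $}, which directly produces the first summand on the right-hand side of \eqref{CTR_PT_R_LEM}. For the non-local contribution, set $\sigma_1:=\sigma(v,y,\bP_1(v))$ and $\sigma_2:=\sigma(v,\theta_{v,\tau}(\xi),\bP_1(v))$; the $2\eta$-H\"older regularity of $\sigma$ from \A{D${}_H $} gives $|\sigma_1-\sigma_2|\le\|\sigma(v,\cdot,\bP_1(v))\|_{{\mathcal C}^{2\eta}}|y-\theta_{v,\tau}(\xi)|^{2\eta}$, so the problem reduces to bounding
\[
(L_s^{\bP_1,\alpha}-\tilde L_s^{\bP_1,\alpha,(\tau,\xi)})w(v,y)=\mathrm{p.v.}\int_{\R^d}\bigl[w(y+\sigma_1\zeta)-w(y+\sigma_2\zeta)\bigr]\,\nu(d\zeta)
\]
by $|\sigma_1-\sigma_2|$ times the $w$-norm appropriate to the regime of $\alpha$.

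This integral is analyzed by splitting at $|\zeta|=1$. In the super-critical case $\alpha<1$, no compensator is needed: a first-order Taylor expansion on $\{|\zeta|\le 1\}$ produces the integrand $\|D_xw\|_\infty|\sigma_1-\sigma_2||\zeta|$, which is $\nu$-integrable thanks to $\alpha<1$, while on $\{|\zeta|>1\}$ the crude bound $2\|w\|_\infty$ suffices; this yields the $\I_{\{\alpha<1\}}$-contribution. In the sub-critical case $\alpha>1$, one exploits the symmetry of $\omega$ (from \A{AC}) to add the compensator $Dw(y)(\sigma_1-\sigma_2)\zeta\I_{|\zeta|\le 1}$, which has vanishing $\nu$-mean; a second-order expansion combined with the $\vartheta$-H\"older regularity of $D_xw$ from Lemma \ref{CHARACTERIZATION_OF_GRADIENT_EXPLOSION_AND_HOLDER_MODULUS} then controls the near-origin integrand by $\|D_xw\|_{{\mathcal C}^{\vartheta}}|\sigma_1-\sigma_2||\zeta|^{1+\vartheta}$, which is $\nu$-integrable precisely under the assumed condition $\vartheta>\alpha-1$, while on $\{|\zeta|>1\}$ a direct mean-value bound is $\nu$-integrable since $\int_{|\zeta|>1}|\zeta|\,\nu(d\zeta)<\infty$ when $\alpha>1$. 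The borderline case $\alpha=1$ is treated as the sub-critical one on $\{|\zeta|\le 1\}$, but on $\{|\zeta|>1\}$ the mean-value bound is no longer $\nu$-integrable, forcing the use of $2\|w\|_\infty$; the three pieces are then conveniently collected by the single norm $\|w\|_{{\mathcal C}^{1+\vartheta}}$.

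The main technical obstacle lies in the careful handling of the principal value together with the compensator in the regimes $\alpha\ge 1$: one must verify, using the symmetry of the spherical measure $\omega$ guaranteed by \A{AC}, that the inserted compensator has vanishing $\nu$-integral so that it may be freely subtracted without altering the value of the non-local difference, and one must precisely track the integrability threshold $\vartheta>\alpha-1$ ensuring finiteness of $\int_{|\zeta|\le 1}|\zeta|^{1+\vartheta}\nu(d\zeta)$ near the origin---this is exactly the condition imposed at the outset of the lemma. Assembling the drift and non-local bounds then produces \eqref{CTR_PT_R_LEM}.
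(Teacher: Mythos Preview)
Your treatment of the drift term and of the non-local part in the sub-critical case $\alpha>1$ is fine, and the compensator argument with the $\vartheta$-H\"older modulus of $D_xw$ correctly produces the factor $|\sigma_1-\sigma_2|\,|\zeta|^{1+\vartheta}$ near the origin. The gap is in your handling of the large jumps when $\alpha\le 1$. Using the crude bound $|w(y+\sigma_1\zeta)-w(y+\sigma_2\zeta)|\le 2\|w\|_\infty$ on $\{|\zeta|>1\}$ gives a contribution $C\|w\|_\infty\,\nu(\{|\zeta|>1\})$ with \emph{no} factor of $|\sigma_1-\sigma_2|$, hence no factor $|y-\theta_{v,\tau}(\xi)|^{2\eta}$. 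The mean-value bound does give that factor but, as you note, is not $\nu$-integrable for $\alpha\le 1$; interpolating via a H\"older bound of order $\beta<\alpha$ only yields $|\sigma_1-\sigma_2|^\beta$, i.e.\ the wrong power $|y-\theta_{v,\tau}(\xi)|^{2\eta\beta}$. So in the regimes $\alpha<1$ and $\alpha=1$ your estimate does not deliver the right-hand side of \eqref{CTR_PT_R_LEM}.

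The paper's route avoids this obstruction by exploiting the absolute continuity assumption \A{AC} to perform the change of variable $\tilde z=\sigma_i\zeta$ in each of the two non-local integrals \emph{before} taking their difference. This transfers the entire $\sigma$-dependence to the density of the L\'evy measure: the difference becomes
\[
\mathrm{p.v.}\int_0^\infty\frac{dr}{r^{1+\alpha}}\int_{{\mathbb S}^{d-1}}\Lambda_{{\mathbb S}^{d-1}}(d\theta)\,[w(y+r\theta)-w(y)]\,\bigl[D(v,y,\theta)-D(v,\theta_{v,\tau}(\xi),\theta)\bigr],
\]
where $D$ is the Jacobian factor built from $g$ and $\sigma^{-1}$ as in \eqref{DEF_D}. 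The Lipschitz regularity of $g$ and the non-degeneracy of $\sigma$ give $|D(v,y,\theta)-D(v,\theta_{v,\tau}(\xi),\theta)|\le C\|\sigma(v,\cdot,\bP_1(v))\|_{{\mathcal C}^{2\eta}}|y-\theta_{v,\tau}(\xi)|^{2\eta}$ \emph{uniformly in $r$}, so the required spatial factor sits outside the radial integral. The remaining integral $\mathrm{p.v.}\int[w(y+r\theta)-w(y)]r^{-1-\alpha}\,dr\,d\theta$ is then a single (not differenced) stable-type integral, and the crude bound $2\|w\|_\infty$ on $\{r>1\}$ is harmless because the factor $|y-\theta_{v,\tau}(\xi)|^{2\eta}$ has already been secured. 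The paper's Remark following Lemma \ref{LEMME_SENSI_MES_ET_SPACE} explains precisely that without \A{AC} one is stuck with $|\sigma_1-\sigma_2|^\beta$, $\beta<\alpha$, on the large jumps---which is exactly the wall your argument hits.
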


Also, under \A{A${}_S $}, it is clear that the time-dependent operator $\tilde L_v^{\bP_1,\alpha,(\tau,\xi)}+  b(v, \theta_{v,\tau}(\xi),\bP_1 ) \cdot D_x $ generates a family of transition probability (or two parameter transition semi-group) $\big(\tilde P_{s,v}^{\bP_1,\alpha,(\tau,\xi)})_{0\le s\le v\le t}$. Because of the non-degeneracy conditions of \eqref{NON_DEG_STABLE} and \eqref{COND_UE_S} in \A{ND} and \A{UE}, the associated heat-kernel exists (see e.g. \cite{kolo:00}, \cite{wata:07} or Appendix \ref{APP_STABLE_CORRESPONDENCE} below)
 and, for fixed $0\le s< v\le t $, it writes: 
\begin{equation}\label{CORRESP_SHIFTED}
\tilde p^{\bP_1,\alpha,(\tau,\xi)}(s,v,x,y)= p_{\Theta_{s,v,(\tau,\xi),\bP_1}}\Big(y-m_{v,s}^{(\tau,\xi)}(x)\Big),
\end{equation}
where we denoted:
\begin{eqnarray}
m_{v,s}^{(\tau,\xi)}(x)&:=&x+\int_s^v b(r,\theta_{r,\tau}(\xi),\bP_1(r))dr,\label{FROZEN_FLOW}\\
\Theta_{s,v,(\tau,\xi),\bP_1}&:=&\int_s^v \sigma(r,\theta_{r,\tau}(\xi),\bP_1(r))dZ_r,\label{DEF_THETHA_RV_FROZEN}
\end{eqnarray}

\noindent and where $Z$ is a stable process  with L\'evy measure $\nu $ defined on some probability space $(\Omega, {\mathcal A},\P) $.

Observe that from the definition \eqref{FROZEN_FLOW} of the \textit{shift} $m_{v,s}^{(\tau,\xi)}(x) $ we have the important property 
 \begin{equation}
\label{CORRES_LINEARIZED_FLOW_AND_FLOW}
 m_{v,s}^{(\tau,\xi)}(x)|_{(\tau,\xi)=(s,x)}=\theta_{v,s}(x).
 \end{equation}

Let us first give some bounds on the density function of the stochastic integral \eqref{DEF_THETHA_RV_FROZEN} and its derivatives. They are somehow classic under the absolute continuity condition \A{AC} for the L\'evy measure (see e.g. \cite{kolo:00}). For the sake of completeness, the proof of the following result is postponed to Appendix \ref{APP_STABLE_CORRESPONDENCE}. .

\begin{lem}[Controls on the proxy density and its derivatives]\label{DER_DENS_PROXY}
Assume \A{ND} and \A{UE} hold. Then, there exists a constant $C$ and a probability density $\bar q :\R_*^+\times \R^d\rightarrow \R$ satisfying for all $\gamma<\alpha $ the integrability property 
\begin{equation}
\label{INT}
\int_{\R^d} |z|^{\gamma}\bar q(t-s,z) dz
\le C(t-s)^{\frac \gamma \alpha},
\end{equation}
s.t. for all multi-index $\beta, |\beta|\le 3 $ and for any $0\le s<t, \, (\tau,\xi)\in \R_+\times \R^d, \bmu\in {\mathcal C}(\R^+,{\mathcal P}(\R^d)), \, y\in \R^d $:
\begin{equation}
\label{THE_CTR_DER_DENS_PROXY}
|D^\beta_y p_{\Theta_{s,t,(\tau,\xi),\bmu}}(y)|\le \frac{C}{(t-s)^{\frac{|\beta|}{\alpha}}}\bar q(t-s,y). 
\end{equation}
Also, any \textit{diagonal perturbation} does not affect the previous estimate. Namely, for $z\in \R^d$ such that $|z|\le (t-s)^{\frac 1\alpha} $, it holds:
\begin{equation}
\label{THE_CTR_DER_DENS_PROXY_DIAG_PERTURB}
|D^\beta_y p_{\Theta_{s,t,(\tau,\xi),\bmu}}(y+z)|\le \frac{C}{(t-s)^{\frac{|\beta|}{\alpha}}}\bar q(t-s, y). 
\end{equation}

\end{lem}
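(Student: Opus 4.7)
The strategy is to exploit the explicit Fourier representation of the density of $\Theta_{s,t,(\tau,\xi),\bmu}$ together with the uniform ellipticity of its characteristic exponent afforded by \A{ND} and \A{UE}. Writing $\sigma_r := \sigma(r,\theta_{r,\tau}(\xi),\bmu(r))$ and denoting the symbol of $Z$ by $\Psi(\zeta) = c_\alpha\int_{\mathbb{S}^{d-1}}|\langle \zeta,\xi\rangle|^\alpha\omega(d\xi)$ (a direct consequence of \eqref{stab_decomp} and the symmetry of $\omega$), the L\'evy--Khintchine formula combined with the independent-increment property of $Z$ yields
\begin{equation*}
\E\bigl[e^{i\langle \zeta,\Theta_{s,t,(\tau,\xi),\bmu}\rangle}\bigr] \;=\; \exp\Bigl(-\int_s^t\Psi(\sigma_r^*\zeta)\,dr\Bigr).
\end{equation*}
Assumption \A{ND} provides $\kappa^{-1}|\zeta|^\alpha\le \Psi(\zeta)/c_\alpha \le \kappa|\zeta|^\alpha$, while \A{UE} ensures $|\sigma_r^*\zeta|^\alpha\asymp|\zeta|^\alpha$ uniformly in $r,(\tau,\xi),\bmu$; hence $\int_s^t\Psi(\sigma_r^*\zeta)\,dr \asymp (t-s)|\zeta|^\alpha$.

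Fourier inversion then produces a smooth density, and differentiating under the integral sign I obtain
\begin{equation*}
D_y^\beta p_{\Theta_{s,t,(\tau,\xi),\bmu}}(y) \;=\; \frac{(-i)^{|\beta|}}{(2\pi)^d}\int_{\R^d}\zeta^{\beta}e^{-i\langle \zeta,y\rangle}\exp\Bigl(-\int_s^t\Psi(\sigma_r^*\zeta)\,dr\Bigr)d\zeta,
\end{equation*}
all integrals being absolutely convergent thanks to the bound $e^{-c(t-s)|\zeta|^\alpha}$. Rescaling $\zeta\mapsto (t-s)^{-1/\alpha}\zeta$ produces the self-similar identity
\begin{equation*}
D_y^\beta p_{\Theta_{s,t,(\tau,\xi),\bmu}}(y) \;=\; \frac{1}{(t-s)^{(d+|\beta|)/\alpha}}\,G_\beta\Bigl(\frac{y}{(t-s)^{1/\alpha}}\Bigr),
\end{equation*}
for an auxiliary function $G_\beta=G_{\beta,s,t,(\tau,\xi),\bmu}$ whose modulus is bounded, uniformly in the parameters, by a deterministic profile of $|y|/(t-s)^{1/\alpha}$ alone.

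The core technical step is the pointwise stable-like decay $|G_\beta(u)|\le C(1+|u|)^{-(d+\alpha)}$. For $|u|\le 1$ this is immediate from $\int|\zeta|^{|\beta|}e^{-c|\zeta|^\alpha}d\zeta<\infty$. For $|u|\ge 1$ one performs repeated integration by parts along the direction $u/|u|$: after a smooth cutoff of scale $|u|^{-1}$ around the origin, the near-origin contribution is controlled by its natural scale, while on the complementary region the positive homogeneity of $\Psi$ and its smoothness on $\R^d\setminus\{0\}$ (for which \A{AC}, via \eqref{EXPR_FOR_F}, is convenient) permit the required differentiations. This is the classical stable-like density estimate as carried out in Kolokoltsov \cite{kolo:00} and Watanabe \cite{wata:07}; the time inhomogeneity and the $\sigma_r$ dependence do not alter the argument, thanks to the uniform comparability of the exponent to $(t-s)|\zeta|^\alpha$. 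Undoing the scaling and setting
\begin{equation*}
\bar q(t-s,y) \;:=\; c\,\frac{(t-s)}{\bigl((t-s)^{1/\alpha}+|y|\bigr)^{d+\alpha}}
\end{equation*}
(with $c$ chosen so that $\bar q(t-s,\cdot)$ integrates to $1$) yields exactly \eqref{THE_CTR_DER_DENS_PROXY}.

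The moment bound \eqref{INT} reduces to a direct polar-coordinate computation: the tail decays as $|y|^{-(d+\alpha)}$, ensuring convergence for $\gamma<\alpha$, and the scaling $(t-s)^{\gamma/\alpha}$ is then manifest. Finally, the diagonal perturbation estimate \eqref{THE_CTR_DER_DENS_PROXY_DIAG_PERTURB} is an elementary consequence of the inequality $(t-s)^{1/\alpha}+|y+z|\ge c\bigl((t-s)^{1/\alpha}+|y|\bigr)$ valid whenever $|z|\le(t-s)^{1/\alpha}$ (treated by splitting $|y|\le 2(t-s)^{1/\alpha}$ versus $|y|\ge 2(t-s)^{1/\alpha}$), from which $\bar q(t-s,y+z)\le C\bar q(t-s,y)$. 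The only genuine obstacle is the stable-like pointwise decay of $G_\beta$; the remainder reduces to scaling and elementary manipulations.
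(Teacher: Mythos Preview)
Your approach is sound but takes a genuinely different route from the paper. Rather than estimating the full Fourier transform directly, the paper performs the It\^o--L\'evy split at the self-similar scale $(t-s)^{1/\alpha}$, writing $\Theta_{s,t,(\tau,\xi),\bmu}=M+N$ with $M$ the small-jump martingale part and $N$ the large-jump compound Poisson part. Lemma~\ref{EST_DENS_MART} shows that $p_M$ has arbitrary polynomial decay (Schwartz class) uniformly in all freezing parameters, using only \A{ND} and \A{UE}; Lemma~\ref{EST_POISSON} shows under \A{AC} that the Poisson law $P_N$ is dominated by a parameter-free reference Poisson measure $P_{\bar N_{\rho_{s,t}}}$. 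The resulting $\bar q$ is the convolution $p_{\bar M_{s,t},m}\star P_{\bar N_{\rho_{s,t}}}$ for any $m\ge d+1$, not the explicit stable kernel you propose. Your route yields a sharper and more explicit $\bar q$, but the integration-by-parts step for the pointwise $(1+|u|)^{-(d+\alpha)}$ decay requires the time-averaged symbol $\frac{1}{t-s}\int_s^t\Psi(\sigma_r^*\,\cdot\,)dr$ to be sufficiently differentiable on $\R^d\setminus\{0\}$ uniformly in the parameters; under \A{AC} with $g$ merely $C^{1,1}$ this is not automatic in high dimension, and you should verify that the precise hypotheses in \cite{kolo:00}, \cite{wata:07} are actually met here. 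The paper's decomposition sidesteps this entirely by never asking for more than polynomial decay on the small-jump density, which follows from elementary bounds on $\hat f_{s,t}$ and its derivatives.
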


As a very important consequence of this lemma, we also derive from \eqref{CORRESP_SHIFTED}-\eqref{FROZEN_FLOW} that for all $(x,y)\in (\R^d)^2 $:
\begin{eqnarray}\label{THE_CORRESP_DER_INT_STO}
|D_x^\beta \tilde p^{\bP_1,\alpha,(\tau,\xi)}(s,v,x,y)|&:=&|D_x^\beta p_{\Theta_{s,v,(\tau,\xi),\bP_1}}\big(y- m_{v,s}^{(\tau,\xi)}(x)\big)| \le \frac{C}{(v-s)^{\frac{|\beta|}{\alpha}}}\bar q\big(v-s,y-m_{v,s}^{(\tau,\xi)}(x)\big).
\end{eqnarray}

\subsection{Proof of Proposition \ref{PROP_STAB}}
Throughout this subsection, we set $\gamma = \eta$, that is, $h\in {\mathcal C}^{\eta}(\R^d,\R)$ with $\|h\|_{{\mathcal C}^\eta}\le 1 $. With the notations and controls of the previous subsection at hand, we can now restart from \eqref{FROZEN_Asso_PDE} to write for all $(s,x)\in [0,t]\times \R^d $,
\begin{eqnarray}
w(s,x)&=&\int_s^t dv \Big(\tilde  P_{s,v}^{\bP_1,\alpha,(\tau,\xi)}\big(H_{\bP_1,\bP_2} u^{\delta}_2(v,\cdot)+R_{\bP_1}^{\alpha,(\tau,\xi)}(v,\cdot)\big)\Big)(x)\notag\\
&=&\int_s^t dv \int_{\R^d }\tilde  p^{\bP_1,\alpha,(\tau,\xi)}(s,v,x,y) \big(H_{\bP_1,\bP_2} u^{\delta}_2(v,y)+R_{\bP_1}^{\alpha,(\tau,\xi)}(v,y)\big)dy.\label{PREAL_CIRC}
\end{eqnarray}
From \eqref{PREAL_CIRC} and the definition of $R_{\bP_1}^{\alpha,(\tau,\xi)} $ in \eqref{THE_TERM_PERTURB_FIXED_MEASURE}, we see that $w$ appears in both sides of the equality. The point is now to perform a circular argument to control $w$. To this end, we also point out that the a priori controls of Lemma \ref{CHARACTERIZATION_OF_GRADIENT_EXPLOSION_AND_HOLDER_MODULUS} will be useful to control $H_{\bP_1,\bP_2}u_2^\delta $ introduced in \eqref{PDE_W}. To complete the proof of Proposition \ref{PROP_STAB} we aim at showing the following: 
\begin{eqnarray}
|w(s,x)|\le  C\bd_{s,t,\eta}(\bP_1,\bP_2)(t-s)^{\tilde \zeta},\label{THE_PREAL_CTR_ON_W}
\end{eqnarray}

\noindent for some ${\tilde \zeta}>0$, which precisely gives \eqref{THE_EQ_SENSI_SG_MES} for $h$ replaced by $h^\delta $.

Write first that, from \A{B${}_H $}, the definition of $H_{\bP_1,\bP_2}u_2^\delta$ and Lemma \ref{CHARACTERIZATION_OF_GRADIENT_EXPLOSION_AND_HOLDER_MODULUS} (see equations \eqref{BD_CTR_U_I_DELTA_MOD_HOLDER_GRADIENT} and \eqref{CTR_DIST_OP} with $\gamma = \eta$), it holds that there exists a constant $C$ s.t. for all $(v,y)\in [s,t)\times \R^d $:
\begin{eqnarray}\label{CTR_H_BSUP}
|H_{\bP_1,\bP_2} u^{\delta}_2 (v, y)|\le C \bd_\eta(\bP_1(v),\bP_2(v)) \Big((t-v)^{-1+\varepsilon}\I_{\left\{ \alpha \ge 1 \right\}}+(t-v)^{-\big(\frac 1\alpha-\frac \eta \alpha\big)} \Big),
\end{eqnarray}
using as well for the above inequality that, from the definitions  in \eqref{DIST_CONT_MEAS}-\eqref{DIST_CONT_MEAS_FLOWS}, 
\begin{equation}\label{DOM_DIST}
\forall \eta\in (0,\frac 12],\ \bd_{2\eta}(\bP_1(v),\bP_2(v))\le \bd_\eta(\bP_1(v),\bP_2(v)) .
\end{equation} 
This bound will be frequently used in the sequel.

On the other hand, from Lemma \ref{NEW_LEMMA_R} 
and \eqref{THE_CORRESP_DER_INT_STO}:
\begin{eqnarray}
\Big|\int_s^t dv \int_{\R^d }\tilde  p^{\bP_1,\alpha,(\tau,\xi)}(s,v,x,y) R_{\bP_1}^{\alpha,(\tau,\xi)}(v,y)dy\Big|\label{THE_CTR_PERTURB_1}\\
\le C\int_s^t dv \int_{\R^d} dy\Big(\|b(v,\cdot,\bP_1(v))\|_{\dot {\mathcal C}^{2\eta}}|D_y w(v, y)|+ \|\sigma(v,\cdot,\bP_1(v))\|_{{\mathcal C}^{2\eta}}\big[ (\|w\|_\infty+\|D_x w(v,\cdot)\|_\infty)\I_{\left\{\alpha<1\right\}}\notag\\
+\|D_xw(v,\cdot)\|_{{\mathcal C}^{\vartheta}}\I_{\left\{\alpha>1\right\}}+\|w(v,\cdot)\|_{{\mathcal C}^{1+\vartheta}}\I_{\left\{\alpha=1\right\}}\big]\Big)|y-\theta_{v,\tau}(\xi)|^{2\eta} \bar q\big(v-s,y-m_{v,s}^{(\tau,\xi)}(x)\big),\notag
\end{eqnarray}
denoting here and for the rest of the section with a slight abuse of notation $\|w\|_\infty:=\sup_{v\in [s,t]}\|w(v,\cdot)\|_\infty $.

\noindent Hence,  choosing $(\tau,\xi)=(s,x) $ and exploiting \eqref{CORRES_LINEARIZED_FLOW_AND_FLOW} and Lemma \ref{DER_DENS_PROXY}, we derive:
\begin{eqnarray}
\Big|\int_s^t dv \int_{\R^d }\tilde  p^{\bP_1,\alpha,(\tau,\xi)}(s,v,x,y) R_{\bP_1}^{\alpha,(\tau,\xi)}(v,y)dy\Big| \Bigg|_{(\tau,\xi)=(s,x)}\notag\\
\le C\int_s^t dv (v-s)^{\frac{2\eta}{\alpha}}\Big( \|w\|_\infty\I_{\left\{\alpha\le 1\right\}}+\|D_xw(v,\cdot)\|_\infty+\|D_xw(v,\cdot)\|_{\dot {\mathcal C}^{\vartheta}}\I_{\left\{\alpha\ge 1\right\}}\Big).\label{PREAL_CIRCULAR_2}
\end{eqnarray}
From \eqref{PREAL_CIRCULAR_2}, \eqref{CTR_H_BSUP} and \eqref{PREAL_CIRC}, one derives:
\begin{eqnarray}
|w(s,x)|&\le& C\bigg(\bd_{\eta,s,t}(\bP_1,\bP_2)\big[(t-s)^{\varepsilon\wedge (1-(\frac{1}{\alpha}-\frac{\eta}{\alpha}))}\I_{\left\{\alpha\ge 1\right\}}+(t-s)^{(1-(\frac{1}{\alpha}-\frac{\eta}{\alpha}))}
\big]+(t-s)^{}\|w\|_\infty\I_{\left\{\alpha\le1\right\}}\notag\\
&&+\int_s^t dv (v-s)^{\frac{2\eta}{\alpha}}\Big( \|D_xw(v,\cdot)\|_\infty+\|D_xw(v,\cdot)\|_{\dot {\mathcal C}^{\vartheta}}\I_{\left\{\alpha\ge 1\right\}}\Big)\bigg).
\label{ALL_READY_FOR_CIRCULAR}
\end{eqnarray}
It is now clear that we actually have to estimate the gradient of $w$, i.e. we need to differentiate \eqref{PREAL_CIRC}. We will now split the rest of the analysis in function of the index $\alpha $. Indeed, for $\alpha>1 $, differentiating \eqref{PREAL_CIRC} yields an integrable singularity in time and the previous controls do not change much. The counterpart is that we also need to give a bound on the H\"older modulus of continuity.
On the other hand, for $\alpha \le 1 $, the induced time singularity is not integrable and it is crucial to equilibrate it through adequate cancellation techniques.

\subsubsection*{The case $\alpha\ge 1 $}
From \eqref{PREAL_CIRC}, we can differentiate w.r.t $x$ and also use a cancellation technique for the source term $H_{\bP_1,\bP_2}u_2^\delta $. Namely,
\begin{equation}
D_xw(s,x)
=\int_s^t dv \int_{\R^d } D_x \tilde p^{\bP_1,\alpha,(\tau,\xi)}(s,v,x,y) \Big(\big[H_{\bP_1,\bP_2}u_2^\delta(v, y)-H_{\bP_1,\bP_2}u_2^\delta(v,\theta_{v,\tau}(\xi))\big]+R_{\bP_1}^{\alpha,(\tau,\xi)}(v,y)\Big)dy.\label{DIFF_WITH_CANCEL}
\end{equation}
Similarly to \eqref{CTR_H_BSUP}, we derive from \A{B${}_H $}, equation \eqref{CTR_DIST_OP_HOLDER} (with $\lambda = 1/2$) of Lemma 
\ref{CHARACTERIZATION_OF_GRADIENT_EXPLOSION_AND_HOLDER_MODULUS} and \eqref{DOM_DIST} that:
 \begin{eqnarray}\label{CTR_H_BSUP_HOLDER_alpha_GE_1_WITH_ALL}
|H_{\bP_1,\bP_2}u_2^\delta(v, y) - H_{\bP_1,\bP_2}u_2^\delta(v,\theta_{v,\tau}(\xi))|&\le& C\bd_\eta(\bP_1(v),\bP_2(v)) \Big[ (|y-\theta_{v,\tau}(\xi)|^\eta\wedge 1) \Big((t-v)^{-1+\varepsilon}+(t-v)^{-\big(\frac 1\alpha-\frac \eta \alpha\big)}\Big)\notag\\
&&+(t-v)^{-1+\varepsilon}\big((|y-\theta_{v,\tau}(\xi)|^\vartheta\wedge 1)+(|y-\theta_{v,\tau}(\xi)|^{1+\vartheta-\alpha}\wedge 1)\big) \Big]\\
&\le& C\bd_\eta(\bP_1(v),\bP_2(v)) \Big[ (|y-\theta_{v,\tau}(\xi)|^\eta\wedge 1) \Big((t-v)^{-1+\varepsilon}+(t-v)^{-\big(\frac 1\alpha-\frac \eta \alpha\big)}\Big)\notag\\
&&+(t-v)^{-1+\varepsilon}(|y-\theta_{v,\tau}(\xi)|^{1+\vartheta-\alpha}\wedge 1)\Big],\notag
\end{eqnarray}
recalling that since $\alpha\ge  1$, $1+\vartheta-\alpha\le \vartheta $ for the last inequality. From the definition in \eqref{BD_CTR_U_I_DELTA_MOD_HOLDER_GRADIENT} we thus get that, for $\varepsilon $ meant to be small,
\begin{equation}
\label{ALPHA_GE_1_EFF_REG_PIRE_POUR_CANCEL}
1+\vartheta-\alpha= 1-\alpha +\alpha\Big[1-\varepsilon-\big(\frac 1\alpha-\frac \eta\alpha\big)\Big]=\eta-\varepsilon \alpha.
\end{equation}
We thus obtain the following control
\begin{equation}\label{CTR_H_BSUP_HOLDER}
|H_{\bP_1,\bP_2}u_2^\delta(v,y)-H_{\bP_1,\bP_2}u_2^\delta(v,\theta_{v,\tau}(\xi))| \le C\bd_\eta(\bP_1(v),\bP_2(v)) |y-\theta_{v,\tau}(\xi)|^{\eta-\varepsilon \alpha} (t-v)^{-1+\varepsilon}.
\end{equation}

Hence, from \eqref{DIFF_WITH_CANCEL}, \eqref{CTR_H_BSUP_HOLDER}, 
Lemma \ref{NEW_LEMMA_R}
and \eqref{THE_CTR_DER_DENS_PROXY}, we get:
\begin{eqnarray}
|D_xw(s, x)|\label{THE_CTR_PERTURB_1_GRAD}\\
\le C\Big(\bd_{\eta,s,t}(\bP_1,\bP_2)\int_{s}^t \frac{dv}{(v-s)^{\frac 1\alpha}}(t-v)^{-1+\varepsilon}\int_{\R^d } dy |y-\theta_{v,\tau}(\xi)|^{\eta-\varepsilon \alpha} \bar q\big(v-s,y-m_{v,s}^{(\tau,\xi)}(x)\big) \notag\\
+\int_s^t \frac{dv}{(v-s)^{\frac 1\alpha}} \int_{\R^d} dy\Big(\|b(v,\cdot,\bP_1(v))\|_{\dot {\mathcal C}^{2\eta}}|D_y w(v, y)|\notag\\
+ \|\sigma(v,\cdot,\bP_1(v))\|_{{\mathcal C}^{2\eta}}\big[ 
\|D_xw(v,\cdot)\|_{{\mathcal C}^{\vartheta}}\I_{\alpha>1} 
+\|w(v,\cdot)\|_{{\mathcal C}^{1+\vartheta}}\I_{\alpha=1}\big]\Big)|y-\theta_{v,\tau}(\xi)|^{2\eta} \bar q\big(v-s,y-m_{v,s}^{(\tau,\xi)}(x)\big)\Big).\notag
\end{eqnarray}

Taking $(\tau,\xi)=(s,x) $, it follows again from \eqref{CORRES_LINEARIZED_FLOW_AND_FLOW} and Lemma \ref{DER_DENS_PROXY} that:
\begin{eqnarray}
|D_x w(s, x)|&\le& C\bigg(\bd_{\eta,s,t}(\bP_1,\bP_2) \int_s^t dv(v-s)^{-\frac 1\alpha+\frac \eta \alpha-\varepsilon} (t-v)^{-1+\varepsilon}\notag\\
&&+\int_s^t dv (v-s)^{-\frac 1\alpha+\frac{2\eta}{\alpha}}\Big( \|D_x w(v,\cdot)\|_\infty+\|D_xw(v,\cdot)\|_{\dot {\mathcal C}^{\vartheta}}+\|w(v,\cdot)\|_{\infty}\I_{\alpha=1}\Big)\bigg)\notag\\
&\le &C\bigg(\bd_{\eta,s,t}(\bP_1,\bP_2) (t-s)^{-\frac 1\alpha+\frac \eta \alpha}
\notag\\
&&+\int_s^t dv (v-s)^{-\frac 1\alpha+\frac{2\eta}{\alpha}}\Big(\|D_xw(v,\cdot)\|_\infty+\|D_x w(v,\cdot)\|_{\dot {\mathcal C}^{\vartheta}}+\|w(v,\cdot)\|_{\infty}\I_{\alpha=1}\Big)\bigg).\label{PREAL_CTR_ALPHA_GE_1_GRAD}
\end{eqnarray}
From  \eqref{PREAL_CTR_ALPHA_GE_1_GRAD}, we also need to estimate the gradient through a circular argument.

Let us now make a short comment before proceeding further. We cannot expect for the gradient of $w$ a better behaviour than the one provided by the \textit{main} term of the perturbative expansion \eqref{PREAL_CIRC}, i.e. the one associated with $ H_{\bP_1,\bP_2}u_2^\delta $. Observe now that 
we get: $-\frac 1\alpha+\frac{\eta}{\alpha}=:\zeta<0$. This exponent being negative, we cannot expect to have a pointwise bound for the gradient of $w$. Having in mind that we want to keep track in the above r.h.s. of the product of the distance $\bd_{\eta,s,t}(\bP_1,\bP_2) $ and a contribution of type $(t-s)^{\bar \zeta},\ \bar \zeta>0$, which provides a smoothing in time effect, we will therefore investigate the behavior of  
\begin{equation}\label{DEF_PHI}
\Phi(v):=(t-v)^{\Xi}\sup_{x\in \R^d}|Dw(v,x)|, \ \Xi:=\frac 1\alpha-\frac \eta\alpha+\frac \varepsilon 2,\ v\in [s,t],
\end{equation}
To investigate the explosion of the corresponding H\"older modulus, let us introduce as well 
 \begin{equation}\label{DEF_PSI}
 \Psi(v):=(t-v)^{\Xi+\frac\vartheta\alpha}\|Dw(v,\cdot)\|_{{\mathcal C}^\vartheta}, \ v\in [s,t].
 \end{equation}
 Let us note that $\sup_{v \in [s, t]} \Phi(v) < \infty$ and $\sup_{v \in [s, t]} \Psi(v) < \infty$. Indeed, from the definition of $H_{\bP_1,\bP_2}u_2^\delta $ in \eqref{PDE_W},  since from \cite{chau:meno:prio:19} $u_2^\delta \in L^\infty\big([0,T],C^{\alpha+2\eta}(\R^d,\R)\big)$ (with corresponding norm which a priori explodes with $\delta $), it holds that, if $b$ is bounded in space, $H_{\bP_1,\bP_2}u_2^\delta \in L^\infty\big([0,T],C^{2\eta \wedge (\alpha+2\eta-1)}(\R^d,\R)\big) $. Hence, the previously invoked Schauder estimates still apply for $w$ solving \eqref{PDE_W} (again with corresponding $L^\infty\big([0,T],C^{\alpha+2\eta\wedge (\alpha+2\eta-1)}(\R^d,\R)\big) $ norm exploding with $\delta $). If now $b$ is not bounded in space, recall that \textit{a priori}, we assumed in \A{B${}_H $} that $b(t,\cdot,\mu)\in \dot C^{2\eta}(\R^d,\R^d) $, we would then have $H_{\bP_1,\bP_2}u_2^\delta \in L^\infty\big([0,T],\dot C^{2\eta \wedge (\alpha+2\eta-1)}(\R^d,\R)\big) $. In that case, from the integrability constraint $\alpha>2\eta $, the arguments of \cite{chau:meno:prio:19} could be reproduced to derive the finiteness of $\sup_{v \in [s, t]} \Phi(v), \sup_{v \in [s, t]} \Psi(v) $ which involve the gradient which is in that case bounded whereas the function itself is not (see e.g. Krylov and Priola \cite{kryl:prio:10} for similar issues in the diffusive setting).

From the previous definitions, a key point, for our circular argument to work is that $\Xi+\frac \vartheta\alpha<1 $. Indeed, from \eqref{PREAL_CTR_ALPHA_GE_1_GRAD}, this is necessary in order to make the time singularities generated by $\Phi$ and $\Psi$ integrable. We now recall from the definition of $\vartheta $ in \eqref{BD_CTR_U_I_DELTA_MOD_HOLDER_GRADIENT} that:
 \begin{equation}\label{EQ_XI_PLUS_THETA}
 \Xi+\frac \vartheta\alpha=\frac 1\alpha-\frac \eta\alpha+\frac \varepsilon2+1-\varepsilon-(\frac 1\alpha-\frac \eta\alpha)=1-\frac \varepsilon2<1.
 \end{equation}

%
With these notations at hand, we can from \eqref{ALL_READY_FOR_CIRCULAR} get rid of the supremum norm $\|w\|_\infty $ appearing in the case $\alpha=1 $ and upper bound it in terms of the functions $\Phi,\Psi $. Namely, for $\varepsilon $ and $T$ small enough (so that $t-s<1/2$),
\begin{eqnarray}
\|w\|_\infty&\le& C\bigg(\bd_{\eta,s,t}(\bP_1,\bP_2)(t-s)^{\varepsilon}
+(t-s)^{}\|w\|_\infty\I_{\left\{\alpha=1\right\}}\notag\\
&&+\int_s^t dv (v-s)^{\frac{2\eta}{\alpha}}\Big((t-v)^{-\frac \vartheta\alpha}\Phi(v)+(t-v)^{-(\Xi+\frac \vartheta\alpha)}\Psi(v)\Big)\bigg).\notag\\
&\le &C\bigg(\bd_{\eta,s,t}(\bP_1,\bP_2)(t-s)^{\varepsilon}
+\sup_{v\in [s,t]}\Phi(v) + \sup_{v\in [s,t]}\Psi(v)\Bigg).
\label{ALL_READY_FOR_CIRCULAR_SUP}
\end{eqnarray}
 
 \noindent up to a modification of the constant $C$ for the last inequality.

 We now have to distinguish the \textit{diagonal} and \textit{off-diagonal} regimes w.r.t. the current considered times. Namely, for given $(x,x')\in (\R^d)^2$, if $|x-x'|\ge (t-s)^{\frac 1\alpha} $ we say that the \textit{off-diagonal} regime holds. In this case, we readily get from \eqref{PREAL_CTR_ALPHA_GE_1_GRAD} and \eqref{ALL_READY_FOR_CIRCULAR_SUP}:
\begin{eqnarray}
(t-s)^{\frac \vartheta\alpha}\frac{|D_x w(s, x)-D_x w(s, x')|}{|x-x'|^\vartheta}&\le& |Dw(s,x)-Dw(s,x')|\le |Dw(s,x)|+|Dw(s,x')|\notag\\ 
&\le &C\bigg(\bd_{\eta,s,t}(\bP_1,\bP_2) (t-s)^{\zeta}\notag\\
&&+\int_s^t dv (v-s)^{-\frac 1\alpha+\frac{2\eta}{\alpha}}\Big((t-v)^{- \Xi}\Phi(v)+(t-v)^{-(\Xi+\frac \vartheta\alpha)}\Psi(v)\Big)\bigg),\notag\\
(t-s)^{\Xi+\frac \vartheta\alpha}\frac{|D_x w(s, x)-D_x w(s, x')|}{|x-x'|^\vartheta}&\le &C\bigg(\bd_{\eta,s,t}(\bP_1,\bP_2) (t-s)^{\Xi+\zeta}+(t-s)^{1-\frac 1\alpha+\frac{2\eta}\alpha}\sup_{v\in [s,t]}\Phi(v)\notag\\
&&+(t-s)^{1-(\frac 1\alpha+\frac \vartheta\alpha)+\frac{2\eta}\alpha}\sup_{v\in [s,t]}\Psi(v)\bigg).\label{PREAL_CTR_ALPHA_GE_1_HOLD_HD}
\end{eqnarray}
Let us now turn to the global \textit{diagonal regime}, $|x-x'| \le (t-s)^{\frac 1\alpha}$. This case is more subtle. Indeed, due to the time integration, i.e. for $v\in [s,t]$, there is a \textit{local} off-diagonal regime, namely for $v\in [s,s +c_0|x-x'|^{\alpha}]$ for a constant $c_0 \in (0,1)$ to be specified, and then a \textit{true} \textit{diagonal regime} w.r.t. the integration variable $v$ which is the one providing the smoothing effects through the heat kernel. The delicate point is that, in order to exploit the off-diagonal bounds, the natural choice of freezing spatial point consists in considering precisely the  spatial point in argument of the function. On the other hand, in the \textit{true} diagonal regime the freezing spatial point must be the same for the two quantities to expand. This naturally induces to consider an intermediate  quantity for the regime change.

Introduce for a couple $(\tau,\xi')\in \R_+\times \R^d $ of freezing points,  the following \textit{Green kernel}. : 
\begin{eqnarray*}
\forall 0\le s<r\le t,\ x\in \R^d,\ \tilde G_{s,r}^{\bP_1,\alpha,(\tau,\xi')} f(s,x)&:=&\int_s^{r} dv\int_{\R^{d}} dy\,  \tilde p^{\bP_1,\alpha,(\tau,\xi')}(s,v, x,y)f(v,y). 
\end{eqnarray*}
For  $x,x'\in \R^d $ s.t. $|x-x'|^\alpha\le  (t-s)$ we can write, similarly to the proof of Proposition 12 in \cite{chau:meno:prio:19} {(Duhamel formula with change of freezing point)}, that:
\begin{eqnarray}
&& D_x w(s,x')\notag\\
&=&\big(D_x\tilde G_{s,\tau_0}^{\bP_1,\alpha,(\tau,\xi')} (H_{\bP_1,\bP_2}u_2^\delta(s,x')\big)_{(\tau_0,\tau,\xi')=(t_0,s,x')}+\big(D_x\tilde G_{\tau_0,t}^{\bP_1,\alpha,(\tau,\tilde \xi')} (H_{\bP_1,\bP_2}u_2^\delta)(s,x')\big)_{(\tau_0,\tau,\tilde \xi')=(t_0,s,x)}\notag\\
&&+\big(D_x\tilde P_{s,\tau_0}^{\bP_1\alpha,(\tau,\xi')} w(\tau_0,x')\big)_{(\tau_0,\tau,\xi')=(t_0, s, x')}-\big(D_x\tilde P_{s,\tau_0}^{\bP_1,\alpha,(\tau,\tilde \xi')} w(\tau_0, x')\big)_{(\tau_0,\tau,\tilde \xi')=(t_0, s, x)}\notag\\
&&+\int_s^t dv\, \int_{\R^{d}}dy\Bigg(\I_{\left\{v\le \tau_0\right\}}D_x\tilde p^{\bP_1,\alpha,(\tau,\xi')}(s, v, x', y) R_{\bP_1}^{\alpha,(\tau, \xi')}(v,y)\notag\\
&&\quad +\I_{\left\{v>\tau_0\right\}} D_x\tilde p^{\bP_1,\alpha,(\tau,\tilde\xi')}(s,v,x',y) R_{\bP_1}^{\alpha,(\tau,\tilde \xi')}(v,y)   \Bigg)_{(\tau_0,\tau,\xi',\tilde \xi')=(t_0, s,x',x)},
\label{INTEGRATED_DIFF_BXI_GRAD}
\end{eqnarray}
where
\begin{equation}\label{DEF_T_O}
t_0=s+c_0|x-x'|^\alpha.
\end{equation}

The point is now precisely to use the two expansions of the gradient in \eqref{PREAL_CIRC} (that we first differentiate w.r.t. $x$ choosing then $(\tau,\xi)=(s,x) $) and \eqref{INTEGRATED_DIFF_BXI_GRAD}, keeping in mind that the additional term $$\big(D_x\tilde P_{s,\tau_0}^{\bP_1\alpha,(\tau,\xi')} w(\tau_0,x')\big)_{(\tau_0,\tau,\xi')=(t_0,s,x')}-\big(D_x\tilde P_{s,\tau_0}^{\bP_1,\alpha,(\tau,\tilde \xi')} w(\tau_0, x')\big)_{(\tau_0,\tau,\tilde \xi')=(t_0,s,x)}, $$ arising from the change of freezing points precisely needs to be analyzed.

We get:
\begin{eqnarray}
&&D_x w(s,x)-D_xw(s,x')\notag\\
&=&
\Big(\int_s^t dv \I_{\left\{v\le \tau_0\right\}}\int_{\R^d }D_x\tilde  p^{\bP_1,\alpha,(\tau, \xi)}(s,v,x,y)\big(H_{\bP_1,\bP_2}u_2^\delta(v,y)+R_{\bP_1}^{\alpha,(\tau,\xi)}(v,y)\big)dy\notag\\
&&-\int_s^t dv \I_{\left\{v\le \tau_0\right\}}\int_{\R^d }D_x\tilde  p^{\bP_1,\alpha,(\tau, \xi')}(s,v,x',y)\big(H_{\bP_1,\bP_2}u_2^\delta(v,y)+R_{\bP_1}^{\alpha,(\tau,\xi')}(v,y)\big)dy\Big)\Big|_{(\tau_0,\tau,\xi,\xi')=(t_0,s,x,x')}\notag\\
&&-\Big(\big(D_x\tilde P_{s,\tau_0}^{\bP_1\alpha,(\tau,\xi')} w(\tau_0,x')\big)_{(\tau_0,\tau,\xi')=(t_0,s,x')}-\big(D_x\tilde P_{s,\tau_0}^{\bP_1,\alpha,(\tau,\tilde \xi')} w(\tau_0, x')\big)_{(\tau_0,\tau,\tilde \xi')=(t_0,s,x)}\Big)\notag\\
&&+\int_s^t dv \I_{\left\{v>\tau_0\right\}}\int_{\R^d }\big(D_x\tilde  p^{\bP_1,\alpha,(\tau, \xi)}(s, v, x, y) - D_x\tilde  p^{\bP_1,\alpha,(\tau,\xi)}(s,v,x',y) \big) \notag\\
&&\hspace*{2cm}\times\big(H_{\bP_1,\bP_2}u_2^\delta(v, y)+R_{\bP_1}^{\alpha,(\tau,\xi)}(v, y)\big)dy\Big|_{(\tau_0,\tau,\xi)=(t_0,s,x)}
\notag\\
&=:&
\Delta w_{1}(s,x,x')+\Delta w_{2}(s,x,x')+\Delta w_{3}(s,x,x')
.\notag\\
\label{DECOUP_HOLDER_MOD_DIAG}
 \end{eqnarray} 
 The term $\Delta w_{1}(s,x,x')$, corresponding to the local off-diagonal regime within the global diagonal one, can be analyzed as above. Proceeding as in  \eqref{CTR_H_BSUP_HOLDER}-\eqref{PREAL_CTR_ALPHA_GE_1_GRAD}, and with the bound of \eqref{ALL_READY_FOR_CIRCULAR_SUP} we get:
 \begin{eqnarray*}
&& |\Delta w_{1}(s,x,x')|\\
 &\le& \Big|\int_s^{\tau_0} dv \int_{\R^d }D_x\tilde  p^{\bP_1,\alpha,(\tau, \xi)}(s,v,x,y)\big([H_{\bP_1,\bP_2}u_2^\delta(v,y)-H_{\bP_1,\bP_2}u_2^\delta(v,\theta_{v,\tau}(\xi))]+R_{\bP_1}^{\alpha,(\tau,\xi)}(v,y)\big)dy\Big| \bigg|_{(\tau_0,\tau,\xi)=(t_0,s,x)}\notag\\
&&+\Big|\int_s^{\tau_0} dv \int_{\R^d }D_x\tilde  p^{\bP_1,\alpha,(\tau, \xi')}(s,v,x',y)\big([H_{\bP_1,\bP_2}u_2^\delta(v,y)-H_{\bP_1,\bP_2}u_2^\delta(v,\theta_{v,\tau}(\xi'))]+R_{\bP_1}^{\alpha,(\tau,\xi')}(v,y)\big)dy\Big|\bigg|_{(\tau_0,\tau,\xi')=(t_0,s,x')}\\
&\le &C\bigg(\int_s^{t_0}\frac{dv}{(v-s)^{\frac 1\alpha-(\frac \eta \alpha-\varepsilon)}} \bd_{\eta}(\bP_1(v),\bP_2(v))
(t-v)^{-1+\varepsilon}\\
&&+\int_s^{t_0} dv (v-s)^{-\frac 1\alpha+\frac{2\eta}{\alpha}}\big[(t-v)^{-\Xi}\Phi(v)+(t-v)^{-(\Xi+\frac \vartheta\alpha)}\Psi(v)
\big]\frac{|x-x'|^\vartheta}{(v-s)^{\frac \vartheta \alpha}}\bigg).
 \end{eqnarray*}
 From \eqref{DEF_T_O}, we then get that for $v\in [s,t_0]$, $(t-v)\ge (1-c_0)(t-s) $ and therefore, recalling from \eqref{DEF_T_O} that $|x-x'|^\vartheta=(c_0^{-1}(t_0-s))^{\frac \vartheta\alpha} $:
\begin{eqnarray*}
 \frac{|\Delta w_{1}(s,x,x')|}{|x-x'|^\vartheta}&\le &C\bigg((t-s)^{-1+\varepsilon }\bd_{\eta,s,t}(\bP_1,\bP_2)(t_0-s)^{1+(\frac \eta\alpha-\varepsilon)-(\frac 1\alpha+\frac \vartheta\alpha)}
 \\
&&
+(t-s)^{-\Xi}\int_s^{t_0} dv (v-s)^{-\frac 1\alpha+\frac{2\eta}{\alpha}}\Phi(v)\frac{1}{(v-s)^{\frac \vartheta \alpha}}
\\
&&+(t-s)^{-(\Xi+\frac \vartheta\alpha)}\int_s^{t_0} dv (v-s)^{-\frac 1\alpha+\frac{2\eta}{\alpha}}\Psi(v)\frac{1}{(v-s)^{\frac \vartheta \alpha}}
\bigg)\\
&\le &C\bigg((t-s)^{-1+\varepsilon }\bd_{\eta,s,t}(\bP_1,\bP_2)
 \\
&&+(t-s)^{-\Xi+1-\frac 1\alpha+\frac{2\eta}{\alpha}-\frac \vartheta \alpha}\sup_{v\in [s,t]}\Phi(v)
+(t-s)^{-(\Xi+\frac \vartheta\alpha)+1-\frac 1\alpha+\frac{2\eta}{\alpha}-\frac \vartheta \alpha}\sup_{v\in [s,t]}\Psi(v)\bigg),
 \end{eqnarray*}
 recalling for the last inequality that:
 \begin{equation}\label{BDINTEGR_ANA_DIAG}
 1+(\frac \eta\alpha-\varepsilon)-(\frac1\alpha+\frac \vartheta\alpha)=1+(\frac \eta \alpha-\varepsilon)-\big(\frac 1\alpha+1-\varepsilon-(\frac 1\alpha-\frac \eta\alpha)\big)=0.
 \end{equation}
  This eventually gives:
 \begin{eqnarray*}
(t-s)^{\Xi+\frac \vartheta\alpha} \frac{|\Delta w_{1}(s,x,x')|}{|x-x'|^\vartheta}&\le &C\Big((t-s)^{\Xi+\frac \vartheta \alpha- 1+\varepsilon }\bd_{\eta,s,t}(\bP_1,\bP_2)
 \notag\\
&&+(t-s)^{1-\frac 1\alpha+\frac{2\eta}{\alpha}}\sup_{v\in [s,t]}\Phi(v)+(t-s)^{1-(\frac 1\alpha+\frac \vartheta \alpha)+\frac{2\eta}{\alpha}}\sup_{v\in [s,t]}\Psi(v)\Big). \end{eqnarray*}

\noindent Recalling from \eqref{DEF_PHI} that $\Xi:=\frac 1\alpha-\frac \eta \alpha+\frac \varepsilon 2$, from \eqref{BDINTEGR_ANA_DIAG} we get:
\begin{equation}\label{DEF_BAR_ZETA}
\Xi+\frac \vartheta \alpha- 1+\varepsilon=(\frac 1\alpha+\frac \vartheta \alpha)-\frac \eta\alpha-1 +\frac 32 \varepsilon=1-\varepsilon+\frac \eta\alpha-\frac \eta\alpha-1 +\frac 32 \varepsilon=\frac \varepsilon 2=:\bar \zeta.
\end{equation} 
Hence
 \begin{eqnarray}
(t-s)^{\Xi+\frac \vartheta\alpha} \frac{|\Delta w_{1}(s,x,x')|}{|x-x'|^\vartheta}&\le &C\Big((t-s)^{\bar \zeta}\bd_{\eta,s,t}(\bP_1,\bP_2)
 \notag\\
&&+(t-s)^{1-\frac 1\alpha+\frac{2\eta}{\alpha}}\sup_{v\in [s,t]}\Phi(v)+(t-s)^{1-(\frac 1\alpha+\frac \vartheta \alpha)+\frac{2\eta}{\alpha}}\sup_{v\in [s,t]}\Psi(v)\Big).\label{CTR_HOLDER_DIAG_HORS_DIAG_1}
 \end{eqnarray}

Turning now to $\Delta w_{3} $ in \eqref{DECOUP_HOLDER_MOD_DIAG}, expanding the frozen densities, exploiting as well \eqref{THE_CTR_DER_DENS_PROXY_DIAG_PERTURB} which gives that a diagonal perturbation of the density does not affect the related estimates, we write:
\begin{eqnarray*}
 |\Delta w_{3}(s,x,x')|&\le& \Big|\int_{\tau_0}^t dv\int_{0}^1 d\lambda  \int_{\R^d }D^2_x \tilde  p^{\bP_1,\alpha,(\tau, \xi)}(s, v, x+\lambda (x'-x),y)(x'-x)\\
 &&\times \big([H_{\bP_1,\bP_2}u_2^\delta(v, y)-H_{\bP_1,\bP_2}u_2^\delta(v,\theta_{v,\tau}(\xi))]+R_{\bP_1}^{\alpha,(\tau,\xi)}(v, y)\big)dy\Big| \bigg|_{(\tau_0,\tau,\xi)=(t_0, s, x)}\notag\\
&\le &C\bigg(|x-x'| \bd_{\eta, s,t }(\bP_1,\bP_2)\int_{t_0}^t\frac{dv}{(v-s)^{\frac 2\alpha-(\frac \eta\alpha-\varepsilon)}}(t-v)^{-1+\varepsilon}\\
&&+|x-x'|^{\vartheta}\int_{t_0}^t dv (v-s)^{-\frac 2\alpha+\frac{2\eta}{\alpha}}\Big((t-v)^{-\Xi}\Phi(v)+ (t-v)^{-(\Xi+\frac \vartheta\alpha)}\Psi(v)+
\Big)|x-x'|^{1-\vartheta}\bigg),
\end{eqnarray*}
where we also used \eqref{CTR_DIST_OP_HOLDER}, for $\gamma=\eta,\lambda=\frac 12$, and \eqref{DOM_DIST} for the last inequality. Splitting the first time integral in the right-hand side of the above inequality into the two disjoint intervals $[t_0, \frac{t+s}{2}]$ and $(\frac{t+s}{2}, t]$, noting that in the current diagonal regime, for $c_0 \in (0,\frac12)$, $t_0= s+ c_0 |x-x'|^\alpha \leq (t+s)/2$, we get (recall that we are \textit{far} from the singularity):
$$
\int_{t_0}^{\frac{t+s}{2}} dv (v-s)^{-\frac 2\alpha+(\frac \eta\alpha-\varepsilon)} (t-v)^{-1+\varepsilon} \leq C (t-s)^{-1+\varepsilon}{(t_0-s)^{-\frac 2\alpha+(\frac \eta \alpha-\varepsilon)+1}}
$$  

\noindent and
$$
\int_{\frac{t+s}{2}}^{t} dv (v-s)^{-\frac 2\alpha+(\frac \eta\alpha-\varepsilon)} (t-v)^{-1+\varepsilon} \leq C(t-s)^{\varepsilon}{(t-s)^{-\frac 2\alpha+(\frac \eta \alpha-\varepsilon)}}= C (t-s)^{-\frac 2\alpha+\frac \eta \alpha}.
$$
Recalling that, for $v\in [t_0,t], |x-x'|\le \big((v-s)/c_0\big)^{\frac 1\alpha} $, we derive:
\begin{eqnarray*}
 |\Delta w_{3}(s,x,x')|&\le& 
C\bigg(\bd_{\eta,s,t}(\bP_1,\bP_2)|x-x'|[(t-s)^{-1+\varepsilon}{(t_0-s)^{-\frac 2\alpha+(\frac \eta \alpha-\varepsilon)+1}} + (t-s)^{-\frac 2\alpha+\frac \eta \alpha}] \\
&&+|x-x'|^\vartheta c_0^{\frac{\vartheta-1}{\alpha}}\int_{t_0}^t dv (v-s)^{-\frac 1\alpha-\frac{\vartheta}{\alpha}+\frac{2\eta}{\alpha}}\big[(t-v)^{-\Xi}\Phi(v)+(t-v)^{-(\Xi+\frac \vartheta\alpha)}\Psi(v)
\big]\bigg). 
 \end{eqnarray*}

\noindent Recall, from \eqref{BDINTEGR_ANA_DIAG}, $- \frac 2\alpha+(\frac \eta \alpha-\varepsilon)+1=\frac \vartheta\alpha-\frac 1\alpha$. This yields $(t_0-s)^{-\frac 2\alpha+(\frac \eta \alpha-\varepsilon)+1}=(c_0^{\frac 1\alpha} |x-x'|)^{-1+\vartheta} $. Also in the current diagonal regime $|x-x'|^{1-\vartheta} (t-s)^{-\frac{2}{\alpha}+\frac{\eta}{\alpha}}\leq (t-s)^{\frac{1-\vartheta}{\alpha}-\frac{2}{\alpha}+\frac{\eta}{\alpha}} = (t-s)^{-\frac{\vartheta}{\alpha}- \frac{1}{\alpha}+\frac{\eta}{\alpha}}$. Eventually, since from \eqref{BDINTEGR_ANA_DIAG} and \eqref{DEF_BAR_ZETA} the other time singularities are integrable, we get: 
\begin{eqnarray}
(t-s)^{\Xi+\frac \vartheta \alpha}\frac{ |\Delta w_{3}(s,x,x')|}{|x-x'|^\vartheta}&\le & C\bigg(\bd_{\eta,s,t}(\bP_1,\bP_2) [{c_0^{\frac{\vartheta-1}{\alpha}}}(t-s)^{\Xi+\frac \vartheta \alpha-1+\varepsilon}+(t-s)^{\Xi-\frac{1}{\alpha}+\frac{\eta}{\alpha}}]\notag\\
&&+{c_0^{\frac{\vartheta-1}{\alpha}}}\Big[
(t-s)^{1-\frac 1\alpha+\frac {2\eta}\alpha}\sup_{v\in [s,t]} \Phi(v)+(t-s)^{1-(\frac1\alpha+\frac\vartheta\alpha)+\frac {2\eta}\alpha}\sup_{v\in [s,t]} \Psi(v)\Big]\bigg).\label{CTR_HOLDER_DIAG_HORS_DIAG_3}
\end{eqnarray}

The term $\Delta w_{2}(s,x,x')$ in \eqref{DECOUP_HOLDER_MOD_DIAG}, corresponding to the change of freezing point remains to be analyzed.
A key point for this contribution is to note from \eqref{CORRESP_SHIFTED}-\eqref{FROZEN_FLOW} {(affine structure in $x'$ of $m_{\tau_0,s}^{(\tau,\xi')}(x') $)} that:
\begin{eqnarray}
\label{CORRESP_DER}
D_x \tilde p^{\bP_1,\alpha,(\tau,\xi')}(s,\tau_0,x',y)&=&D_x\Big( p_{\Theta_{s,\tau_0,(\tau,\xi'),\bP_1}}\big(\tau_0-s,y-m_{\tau_0,s}^{(\tau,\xi')}(x')\big)\Big)\notag\\
&=&D_x\Big( p_{\Theta_{s,\tau_0,(\tau,\xi'),\bP_1}}\big(\tau_0-s, y-x'+\int_s^{\tau_0} b(r,\theta_{r,\tau}(\xi'),\bP_1(r)) dr\big)\Big)\notag
\\
&=&-D_y\Big( p_{\Theta_{s,\tau_0,(\tau,\xi'),\bP_1}}\big(\tau_0-s, y-x'+\int_s^{\tau_0}b(r,\theta_{r,\tau}(\xi'),\bP_1(r)) dr\big)\Big)\notag\\
&=&-D_y \tilde p^{\bP_1,\alpha,(\tau,\xi')}(s,\tau_0,x',y).
\end{eqnarray}
 
Hence, with the notation of \eqref{FROZEN_FLOW}, we get:
\begin{eqnarray*}
&&\Delta w_{2}(s,x,x')\\
&=&\Big(\int_{\R^d} dy \,  \tilde p^{\bP_1,\alpha,(\tau,\xi')}(s,\tau_0,x',y)D_y w (\tau_0,y)-\int_{\R^d} dy \,   \tilde p^{\bP_1,\alpha,(\tau,\tilde \xi')}(s,\tau_0,x',y)D_y w (\tau_0,y)\Big)_{(\tau,\tau_0,\xi',\tilde \xi')=(s,t_0,x',x)}\\
&=&\Big(\int_{\R^d} dy\,   \tilde p^{\bP_1,\alpha,(\tau,\xi')}(s,\tau_0,x',y)\Big[D_y w (\tau_0,y)-D_yw(\tau_0,\tilde m_{s,\tau_0}^{\tau,\xi'}(x'))\Big]\Big)_{(\tau,\tau_0,\xi')=(s,t_0,x')}\\
&&+\Big(D_yw(\tau_0,\tilde m_{s,\tau_0}^{\tau,\xi'}(x'))-D_yw(\tau_0,\tilde m_{s,\tau_0}^{\tau,\tilde \xi'}(x'))\Big)_{(\tau,\tau_0,\xi',\tilde \xi')=(s,t_0,x',x)}\\
&&-\Big(\int_{\R^d} dy  \, \tilde p^{\bP_1,\alpha,(\tau,\tilde \xi')}(s,\tau_0,x',y)\Big[D_yw (\tau_0,y)-D_yw(\tau_0,\tilde m_{s,\tau_0}^{\tau,\tilde \xi'}(x'))\Big]\Big)_{(\tau,\tau_0,\tilde \xi')=(s,t_0,x)},\\
\end{eqnarray*}

\noindent recalling that  $\int_{\R^d} dy \, \tilde p^{\bP_1,\alpha,(\tau,\xi')}(s,\tau_0,x',y)=1$ and $\int_{\R^d} dy\tilde p^{\bP_1,\alpha,(\tau,\tilde \xi')}(s,\tau_0,x',y)=1 $ for the last equality.

From \eqref{CORRESP_SHIFTED}, {Lemma \ref{DER_DENS_PROXY} and the definitions in \eqref{DEF_PSI}}, we again derive:
\begin{eqnarray*}
|\Delta w_{2}(s,x,x')|&\le& C\sum_{z\in \{x,x'\}}\int_{\R^d} dy\,  \bar q(t_0-s,y-\tilde m_{s,t_0}^{s,z}(x'))|y-\tilde m_{s,t_0}^{s,z}(x')|^{\vartheta}\Psi(t_0)(t-t_0)^{-(\Xi+\frac\vartheta\alpha)}\\
&&+|\tilde m_{s,t_0}^{s,x'}(x')-\tilde m_{s,t_0}^{s,x}(x')|^{\vartheta}\Psi(t_0)(t-t_0)^{-(\Xi+\frac\vartheta\alpha)}\\
&\le& \Psi(t_0)(t-t_0)^{-(\Xi+\frac\vartheta\alpha)}\Big( C|t_0-s|^{\frac{\vartheta}{\alpha}}+\Big|\int_s^{t_0}(b(v,\theta_{v,s}(x'),\bP_1(v))-b(v,\theta_{v,s}(x),\bP_1(v)))dv\Big|^{\vartheta}\Big).
\end{eqnarray*}
Now, from Lemma 11 in \cite{chau:meno:prio:19} we have $|\theta_{v,s}(x')-\theta_{v,s}(x)|\le  C(|x-x'|+(v-s)^{\frac 1\alpha})$ and therefore:
\begin{eqnarray*}
\Big|\int_s^{t_0}(b(v,\theta_{v,s}(x'),\bP_1(v))-b(v,\theta_{v,s}(x),\bP_1(v)))dv\Big|&\le&
C\int_s^{t_0}|\theta_{v,s}(x')-\theta_{v,s}(x)|^{2\eta} dv\\
&\le& C(t_0-s)(|x-x'|+(t_0-s)^{\frac 1\alpha})^{2\eta}\\
&\le&C c_0|x-x'|^{\alpha+2\eta}\le C c_0|x-x'|,
\end{eqnarray*}
recalling that, $\alpha+2\eta>1$ and since we are in the diagonal regime and that the time $t-s\le 1 $ we indeed have $|x-x'|\le 1 $.
This finally gives:
\begin{eqnarray*}
|\Delta w_{2}(s,x,x')|\le C\Psi(t_0)(t-t_0)^{-(\Xi+\frac\vartheta\alpha)}c_0^{\frac \vartheta\alpha}|x-x'|^\vartheta.
\end{eqnarray*}
Recalling that $t-t_0=t-s- (t_0-s)\ge t-s-c_0|x-x'|^\alpha\ge (1-c_0)(t-s) $, this finally yields:
\begin{equation}
\label{CTR_HOLDER_DIAG_HORS_DIAG_2}
(t-s)^{\Xi+\frac{\vartheta}{\alpha}}\frac{|\Delta w_{2}(s,x,x')|}{|x-x'|^\vartheta}\le Cc_0^{\frac\vartheta\alpha}(1-c_0)^{-(\Xi+\frac{\vartheta}{\alpha})}\Psi(t_0).
\end{equation}

Plugging \eqref{CTR_HOLDER_DIAG_HORS_DIAG_1}, \eqref{CTR_HOLDER_DIAG_HORS_DIAG_3} and \eqref{CTR_HOLDER_DIAG_HORS_DIAG_2} into \eqref{DECOUP_HOLDER_MOD_DIAG}, we get that in the diagonal case $|x-x'|\le (t-s)^{\frac 1\alpha} $ and with the definition of $\bar \zeta=\frac \varepsilon2 $ in \eqref{DEF_BAR_ZETA}:
\begin{eqnarray}
(t-s)^{\Xi+\frac{\vartheta}{\alpha}}\frac{|D_x w(s,x)-D_x w(s, x')|}{|x-x'|^\vartheta}&\le& C\Big({(1+c_0^{\frac{\vartheta-1}\alpha})\Big[}\bd_{\eta,s,t}(\bP_1,\bP_2) (t-s)^{\bar \zeta}+(t-s)^{1-\frac 1\alpha+\frac {2\eta}\alpha}\sup_{v\in [s,t]} \Phi(v)\notag\\
&&+(t-s)^{1+\frac {2\eta}\alpha-(\frac1\alpha+\frac\vartheta\alpha)}\sup_{v\in [s,t]} \Psi(v)+
{\Big]}
+c_0^{\frac\vartheta\alpha}(1-c_0)^{-(\Xi+\frac{\vartheta}{\alpha})}\Psi(t_0)\Big)\label{BD_HOLDER_DIAG_FINAL_ALPHA_GE_1}.
\end{eqnarray}
Putting together \eqref{BD_HOLDER_DIAG_FINAL_ALPHA_GE_1} and \eqref{PREAL_CTR_ALPHA_GE_1_HOLD_HD}, {corresponding respectively to the control of the normalized H\"older modulus of the gradient in the off-diagonal and diagonal regimes}, we eventually derive:
\begin{eqnarray}
\Psi(s)&\le& C\Big(\bd_{\eta,s,t}(\bP_1,\bP_2) {(1+c_0^{\frac{\vartheta-1}\alpha})}(t-s)^{\bar \zeta}+{(1+c_0^{\frac{\vartheta-1}\alpha})}(t-s)^{1-\frac 1\alpha+\frac {2\eta}\alpha}\sup_{v\in [s,t]} \Phi(v)
\notag\\
&&+\sup_{r\in [s,t]}\Psi(r)\big({(1+c_0^{\frac{\vartheta-1}\alpha})}(t-s)^{1-\frac {1+\vartheta}\alpha+\frac{2\eta}\alpha}+c_0^{\frac\vartheta\alpha}(1-c_0)^{-(\Xi+\frac{\vartheta}{\alpha})}\big)
\Big)\label{TO_PSI}.
\end{eqnarray}
Equation \eqref{TO_PSI} could be established similarly for $s$ replaced by any $v\in [s,t] $ in the above l.h.s. This in turn yields, taking ${T\ge} t-s$ and $c_0$ small enough {s.t. $ (1+c_0^{\frac{\vartheta-1}{\alpha}})T^{\bar \zeta}$ is also \textit{small}}, up to a modification of $C$: 
\begin{equation}
\sup_{v\in [s,t]} \Psi(v)\le C{(1+c_0^{\frac{\vartheta-1}\alpha})}\Big(\bd_{\eta,s,t}(\bP_1,\bP_2) (t-s)^{\bar \zeta}+(t-s)^{1-\frac 1\alpha+\frac {2\eta}\alpha}\sup_{v\in [s,t]} \Phi(v)
\Big).\label{TO_PSI_2}
\end{equation}
The point is now to plug \eqref{TO_PSI_2} into \eqref{PREAL_CTR_ALPHA_GE_1_GRAD} to complete the circular argument. {Recalling as well that $1-(\frac 1\alpha+\frac\vartheta\alpha)+\frac{2\eta}\alpha=\frac \eta\alpha-\varepsilon$, this yields}:
\begin{eqnarray*}
|D_xw(s,x)|
&\underset{{\eqref{PREAL_CTR_ALPHA_GE_1_GRAD}}}{\le} &C\bigg(\bd_{\eta,s,t}(\bP_1,\bP_2) (t-s)^{-\frac 1\alpha+\frac \eta \alpha }
+\sup_{v\in [s,t]}\Phi(v)(t-s)^{1-\frac 1\alpha+\frac{2\eta}\alpha-\Xi}+\sup_{v\in [s,t]}\Psi(v)(t-s)^{1-\frac 1\alpha+\frac{2\eta}\alpha-(\Xi+\frac \vartheta \alpha)}
\bigg)
\\
&\underset{{\eqref{TO_PSI_2}}}{\le} & C\bigg(\bd_{\eta,s,t}(\bP_1,\bP_2) {\Big[}(t-s)^{-\frac 1\alpha+\frac \eta \alpha }{+(1+c_0^{\frac{\vartheta-1}\alpha})(t-s)^{\bar \zeta+\frac \eta\alpha-\varepsilon-\Xi}\Big]}\\
&&+\sup_{v\in [s,t]}\Phi(v){\Big[}(t-s)^{1-\frac 1\alpha+\frac{2\eta}\alpha-\Xi}+{(1+c_0^{\frac{\vartheta-1}\alpha})(t-s)^{(1-\frac 1\alpha+\frac{2\eta}\alpha)+\frac \eta\alpha-\varepsilon-\Xi}\Big]}
\bigg).
\label{PREAL_CTR_ALPHA_GE_1_GRAD_2}
\end{eqnarray*}
Again the previous equation would also hold for any $v\in [s,t] $ instead of $s$ in the previous l.h.s.
Thus,  from the definition on $\Xi $ in \eqref{DEF_PHI} and {$\bar \zeta $ in \eqref{DEF_BAR_ZETA}}, 
\begin{eqnarray*}
\sup_{v\in [s,t]}\Phi(v)&:=&\sup_{v\in [s,t]}(v-s)^{\Xi}|D_xw(s,x)|\le C{(1+c_0^{\frac{\vartheta-1}\alpha})}\bigg(\bd_{\eta,s,t}(\bP_1,\bP_2) (t-s)^{\bar \zeta}\\
&&+\sup_{v\in [s,t]}\Phi(v)(t-s)^{1-\frac 1\alpha+\frac{2\eta}\alpha}
\bigg).
\end{eqnarray*}
{Taking again} ${T\ge} t-s$ and $c_0$ small enough {and s.t. $ (1+c_0^{\frac{\vartheta-1}{\alpha}})T^{\bar \zeta}$ is sufficiently \textit{small}}, we derive:
\begin{eqnarray}
\sup_{v\in [s,t]}\Phi(v) \le C{(1+c_0^{\frac{\vartheta-1}\alpha})}
\bd_{\eta,s,t}(\bP_1,\bP_2) (t-s)^{\bar \zeta}
\label{TO_PHI}.
\end{eqnarray}
Plugging first \eqref{TO_PHI} into \eqref{TO_PSI_2}, we get:
\begin{equation}
\sup_{v\in [s,t]} \Psi(v)\le C{(1+c_0^{\frac{\vartheta-1}\alpha})^2}
\bd_{\eta,s,t}(\bP_1,\bP_2) (t-s)^{\bar \zeta}
.\label{TO_PSI_3}
\end{equation}

It now remains to plug \eqref{TO_PHI} and \eqref{TO_PSI_3} into \eqref{ALL_READY_FOR_CIRCULAR}-\eqref{ALL_READY_FOR_CIRCULAR_SUP}
\begin{eqnarray}
|w(s,x)|&\le& \|w\|_\infty\le C\bigg(\bd_{\eta,s,t}(\bP_1,\bP_2)(t-s)^{\varepsilon}\notag\\
&&+\int_s^t dv (v-s)^{\frac{2\eta}{\alpha}}\Big((t-v)^{-\Xi} \Phi(v)+(t-v)^{-(\Xi+\frac \vartheta\alpha)}\Psi(v)
\Big)\bigg)\notag\\
&\le& C{(1+c_0^{\frac{\vartheta-1}\alpha})^2}\bd_{\eta,s,t}(\bP_1,\bP_2) (t-s)^{\varepsilon}.
\label{FINAL_SMOOTHING_BOUND}
\end{eqnarray}
Observe indeed carefully that, even though the time contribution in front of the distance in \eqref{TO_PHI} and \eqref{TO_PSI} appears at a coarser rate, namely $(t-s)^{\bar \zeta}=(t-s)^{\frac \varepsilon 2} $, since we take out the supremum of $\Phi ,\Psi$  and integrate once again in time in the above first inequality, we eventually get a final contribution in $(t-s)^\varepsilon $ for the distance (from the previous definitions of $\Xi $, $\vartheta $ in \eqref{DEF_PHI} and \eqref{BD_CTR_U_I_DELTA_MOD_HOLDER_GRADIENT} respectively). This also illustrates the intuitive fact that the remainder terms in the perturbative expansion, those associated with $\Phi,\Psi $, yield somehow negligible contributions.

Equation \eqref{FINAL_SMOOTHING_BOUND} precisely gives the expected bound \eqref{THE_PREAL_CTR_ON_W} (with ${\tilde \zeta=\varepsilon } $) and completes the proof of Proposition \ref{PROP_STAB} in the case $\alpha\ge 1 $.


\subsubsection*{The case $\alpha< 1 $}
Restarting from \eqref{ALL_READY_FOR_CIRCULAR}, we write:
\begin{eqnarray}
|w(s,x)|&\le& C\bigg(\bd_{\eta,s,t}(\bP_1,\bP_2)(t-s)^{ 1-(\frac{1}{\alpha}-\frac{\eta}{\alpha})}+(t-s)\|w\|_\infty
+\int_s^t dv \, (v-s)^{\frac{2\eta}{\alpha}} \|Dw(v,\cdot)\|_\infty\bigg)
\end{eqnarray}
\noindent so that
\begin{eqnarray}
\|w\|_\infty&\le& C\bigg(\bd_{\eta,s,t}(\bP_1,\bP_2)(t-s)^{ 1-(\frac{1}{\alpha}-\frac{\eta}{\alpha})}
+\int_s^t dv \, (v-s)^{\frac{2\eta}{\alpha}} \|Dw(v,\cdot)\|_\infty\bigg),
\label{PREAL_CIR_SUPER_CRITICAL}
\end{eqnarray}

\noindent assuming that $T\ge t-s$ small enough.
Observe first that, for the first term of the above r.h.s., we have assumed that $(\frac{1}{\alpha}-\frac{\eta}{\alpha})<1\iff \alpha+\eta>1  $. This indeed allows to derive the smoothing effect for the contribution $H_{\bP_1,\bP_2}u_2^\delta $.

Let us now proceed  from the cancellation argument as in \eqref{DIFF_WITH_CANCEL}. We get from equations \eqref{CTR_DIST_OP} and \eqref{CTR_DIST_OP_HOLDER} with $\lambda=1/2$ (recalling that $\gamma=\eta$) {and \eqref{DOM_DIST}} that in the current case:
 \begin{eqnarray}\label{CTR_H_BSUP_HOLDER_alpha_LE_1_WITH_ALL}
|H_{\bP_1,\bP_2}u_2^\delta(v,y)-H_{\bP_1,\bP_2}u_2^\delta(v,\theta_{v,\tau}(\xi))|&\le& C\bd_\eta(\bP_1(v),\bP_2(v)) \Big[ |y-\theta_{v,\tau}(\xi)|^\eta (t-v)^{-\big(\frac 1\alpha-\frac \eta \alpha\big)}\notag\\
&&+|y-\theta_{v,\tau}(\xi)|^\vartheta(t-v)^{-1+\varepsilon} \Big]\notag\\
&\le & {C\bd_\eta(\bP_1(v),\bP_2(v))|y-\theta_{v,\tau}(\xi)|^{\alpha-1+\eta-\alpha\varepsilon}(t-v)^{-1+\varepsilon},}
\end{eqnarray}
{as soon as $\varepsilon \le 1-(\frac 1\alpha-\frac \eta \alpha) $},
recalling from \eqref{BD_CTR_U_I_DELTA_MOD_HOLDER_GRADIENT} that $\vartheta=\alpha\Big[1-\varepsilon-\big(\frac 1\alpha-\frac \eta\alpha\big)\Big]$ and $\alpha<1 $ for the last inequality.
Now, similarly to \eqref{PREAL_CTR_ALPHA_GE_1_GRAD}, using  \eqref{CTR_H_BSUP_HOLDER_alpha_LE_1_WITH_ALL} and again Lemma \ref{DER_DENS_PROXY}, we also get:
\begin{eqnarray}
|D_x w(s, x)|&\le& C\Big(\int_s^t dv \, (v-s)^{-\frac 1\alpha+1-\frac1\alpha+\frac \eta \alpha-\varepsilon}\bd_\eta(\bP_1(v),\bP_2(v)) (t-v)^{-1+\varepsilon}\notag\\
&&
+\int_s^t dv \, (v-s)^{-\frac 1\alpha+\frac{2\eta}{\alpha}}\big(\|w\|_\infty+\|Dw(v,\cdot)\|_\infty\big)\Big)\notag\\
&\underset{\eqref{PREAL_CIR_SUPER_CRITICAL}}{\le} &C\Big(\bd_{\eta,s,t}(\bP_1,\bP_2) (t-s)^{1-(\frac 2\alpha-\frac \eta \alpha)}
+\int_s^t dv \, (v-s)^{-\frac 1\alpha+\frac{2\eta}{\alpha}}\|Dw(v,\cdot)\|_\infty\Big),\label{CTR_GRAD_ALPHA_SUPER_CRITICAL}
\end{eqnarray}
provided that 
\begin{equation}\label{COND_SUPER_CRITIQUE}2-\frac 2\alpha+\frac \eta \alpha>0\iff  \alpha >1-\frac \eta 2
\end{equation}
in order to obtain integrable time singularities in the first integral (taking as well $\varepsilon $ small enough). Recalling that $\eta \in (0,1/2] $, and that we have also assumed that $\alpha>2\eta $ for integrability purposes, this means that $\alpha>2\eta \vee (1-\frac \eta 2) $.

With the notation \eqref{DEF_PHI} for $\big(\Phi(v)\big)_{v\in [s,t]} $,  taking in our current supercritical case  $\Xi=\frac 1\alpha-\frac \eta{2 \alpha} <1$ (from \eqref{COND_SUPER_CRITIQUE}), we derive from 
\eqref{PREAL_CIR_SUPER_CRITICAL} and \eqref{CTR_GRAD_ALPHA_SUPER_CRITICAL}:
\begin{eqnarray}
|w(s,x)|&\le& \|w\|_\infty\le C\bigg(\bd_{\eta,s,t}(\bP_1,\bP_2)(t-s)^{ 1-(\frac{1}{\alpha}-\frac{\eta}{\alpha})}
+\sup_{v\in [s,t]}\Phi(v)\int_s^t dv \, (v-s)^{\frac{2\eta}{\alpha}}(t-v)^{-\Xi} \bigg)\notag\\
&\le& C\bigg(\bd_{\eta,s,t}(\bP_1,\bP_2)(t-s)^{ 1-(\frac{1}{\alpha}-\frac{\eta}{\alpha})}
+\sup_{v\in [s,t]}\Phi(v) (t-s)^{1+2\frac \eta\alpha-\Xi} \bigg),
\label{PREAL_CIR_SUPER_CRITICAL_2}
\end{eqnarray}
and 
\begin{eqnarray*}
|D_x w(s,x)|
&\le &C\Big(\bd_{\eta,s,t}(\bP_1,\bP_2) (t-s)^{1-(\frac 2\alpha-\frac \eta \alpha)}
+\sup_{v\in [s,t]}\Phi(v)\int_s^t dv \, (v-s)^{-\frac 1\alpha+\frac{2\eta}{\alpha}}(t-v)^{-\Xi}\Big)\\
&\le &C\Big(\bd_{\eta,s,t}(\bP_1,\bP_2) (t-s)^{1-(\frac 2\alpha-\frac \eta \alpha)}
+\sup_{v\in [s,t]}\Phi(v)(t-s)^{1-(\frac 1\alpha-\frac{2\eta}\alpha)-\Xi}\Big), 
\end{eqnarray*}
which in turn gives:
\begin{eqnarray*}
(t-s)^{\Xi}|D_x w(s,x)|\le C\Big(\bd_{\eta,s,t}(\bP_1,\bP_2) (t-s)^{1-(\frac 2\alpha-\frac \eta \alpha)+\Xi}
+\sup_{v\in [s,t]}\Phi(v)(t-s)^{1-(\frac 1\alpha-\frac{2\eta}\alpha)}\Big).
\end{eqnarray*}
Observe now that for the previous choice of $\Xi $ and from \eqref{COND_SUPER_CRITIQUE}, $1-(\frac 2\alpha-\frac \eta \alpha)+\Xi=1-(\frac 1\alpha-\frac \eta{2 \alpha})>0 $, so that we indeed have a regularizing effect. Furthermore, the above equation would still be valid with $s $ replaced by any $\tilde v\in [s,t]$ for the l.h.s. This yields that for all $\tilde v\in [s,t] $
\begin{eqnarray}
\Phi(\tilde v)\le C\Big(\bd_{\eta,s,t}(\bP_1,\bP_2) (t-s)^{1-(\frac 1\alpha-\frac \eta {2\alpha})}
+\sup_{v\in [s,t]}\Phi(v)(t-s)^{1-(\frac 1\alpha-\frac{2\eta}\alpha)}\Big) \notag
\end{eqnarray}

\noindent which  yields
\begin{eqnarray}
\sup_{v\in [s,t]}\Phi( v)\le C
\bd_{\eta,s,t}(\bP_1,\bP_2) (t-s)^{1-(\frac 1\alpha-\frac \eta {2\alpha})}
\label{CTR_PHI_SUPER_CRITICAL}
\end{eqnarray}
Plugging \eqref{CTR_PHI_SUPER_CRITICAL} into \eqref{PREAL_CIR_SUPER_CRITICAL_2} eventually yields:
\begin{equation*}
\|w\|_\infty
\le C\bd_{\eta,s,t}(\bP_1,\bP_2)(t-s)^{ 1-(\frac{1}{\alpha}-\frac{\eta}{2\alpha})}.
\end{equation*}
This concludes the proof of Proposition \ref{PROP_STAB} in the super-critical case under the condition $\alpha > 2\eta \vee (1 -\frac{\eta}{2})$. {This naturally gives the constraint on the stability index $\alpha\in (4/5,1)$}. As already mentioned, in Appendix \ref{sec:modification:proof}, we remove this assumption and establish our main result, namely, Theorem \ref{THM_WP_NL} under the sole condition $\alpha > 2\eta \vee (1-\eta)$.

\appendix

\section{Additional controls on the density in the stable case}
\label{APP_STABLE_CORRESPONDENCE}

To derive the first main results of Lemma \ref{DER_DENS_PROXY}, the key idea consists in separating the small and the large jumps in the Fourier transform of the proxy $\Theta_{s,v,(\tau,\xi),\bmu} $ introduced in \eqref{DEF_THETHA_RV_FROZEN}. This \textit{trick}, which is often known as the It\^o-L\'evy decomposition, has been used in many contexts (see e.g. \cite{wata:07}, \cite{szto:10} 
 or \cite{huan:meno:16} for density estimates; \cite{huan:meno:prio:19} for $L^p$ bounds related to degenerate stably driven Ornstein-Uhlenbeck processes).

Fix now a final parameter $t$. Rewrite from \eqref{DEF_THETHA_RV_FROZEN},  for any $\zeta \in \R^d $:
\begin{eqnarray}
\varphi_{\Theta_{s,t,(\tau,\xi),\bmu}} (\zeta)&:=&\E\Big[\exp(i\langle \zeta,\Theta_{s,t,(\tau,\xi),\bmu}\rangle)\Big]=\exp\bigg(\int_s^t dv \int_{0}^{+\infty} \frac{dr}{r^{1+\alpha}}\int_{{\mathbb S}^{d-1}}\omega(d\lambda) \Big( \cos(\langle \zeta,\sigma\big(v,\theta_{v,\tau}(\xi),\bmu(v)\big)\lambda r\rangle ) -1\Big)\bigg) \notag\\
&=&\exp\bigg(\int_s^t dv \int_{0}^{(t-s)^{\frac 1\alpha}} \frac{dr}{r^{1+\alpha}}\int_{{\mathbb S}^{d-1}}\omega(d\lambda) \Big( \cos(\langle \zeta,\sigma\big(v,\theta_{v,\tau}(\xi),\bmu(v)\big)\lambda r\rangle ) -1\Big)\bigg)\notag\\
&&\times \exp\bigg(\int_s^t dv \int_{(t-s)^{\frac 1\alpha}}^{+\infty} \frac{dr}{r^{1+\alpha}}\int_{{\mathbb S}^{d-1}}\omega(d\lambda) \Big( \cos(\langle \zeta,\sigma\big(v,\theta_{v,\tau}(\xi),\bmu(v)\big)\lambda r\rangle ) -1\Big)\bigg)\notag\\
&=:&
\widehat p_{M_{s,t,(\tau,\xi),\bmu}} (\zeta)\widehat P_{N_{s,t,(\tau,\xi),\bmu}} (\zeta).
\label{ITO_LEVY_CUT}
\end{eqnarray}
It precisely allows to write:
  $\Theta_{s,t,(\tau,\xi),\bmu}:=M_{s,t,(\tau,\xi),\bmu}+N_{s,t,(\tau,\xi),\bmu}$
where $M_{s,t,(\tau,\xi),\bmu}$ and $N_{s,t,(\tau,\xi),\bmu} $ are independent random variables    corresponding respectively to the small and large jumps part of the stochastic integral $\Theta_{s,t,(\tau,\xi),\bmu} $. 
The density of $\Theta_{s,t,(\tau,\xi),\bmu}$  writes for all $y\in \R^d $ as
\begin{equation}
\label{DECOMP_G_P_DENS}
p_{\Theta_{s,t,(\tau,\xi),\bmu}}(y)=\int_{\R^d} p_{M_{s,t,(\tau,\xi),\bmu}}(y-{z})P_{N_{s,t,(\tau,\xi),\bmu}}(d{z}).
\end{equation}
It is known, see e.g. Lemma B.1 in \cite{huan:meno:16}, Lemma A.2 in \cite{huan:meno:prio:19}, that the density $p_{M_{s,t,(\tau,\xi),\bmu}} $ associated with the small jumps of the stochastic integral is smooth and has a polynomial decay of arbitrary order (i.e. it belongs to the Schwartz class).
On the other hand, $P_{N_{s,t,(\tau,\xi),\bmu}}$ is a Poisson measure s.t. for all $\beta\in (0,\alpha) $, there exists $C_{\beta,\alpha}\ge 1 $ s.t. $\E[|N_{s,t,(\tau,\xi),\bmu}|^{\beta}] =\int_{\R^d} |{z}|^\beta P_{N_{s,t,(\tau,\xi),\bmu}}(d{z})\le C_{\alpha,\beta} (t-s)^{\frac\beta \alpha} $.
Importantly, both components $p_{M_{s,t,(\tau,\xi),\bmu}}$ and $P_{N_{s,t,(\tau,\xi),\bmu}}$ (when the jump measure $\nu $ is absolutely continuous) can be dominated uniformly w.r.t. the freezing parameters $(\tau,\xi),\bmu $. For the sake of clarity, and since these steps are also crucial to study further the sensitivities of the quantities $p_{M_{s,t,(\tau,\xi),\bmu}}$, $P_{N_{s,t,(\tau,\xi),\bmu}}$ w.r.t. the measure argument, we prove these facts in the next subsection.

\subsection{First controls for the two parts of the frozen density}
We first give here some useful lemmas concerning the behavior of the laws of the independent random variables $ M_{s,t,(\tau,\xi),\bmu}$ and $N_{s,t,(\tau,\xi),\bmu} $ s.t.
$ \Theta_{s,t,(\tau,\xi),\bmu}=M_{s,t,(\tau,\xi),\bmu}+N_{s,t,(\tau,\xi),\bmu}$.
\begin{lem}[Density estimate on the Martingale part and associated derivatives.]\label{EST_DENS_MART}
For all $m\ge 1$, there exists $C_m\ge 1$ s.t. for all $0\le s<t\le T, y\in\R^{d}$, $(\tau,\xi)\in [0,T]\times \R^d $,
\begin{equation}\label{THE_BD_PM}
p_{M_{s,t,(\tau,\xi),\bmu}}(y) \le \frac{C_m}{ (t-s)^{\frac d\alpha}} \left( 1+ \frac{|y|}{(t-s)^{\frac 1\alpha}}\right)^{-m}=:\tilde C_m p_{\bar M_{s,t},m}(y),
\end{equation}
where $\int_{\R^d} p_{\bar M_{s,t},m}(y) dy=1$.

Also, for all $m\ge 1 $ and all multi-index $\beta ,\ |\beta|\le 3$,
$$|D_y^\beta p_{M_{s,t,(\tau,\xi),\bmu}}(y)|\le \frac{C_m}{ (t-s)^{\frac {d+|\beta|}\alpha}} \left( 1+ \frac{|y|}{(t-s)^{\frac {1}\alpha}}\right)^{-m}=\frac{\tilde C_m}{(t-s)^{\frac{|\beta|}\alpha}} p_{\bar M_{s,t},m}(y).$$
\end{lem}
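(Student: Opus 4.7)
The plan is to exploit the explicit form of the Fourier transform $\widehat p_{M_{s,t,(\tau,\xi),\bmu}}$ provided by \eqref{ITO_LEVY_CUT} through the Fourier inversion formula
$$p_{M_{s,t,(\tau,\xi),\bmu}}(y)=(2\pi)^{-d}\int_{\R^d} e^{-i\langle y,\zeta\rangle}\widehat p_{M_{s,t,(\tau,\xi),\bmu}}(\zeta)\,d\zeta.$$
The two main ingredients will be a sub-Gaussian type upper bound on $|\widehat p_M|$ at the stable scale $(t-s)^{-1/\alpha}$, uniform in the freezing parameters $(\tau,\xi)$ and the measure flow $\bmu$, together with estimates on the $\zeta$-derivatives of $\widehat p_M$.

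First I would establish the characteristic function bound
$$|\widehat p_{M_{s,t,(\tau,\xi),\bmu}}(\zeta)|\le \exp\bigl(-c(t-s)|\zeta|^\alpha\bigr)\quad\text{whenever }|\zeta|(t-s)^{1/\alpha}\ge 1,$$
with the trivial bound by $1$ otherwise, where $c>0$ depends only on the structural constants appearing in the assumptions. This follows from the pointwise inequality $1-\cos(u)\ge c(u^2\wedge 1)$ valid for all $u\in\R$: plugging it into the explicit exponent of $\widehat p_M$, splitting the $r$-integral according to the threshold $r\simeq 1/|\langle\zeta,\sigma\lambda\rangle|$, and invoking the non-degeneracy of the spherical measure \A{ND} together with the uniform ellipticity \A{UE} (which yield $\int_{\mathbb{S}^{d-1}}|\langle\zeta,\sigma\lambda\rangle|^\alpha\omega(d\lambda)\ge c|\zeta|^\alpha$) produces the decay at the announced $\alpha$-stable scale. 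The constants remain uniform in the time-dependent coefficient $\sigma(v,\theta_{v,\tau}(\xi),\bmu(v))$ since only its upper and lower bounds enter the estimate.

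From this bound, the uniform estimate $|D_y^\beta p_M(y)|\le C(t-s)^{-(d+|\beta|)/\alpha}$ for $|\beta|\le 3$ follows by splitting the Fourier inversion integral (with an extra factor $(i\zeta)^\beta$ coming from the $y$-derivative) over $\{|\zeta|(t-s)^{1/\alpha}\le 1\}$ and its complement. To upgrade to the polynomial decay in $|y|$, I would perform integration by parts: for any multi-index $\beta'$,
$$y^{\beta'}D_y^\beta p_{M_{s,t,(\tau,\xi),\bmu}}(y) = i^{|\beta|-|\beta'|}(2\pi)^{-d}\int_{\R^d} e^{-i\langle y,\zeta\rangle}\partial_\zeta^{\beta'}\bigl(\zeta^\beta \widehat p_M(\zeta)\bigr)\,d\zeta.$$
Writing $\widehat p_M=\exp(\Psi_M)$ and applying Fa\`a di Bruno's formula reduces the problem to estimating the derivatives $\partial_\zeta^\gamma\Psi_M$ for $1\le|\gamma|\le|\beta'|$; splitting the integral defining $\partial_\zeta^\gamma\Psi_M$ according to the two regimes $r|\langle\zeta,\sigma\lambda\rangle|\le 1$ and $r|\langle\zeta,\sigma\lambda\rangle|\ge 1$ and using $|\sin u|\le |u|\wedge 1$ and $|\cos u-1|\le (u^2/2)\wedge 2$ yields a control of the form $|\partial_\zeta^\gamma\Psi_M(\zeta)|\le C(t-s)^{|\gamma|/\alpha}$ at the relevant scale $|\zeta|(t-s)^{1/\alpha}\ge 1$. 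Combining with the exponential decay of $\widehat p_M$ and rescaling $\zeta=(t-s)^{-1/\alpha}\tilde\zeta$ in the resulting Fourier integral produces the estimate $|y|^m |D_y^\beta p_M(y)|\le C_m(t-s)^{(m-d-|\beta|)/\alpha}$, which combined with the $L^\infty$ bound above is equivalent to the announced \eqref{THE_BD_PM} and its derivative version.

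The main obstacle will be the first-order case $|\gamma|=1$: a crude bound $|\sin|\le 1$ generates a non-integrable singularity at $r=0$ in the integral defining $\partial_\zeta \Psi_M$, and one must instead use the sharper $|\sin u|\le |u|$ near zero, relying on the symmetry of the spherical measure $\omega$ to handle the remaining contribution cleanly. Carefully tracking the dependence on $\sigma(v,\theta_{v,\tau}(\xi),\bmu(v))$ through the Fa\`a di Bruno expansion is what ensures that all the constants remain uniform in $(\tau,\xi,\bmu)$, as required by the lemma's statement.
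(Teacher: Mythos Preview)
Your proposal is correct and follows essentially the same strategy as the paper: Fourier inversion, showing the (rescaled) characteristic function lies in the Schwartz class uniformly in the freezing parameters, and then reading off the polynomial decay. One cosmetic difference is that the paper performs the change of variable $\tilde\zeta=(t-s)^{1/\alpha}\zeta$ at the outset and works with the normalized symbol $\hat f_{s,t}$, whereas you carry the scale through and rescale only at the end. A more substantive but still minor difference is in how the exponential bound on $|\widehat p_M|$ is obtained: you use the elementary inequality $1-\cos u\ge c(u^2\wedge 1)$ directly on the truncated integral, while the paper instead \emph{completes} the exponent by adding back the large-jump contribution (at the cost of a bounded multiplicative factor $\exp(2(t-s)\nu(B(0,(t-s)^{1/\alpha})^c))$) so as to recover the exact stable symbol and invoke \A{ND} in closed form. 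Both routes yield the same decay $\exp(-c|\tilde\zeta|^\alpha)$. Note also that your stated bound $|\partial_\zeta^\gamma\Psi_M|\le C(t-s)^{|\gamma|/\alpha}$ should in general carry an extra polynomial factor in $|\tilde\zeta|=(t-s)^{1/\alpha}|\zeta|$ (this is exactly the $(1+|\tilde\zeta|^l)$ in the paper's bound for $|D^\beta\hat f_{s,t}|$); this is harmless since it is absorbed by the exponential, but worth stating precisely.
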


\begin{proof}Inverting  the Fourier transform in \eqref{ITO_LEVY_CUT} write:
\begin{eqnarray*}
p_{M_{s,t,(\tau,\xi),\bmu}}(y)= \frac{1}{(2\pi)^{d}} \int_{\R^{d}}d\zeta e^{-i\langle \zeta,y \rangle}\exp\bigg( \int_s^t dv \int_{0}^{(t-s)^{\frac 1\alpha}} \frac{dr}{r^{1+\alpha}}\int_{{\mathbb S}^{d-1}}\omega(d\lambda) \Big( \cos(\langle \zeta,\sigma\big(v,\theta_{v,\tau}(\xi),\bmu(v)\big)\lambda r\rangle ) -1\Big) \bigg).
\end{eqnarray*}
Setting $(t-s)^{1/\alpha}\zeta ={\tilde \zeta}$ yields:
\begin{eqnarray} \label{EXP_DENS_M}
p_{M_{s,t,(\tau,\xi),\bmu}}(y)= \frac{1}{(2\pi)^{d}} (t-s)^{-\frac d\alpha}\int_{\R^{d}}d{\tilde \zeta} e^{-i\langle {\tilde \zeta},\frac{y}{(t-s)^{\frac 1\alpha}} \rangle}\notag\\
\times \exp\bigg( \int_s^t dv \int_{0}^{(t-s)^{\frac 1\alpha}} \frac{dr}{r^{1+\alpha}}\int_{{\mathbb S}^{d-1}}\omega(d\lambda) \Big( \cos(\langle \frac {\tilde \zeta}{(t-s)^{\frac1\alpha}},\sigma\big(v,\theta_{v,\tau}(\xi),\bmu(v)\big)\lambda r\rangle ) -1\Big) \bigg).
\end{eqnarray}
Let us now denote 
$$\hat{f}_{s,t}({\tilde \zeta}) :=\exp\bigg( \int_s^t dv \int_{0}^{(t-s)^{\frac 1\alpha}} \frac{dr}{r^{1+\alpha}}\int_{{\mathbb S}^{d-1}}\omega(d\lambda) \Big( \cos(\langle \frac {\tilde \zeta}{(t-s)^{\frac1\alpha}},\sigma\big(v,\theta_{v,\tau}(\xi),\bmu(v)\big)\lambda r\rangle ) -1\Big) \bigg).$$
Since the L\'evy measure in the above expression has finite support, we get from  Theorem 3.7.13 in Jacob \cite{Jacob1} that $\hat f_{s,t}$ is infinitely differentiable as a function of ${\tilde \zeta}$.
Moreover,  
\begin{eqnarray*}
|D \hat f_{s,t}({\tilde \zeta})|&\le& \int_s^t dv \int_{0}^{(t-s)^{\frac 1\alpha}} \frac{dr}{r^{1+\alpha}}\int_{{\mathbb S}^{d-1}}\omega(d\lambda) \frac{|\sigma\big(v,\theta_{v,\tau}(\xi),\bmu(v)\big)\lambda| r}{(t-s)^{\frac 1\alpha}}\Big| \sin(\langle \frac {\tilde \zeta}{(t-s)^{\frac1\alpha}},\sigma\big(v,\theta_{v,\tau}(\xi),\bmu(v)\big)\lambda r\rangle ) \Big| \\
&&\times \exp\bigg( \int_s^t dv \int_{0}^{(t-s)^{\frac 1\alpha}} \frac{dr}{r^{1+\alpha}}\int_{{\mathbb S}^{d-1}}\omega(d\lambda) \Big( \cos(\langle \frac {\tilde \zeta}{(t-s)^{\frac1\alpha}},\sigma\big(v,\theta_{v,\tau}(\xi),\bmu(v)\big)\lambda r\rangle ) -1\Big) \bigg).
\end{eqnarray*}
Write now:
\begin{eqnarray*}
&&\int_s^t dv \int_{0}^{(t-s)^{\frac 1\alpha}} \frac{dr}{r^{1+\alpha}}\int_{{\mathbb S}^{d-1}}\omega(d\lambda) \frac{|\sigma\big(v,\theta_{v,\tau}(\xi),\bmu(v)\big)\lambda| r}{(t-s)^{\frac 1\alpha}}\Big| \sin(\langle \frac {\tilde \zeta}{(t-s)^{\frac1\alpha}},\sigma\big(v,\theta_{v,\tau}(\xi),\bmu(v)\big)\lambda r\rangle ) \Big|\\ 
&\le& C(t-s)\int_{r\le (t-s)^{\frac 1\alpha}}  \frac{dr}{r^{1+\alpha}}\frac{r}{(t-s)^{\frac 1 \alpha}}   (\I_{\alpha<1}+ \I_{\alpha\ge 1}|{\tilde \zeta}| \frac{r}{(t-s)^{\frac 1\alpha}})\le C(1+|{\tilde \zeta}|).
\end{eqnarray*}
Thus:
\begin{eqnarray*}
|D \hat f_{s,t}({\tilde \zeta})|
&\le & C(1+|{\tilde \zeta}|) \exp\left(  \int_s^t dv\int_{\R^{d}} \{\cos( \langle \sigma\big(v,\theta_{v,\tau}(\xi),\bmu(v)\big) q , \frac{{\tilde \zeta}}{(t-s)^{\frac 1\alpha}} \rangle)  -1\}\nu(dq)\right) \\ 
&&\times \exp(2(t-s)\nu(B(0,(t-s)^{\frac 1\alpha})^c))
\le 
C (1+|{\tilde \zeta}|)\exp(-C^{-1}|{\tilde \zeta}|^\alpha),\ C\ge 1,
\end{eqnarray*}
using for the last inequality  that from \eqref{NON_DEG_STABLE} 
\begin{eqnarray*}
&&\exp\left(  \int_s^t dv\int_{\R^{d}} \{\cos( \langle \sigma\big(v,\theta_{v,\tau}(\xi),\bmu(v)\big) q , \frac{{\tilde \zeta}}{(t-s)^{\frac 1\alpha}} \rangle)  -1\}\nu(dq)\right)\\
&=&\exp\left(-C_{\alpha,d}  \int_s^t dv\int_{{\mathbb S}^{d-1}} | \langle \sigma\big(v,\theta_{v,\tau}(\xi),\bmu(v)\big) \lambda , \frac{{\tilde \zeta}}{(t-s)^{\frac 1\alpha}} \rangle|^\alpha \omega(d\lambda)\right)\\
&\le& C\exp(-C^{-1}|{\tilde \zeta}|^\alpha),
\end{eqnarray*}
and also that, from the decomposition of $\nu $, $\nu(B(0,(t-s)^{1/\alpha})^c) \le C/(t-s)$.

Similarly, for all multi-index $\beta $ of length $|\beta|=l\in \N $: 
\begin{eqnarray*}
|D^\beta \hat f_{s,t}({\tilde \zeta})|&\le& 
C_l (1+|{\tilde \zeta}|^l)\exp(-C^{-1}|{\tilde \zeta}|^\alpha),\ C_l\ge 1.
\end{eqnarray*}
Thus, $\hat f_{s,t}$ belongs the Schwartz space. 
Denoting by $f_{s,t}$ its inverse Fourier transform, we have:
$$
\forall m \ge 0, \ \forall y \in \R^{d}, \exists C_m\ge 1\ s.t.: |f_{s,t}(y)| \le C_m (1+|y|)^{-m}.
$$
Now, since $p_M(t-s,y) = (t-s)^{-\frac d\alpha} f_{s,t}(y/(t-s)^{\frac 1\alpha})$, the announced bound follows. The control concerning the derivatives is derived similarly.

\end{proof}

\begin{lem}[Controls on the Poisson measure]\label{EST_POISSON}

Let $\nu$ be any symmetric stable jump measure satisfying \eqref{NON_DEG_STABLE}. Then, for all $\beta\in [0,\alpha) $, there exists  $C_{\alpha,\beta}\ge 1$ s.t. for all $0\le s<t, z\in \R^{d}$, 
\begin{equation}\label{GLOB_INT_POISSON}
\int_{\R^d} |y|^{\beta}P_{N_{s,t,(\tau,\xi),\bmu}}(dy) \le C_{\alpha,\beta} (t-s)^{\frac \beta\alpha}.
\end{equation}

Assume now the jump measure $\nu  $ is absolutely continuous, i.e. Assumption \A{AC} holds. Then, there exists a Poisson measure $P_{\bar N_{\rho_{s,t}}}:= \exp(-1)\sum_{n\ge 0}\frac{\rho_{s,t}^{\star n}}{n!} $ where $\rho_{s,t}(dy)=\kappa (t-s) \frac{dr }{r^{1+\alpha}}\I_{|r|\ge c(t-s)^{\frac 1\alpha}}\Lambda_{{\mathbb S}^{d-1}}(d\theta),\ y=r\theta, \ (r,\theta)\in \R_*^{+}\times {\mathbb S}^{d-1} $, with $c>0$ depending on the non-degeneracy constants in \eqref{NON_DEG_STABLE}, \eqref{COND_UE_S}, is a probability   measure and $\rho_{s,t}^{\star n} $ denotes its $n^{{\rm th}}$ fold convolution,  
for which it holds that:
\begin{equation}
P_{N_{s,t,(\tau,\xi),\bmu}}\le CP_{\bar N_{\rho_{s,t}}}.\label{DEF_POISSON_QUI_DOMINE}
\end{equation}

\end{lem}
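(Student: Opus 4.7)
The plan is to exploit the compound Poisson representation of $N_{s,t,(\tau,\xi),\bmu}$ provided by the Itô-Lévy decomposition \eqref{ITO_LEVY_CUT}. Its Lévy measure reads
\[
\mu_N(dy) = \int_s^t \bigl(\sigma(v,\theta_{v,\tau}(\xi),\bmu(v))_{\ast}\,\nu|_{\{|q|\ge(t-s)^{1/\alpha}\}}\bigr)(dy)\,dv,
\]
and the corresponding law is the compound Poisson $P_{N_{s,t,(\tau,\xi),\bmu}} = e^{-\|\mu_N\|}\sum_{n\ge 0}\mu_N^{\star n}/n!$. Using the polar decomposition \eqref{stab_decomp} and \A{UE}, the total mass $\|\mu_N\|=\omega(\mathbb{S}^{d-1})/\alpha$ is a constant independent of $(t-s)$, $(\tau,\xi)$ and $\bmu$; this uniformity is what drives both estimates.

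For the moment bound \eqref{GLOB_INT_POISSON}, I would begin with the direct radial computation, using $|\sigma q|\le \Lambda^{1/2}|q|$ from \A{UE}:
\[
\int_{\R^d}|y|^\beta \mu_N(dy) \le \Lambda^{\beta/2}\,\omega(\mathbb{S}^{d-1})\int_s^t\! dv\int_{(t-s)^{1/\alpha}}^\infty r^{\beta-\alpha-1}\,dr = C_{\alpha,\beta}(t-s)^{\beta/\alpha},
\]
which is finite thanks to $\beta<\alpha$. To pass from a Lévy moment to a compound Poisson moment, I use the elementary inequality $\int|y|^\beta \mu^{\star n}(dy)\le n^{\beta\vee 1}\|\mu\|^{n-1}\int|y|^\beta\,\mu(dy)$, which follows from subadditivity of $y\mapsto|y|^\beta$ when $\beta\le 1$, and from Jensen's inequality applied to the i.i.d.\ sum representation with jump distribution $\mu/\|\mu\|$ when $\beta\in(1,\alpha)$. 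Summing against the Poisson weights $e^{-\|\mu_N\|}/n!$ yields an absolutely convergent series (since $\|\mu_N\|$ is bounded) and produces \eqref{GLOB_INT_POISSON}.

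For the domination \eqref{DEF_POISSON_QUI_DOMINE} under \A{AC}, I would first derive a pointwise upper bound on the density of $\mu_N$. The change of variables $y=\sigma(v,\theta_{v,\tau}(\xi),\bmu(v))q$, combined with the explicit form \eqref{EXPR_FOR_F} of $f$ and the uniform ellipticity \A{UE}, gives
\[
\mu_N(dy) \le C_1(t-s)\,\frac{\mathbf{1}_{\{|y|\ge c(t-s)^{1/\alpha}\}}}{|y|^{d+\alpha}}\,dy
\]
for some $c,\,C_1$ depending only on $\Lambda$ and $\|g\|_\infty$. Rewriting $\rho_{s,t}$ in Cartesian coordinates via $\frac{dr\,d\theta_{\mathrm{Sph}}}{r^{1+\alpha}}=\frac{dy}{|y|^{d+\alpha}}$, the probability normalization $\|\rho_{s,t}\|=1$ forces $\kappa=\alpha c^\alpha/|\mathbb{S}^{d-1}|$, and an appropriate choice of $c$ in the admissible range yields the pointwise inequality $\mu_N\le \rho_{s,t}$. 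The passage to the compound Poisson laws is then a Poisson superposition argument: writing $\rho_{s,t}=\mu_N+(\rho_{s,t}-\mu_N)$ with $\rho_{s,t}-\mu_N\ge 0$, the convolution identity $P_{\bar N_{\rho_{s,t}}} = P_{N_{s,t,(\tau,\xi),\bmu}}\star\,\mathrm{CP}(\rho_{s,t}-\mu_N)$, together with the fact that $\mathrm{CP}(\rho_{s,t}-\mu_N)$ carries an atom of mass $e^{-\|\rho_{s,t}-\mu_N\|}$ at the origin, yields
\[
P_{N_{s,t,(\tau,\xi),\bmu}}(dy) \le e^{\|\rho_{s,t}-\mu_N\|}\,P_{\bar N_{\rho_{s,t}}}(dy),
\]
with constant uniform in all parameters, which is \eqref{DEF_POISSON_QUI_DOMINE}.

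The main obstacle, which amounts essentially to bookkeeping, is the simultaneous choice of $c$ and $\kappa$ in the definition of $\rho_{s,t}$: the parameter $c$ must be small enough for the support of $\rho_{s,t}$ to contain that of $\mu_N$ (which is governed by $\Lambda^{-1/2}(t-s)^{1/\alpha}$), yet large enough for the forced normalization $\kappa=\alpha c^\alpha/|\mathbb{S}^{d-1}|$ to still exceed the density constant $C_1$ above. Carefully tracking the dependence of $C_1$ on $\|g\|_\infty$ and $\Lambda$, and if needed loosening the radial threshold in order to accommodate both constraints, is what eventually fixes the constant $C$ in \eqref{DEF_POISSON_QUI_DOMINE}.
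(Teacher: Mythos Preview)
Your argument for \eqref{GLOB_INT_POISSON} is essentially the paper's: compound Poisson expansion of $P_N$, the single-jump $\beta$-moment computed radially, and the passage to $n$-fold convolutions via $\E\big|\sum_1^n X_i\big|^\beta\le n^{\beta\vee 1}\E|X_1|^\beta$. For \eqref{DEF_POISSON_QUI_DOMINE} you also follow the paper's route---a pointwise density bound on $\mu_N$ via the change of variable $y=\sigma q$ under \A{AC} and \A{UE}---and your Poisson-superposition step (exploiting the atom of $\mathrm{CP}(\rho_{s,t}-\mu_N)$ at the origin) is a clean way to pass from L\'evy-measure domination to compound-Poisson domination; the paper is terser here, recording only $\zeta_{s,t,(\tau,\xi),\bmu}\le C\rho_{s,t}$ and asserting the conclusion from the series \eqref{EXPR_LAW_POISSON}.

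The obstacle you flag at the end is real and is not resolved by ``loosening the threshold''. The total mass $\|\mu_N\|=\omega({\mathbb S}^{d-1})/\alpha$ is fixed by the data and may exceed $1$, in which case \emph{no} probability measure can dominate $\mu_N$, regardless of how $c$ is tuned: the support constraint forces $c\le\Lambda^{-1/2}$ while the density constraint combined with $\|\rho_{s,t}\|=1$ forces $c^\alpha\ge C_1|{\mathbb S}^{d-1}|/\alpha$, and nothing in the assumptions makes these ranges overlap. The simple fix---and it is all the downstream use in Lemma~\ref{DER_DENS_PROXY} requires---is to drop the normalization: choose $\kappa$ large enough that $\mu_N\le\rho_{s,t}$ pointwise (with $c$ small enough for the supports), accept that $\|\rho_{s,t}\|$ is a bounded constant possibly larger than $1$, set $P_{\bar N_{\rho_{s,t}}}:=e^{-\|\rho_{s,t}\|}\sum_{n\ge 0}\rho_{s,t}^{\star n}/n!$, and run your superposition argument verbatim to obtain $P_N\le e^{\|\rho_{s,t}\|-\|\mu_N\|}P_{\bar N_{\rho_{s,t}}}$ with a constant uniform in all parameters.
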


\begin{proof}
Introduce $\bar \nu_{s,t}(dz)=\nu(dz)\I_{|z|\ge (t-s)^{\frac 1\alpha}} $ which is a finite measure and s.t. 
$ \bar \nu_{s,t}(\R^d)\le C(t-s)^{-1}$. With this notation at hand, write:
\begin{eqnarray*}
\widehat P_{N_{s,t,(\tau,\xi),\bmu}}(\zeta)&=&\exp\bigg(\int_s^t dv \int_{(t-s)^{\frac 1\alpha}}^{+\infty} \frac{dr}{r^{1+\alpha}}\int_{{\mathbb S}^{d-1}}\omega(d\lambda) \Big( \cos(\langle \zeta,\sigma\big(v,\theta_{v,\tau}(\xi),\bmu(v)\big)\lambda r\rangle ) -1\Big)\bigg)=\\
&=&\exp( \int_{s}^t dv \widehat {\bar \nu}_{s,t}(\sigma\big(v,\theta_{v,\tau}(\xi),\bmu(v)\big)^*\zeta)- (t-s) \bar \nu_{s,t}(\R^d))\\
&=&\exp( \int_{s}^t dv \widehat {\bar \nu}_{s,t,(v,(\tau,\xi),\bmu)}(\zeta)- (t-s) \bar \nu_{s,t}(\R^d)),
\end{eqnarray*}
defining $\bar \nu_{s,t,(v,(\tau,\xi),\bmu)}(A):=\bar \nu_{s,t}(\{y\in \R^d:\sigma\big(v,\theta_{v,\tau}(\xi),\bmu(v)\big)y \in A \} )$ and where $\widehat {\bar \nu}_{s,t}$, $\widehat {\bar \nu}_{s,t,(v,(\tau,\xi),\bmu)} $ denote the Fourier-Stieltjes transform of the considered measure. Hence, introducing the measure  $\zeta_{s,t,(\tau,\xi),\bmu}:=\int_{s}^t dv  {\bar \nu}_{s,t,(v,(\tau,\xi),\bmu)} $ and expanding the previous exponential and by termwise Fourier inversion, we get:
\begin{equation}
P_{N_{s,t,(\tau,\xi),\bmu}}=\exp(\zeta_{s,t,(\tau,\xi),\bmu} 
-(t-s) \bar \nu_{s,t}(\R^d))=\exp(-(t-s)\bar \nu_{s,t}(\R^d))\sum_{n\ge 0}\frac{\big( \zeta_{s,t,(\tau,\xi),\bmu}
\big)^{\star n}}{n!},\label{EXPR_LAW_POISSON}
\end{equation}
where in the above equation we again used that, for a finite measure $\rho  $ on $\R^d $, $(\rho)^{\star n}:= \underset{n\ {\rm times}}{\rho\star\cdots \star \rho}$ denotes its $n^{{\rm th}}$ fold convolution. 
Observe now from the definition of $\bar \nu_{s,t} $ and the non-degeneracy conditions on $\sigma $ (see eq. \eqref{COND_UE_S} in \A{UE}) that there exists $C\ge 1$ s.t. for all $0\le s<t,z\in \R^d,\ \bmu\in {\mathcal C}(\R^+,{\mathcal P}(\R^d)) $,  $C^{-1}\le \zeta_{s,t,(\tau,\xi),\bmu}(\R^d)=(t-s)\bar \nu_{s,t}(\R^d)\le C $. Introducing the normalized (probability) measure $\tilde \zeta_{s,t,(\tau,\xi),\bmu}:=\frac{\zeta_{s,t,(\tau,\xi),\bmu}}{\zeta_{s,t,(\tau,\xi),\bmu}(\R^d)}=\frac{\zeta_{s,t,(\tau,\xi),\bmu}}{(t-s)\bar \nu_{s,t}(\R^d)} $, one then rewrites:
\begin{eqnarray*}
\int_{\R^d} |y|^{ \beta} P_{N_{s,t,(\tau,\xi),\bmu}}(dy)\le \exp(-(t-s)\bar \nu_{s,t}(\R^d)) \sum_{n=0}^{\infty}\frac{\big((t-s)\bar \nu_{s,t}(\R^d)\big)^n}{n!} \int_{\R^d}|y|^{ \beta}(\tilde \zeta_{s,t,(\tau,\xi),\bmu})^{\star n}(dy).
\end{eqnarray*}
Introducing now on a some probability space $(\Omega,\F,\P) $ a sequence $(X_i) _{i\ge 0}$ of i.i.d. random variables with law $\tilde \zeta_{s,t,(\tau,\xi),\bmu}$, we can rewrite from the above control:
\begin{eqnarray}
\int_{\R^d} |y|^{ \beta} P_{N_{s,t,(\tau,\xi),\bmu}}(dy)\le e^{-C^{-1}} \sum_{n=0}^{\infty}\frac{C^n}{n!} \E[|\sum_{i=0}^n X_i|^\beta]\le e^{-C^{-1}} \sum_{n=0}^{\infty}\frac{C^n}{n!} (n+1)^{1\vee \beta}\E[| X_1|^\beta].\label{PREAL_BOUND_POISSON}
\end{eqnarray}
Now,
\begin{eqnarray}
\E[|X_1|^\beta]&=&\int_{\R^d} |y|^ \beta \tilde \zeta_{s,t,(\tau,\xi),\bmu}(dy)\le C\int_s^t dv \int_{\R^d} |y|^\beta \bar \nu_{s,t,(v,(\tau,\xi),\bmu)}(dy)\notag\\
&=&C\int_{s}^t dv \int_{|y|\ge (t-s)^{\frac 1\alpha}}|\sigma(v,\theta_{v,\tau}(\xi),\bmu(v))y|^\beta \nu(dy)\notag\\
&\le &C\int_{s}^t dv \int_{r\ge (t-s)^{\frac 1\alpha}}\frac{dr}{r^{1+\alpha}} \int_{{\mathbb S}^{d-1}}\omega(d\lambda) |\sigma(v,\theta_{v,\tau}(\xi),\bmu(v))\lambda r|^\beta \le C\int_{s}^t dv \int_{r\ge (t-s)^{\frac 1\alpha}}\frac{dr}{r^{1+\alpha-\beta}}\notag\\
&\le& C_{\alpha,\beta}(t-s)^{\frac \beta \alpha},\label{CTR_MOMENT_X1}
\end{eqnarray}
recalling that $\alpha>\beta $ for the last inequality. Observe as well that the last constant $C_{\alpha,\beta} $ depends on $\alpha $ and $\beta $ and explodes when $\beta \uparrow \alpha $.  
Plugging the above bound into \eqref{PREAL_BOUND_POISSON} we derive the first statement \eqref{GLOB_INT_POISSON} without any specific assumption on the jump measure.

Assume now that $\nu $ is  absolutely continuous. Then, still with the previous notations,  we can bound the measure $\int_{s}^t dv  {\bar \nu}_{s,t,(v,(\tau,\xi),\bmu)} $ uniformly in  $(v,(\tau,\xi),\bmu)$. Namely, for all $A\in {\mathcal B}(\R^d) $ and with the notations of assumption \A{AC}:
\begin{eqnarray*}
\Big(\int_s^t dv  {\bar \nu}_{s,t,(v,(\tau,\xi),\bmu)}\Big)(A)&=&\int_{s}^t dv \big(\bar \nu_{s,t,(v,(\tau,\xi),\bmu)}(A)\big)=\int_{s}^t dv \big( \bar \nu_{s,t}(\{y\in \R^d: \sigma(v,\theta_{v,\tau}(\xi),\bmu(v))y\in  A\})\big)\\
&=&\int_{s}^t dv \int_{|y|\ge (t-s)^{\frac 1\alpha}}\I_{ \sigma(v,\theta_{v,\tau}(\xi),\bmu(v))y\in A} \nu(dy)\\
&=&\int_{s}^t dv \int_{|\sigma(v,\theta_{v,\tau}(\xi),\bmu(v))^{-1}\tilde y|\ge (t-s)^{\frac 1\alpha}}\!\!\!\I_{\tilde y \in A} \frac{f(\sigma(v,\theta_{v,\tau}(\xi),\bmu(v))^{-1}\tilde y)}{|\sigma(v,\theta_{v,\tau}(\xi),\bmu(v))^{-1}\tilde y|^{d+\alpha}}\frac{d\tilde y}{{\rm det}(\sigma(v,\theta_{v,\tau}(\xi),\bmu(v))) } \\
&\le & C \int_{s}^t dv \int_{|\tilde y|\ge c(t-s)^{\frac 1\alpha}} \I_{\tilde y\in A}  g\Big(\frac{\sigma(v,\theta_{v,\tau}(\xi),\bmu(v))^{-1}\tilde y}{|\sigma(v,\theta_{v,\tau}(\xi),\bmu(v))^{-1}\tilde y|}\Big) \frac{d\tilde y}{|\tilde y|^{d+\alpha}}.
\end{eqnarray*}
Since $g$ is bounded on ${\mathbb S}^{d-1} $ (recall $g$ is Lipschitz continuous on ${\mathbb S}^{d-1} $), writing $\tilde y=r\theta,\ (r,\theta)\in \R_*^{+}\times {\mathbb S}^{d-1}$, we derive that there exists a constant $C$ and a probability measure $ \rho_{s,t}(dy)=\kappa (t-s) \frac{dr }{r^{1+\alpha}}\I_{|r|\ge c(t-s)^{\frac 1\alpha}}\Lambda_{{\mathbb S}^{d-1}}(d\theta) $, s.t. for all $(\tau,\xi,\bmu)\in [0,T]\times \R^d\times {\mathcal C}(\R^+,{\mathcal P}(\R^d))$, $A\in {\mathcal B}(\R^d) $:
\begin{equation*}
\Big(\int_s^t dv  {\bar \nu}_{s,t,(v,(\tau,\xi),\bmu)}\Big)(A)\le C (t-s)\int_{|r|\ge c(t-s)^{\frac 1\alpha}}\frac{dr}{r^{1+\alpha}}\int_{{\mathbb S}^{d-1}}\Lambda_{{\mathbb S}^{d-1}}(d\theta) \I_{r\theta\in A}=C\rho_{s,t}(A),
\end{equation*}
up to a modification of $C$. The above equation and \eqref{EXPR_LAW_POISSON} eventually give \eqref{DEF_POISSON_QUI_DOMINE}.

\end{proof}

\begin{remark}[Integrability of the stochastic integral]
As a direct corollary of Lemma \ref{EST_DENS_MART} and \ref{EST_POISSON} we derive that for any non-degenerate jump measure $\nu $ satisfying \eqref{NON_DEG_STABLE} and  all $\beta \in [0,\alpha) $ there exists $C_\beta \ge 1$ (increasing with $ \beta$) s.t. for all $0\le s<t, z\in \R^d,\bmu \in {\mathcal C}(\R^+,{\mathcal P}(\R^d)) $:
\begin{equation}
\label{INT_INT_STO}
\int_{\R^d}|y|^\beta p_{\Theta_{s,t,(\tau,\xi),\bmu}}(y)dy\le C_\beta (t-s)^{\frac \beta\alpha}.
\end{equation}
\end{remark}

More generally, under \A{AC}, Lemma \ref{DER_DENS_PROXY} readily follows from Lemmas \ref{EST_DENS_MART} and \ref{EST_POISSON} setting for all $z\in \R^d $, $\bar q(t-s,z):=\int_{\R^d}p_{\bar M_{s,t},m}(z-\bar \zeta)P_{\bar N_{\rho_{s,t}}}(d{\tilde \zeta}) $, for any arbitrary integer $m\ge d+1 $.\\

\subsection{Proof of Lemma \ref{CHARACTERIZATION_OF_GRADIENT_EXPLOSION_AND_HOLDER_MODULUS}. Quantitative bounds for the solution $u_1^\delta $ of the PDE \eqref{EDP_U_I_DELTA}}
To derive the announced estimates, we rewrite the solution $u_i^\delta \in {\mathcal C}^{\alpha+\eta}(\R^d,\R)$ of \eqref{EDP_U_I_DELTA} as follows:
\begin{equation}
\label{FROZEN_Asso_PDE_U_I}
\left\{
\begin{array}{l}
\Big(\partial_v   +  \tilde L_v^{\bP_i,\alpha,(\tau,\xi)} \Big) u_i^\delta(v,x) + b(v,\theta_{v,\tau}(\xi),\bP_i(v)) \cdot D_x u_{i}^\delta(v,x) \\
\hspace*{.5cm}= -R_{\bP_i}^{\alpha,(\tau,\xi)}(v,x),\ (v,x)\in [0,t)\times \R^d, \\
u_i^\delta(t,x) =h^\delta(x),\quad \text{on }\R^d,
\end{array} \right. 
\end{equation}
with the notations of \eqref{DEF_OPERATEUR_GELE} and \eqref{THE_TERM_PERTURB_FIXED_MEASURE} (replacing $w $ by $u_i^\delta$ in $R_{\bP_i}^{\alpha,(\tau,\xi)}$), i.e.

\begin{equation*}
\tilde L_v^{\bP_i,\alpha,(\tau,\xi)} \varphi(x):={\rm {p.v.}}\int_{\R^d} \big(\varphi(x+\sigma(v,\theta_{v,\tau}(\xi),\bP_i(v)) \zeta)-\varphi(x) \big) \nu(d\zeta)
\end{equation*}

\noindent and
\begin{equation*}
R_{\bP_i}^{\alpha,(\tau,\xi)}(v,x):=\big(b(v,x,\bP_i(v))-b(v,\theta_{v,\tau}(\xi),\bP_i(v)) \big) \cdot Du_i^\delta(v, x)+\Big(L_s^{\bP_i,\alpha}-\tilde L_s^{\bP_i,\alpha,(\tau,\xi)}\Big) u_i^\delta(v,x).
\end{equation*}

\noindent The PDE \eqref{FROZEN_Asso_PDE_U_I} suggests to rewrite $u_i^\delta$ through a Duhamel type expansion (which has been fully justified in Proposition 8 of \cite{chau:meno:prio:19}). This yields for all $(s,x)\in [0,t]\times \R^d $,
\begin{eqnarray}
u_i^\delta(s,x)&=&\tilde  P_{s,t}^{\bP_i,\alpha,(\tau,\xi)}h^\delta(x)+\int_s^t dv \big(\tilde  P_{s,v}^{\bP_i,\alpha,(\tau,\xi)}R_{\bP_i}^{\alpha,(\tau,\xi)}(v,\cdot)\big)(x)\notag\\
&=&\int_{\R^d }\tilde  p^{\bP_i,\alpha,(\tau,\xi)}(s, t,x,y) h^\delta(y)dy +\int_s^t dv \int_{\R^d }\tilde  p^{\bP_i,\alpha,(\tau,\xi)}(s,v,x,y) R_{\bP_i}^{\alpha,(\tau,\xi)}(v,y)dy.\label{PREAL_CIRC_U_I_DELTA}
\end{eqnarray}
Now, from \eqref{THE_TERM_PERTURB_FIXED_MEASURE} and \eqref{THE_CORRESP_DER_INT_STO}, similarly to \eqref{THE_CTR_PERTURB_1}-\eqref{PREAL_CIRCULAR_2},  choosing $(\tau,\xi)=(s,x) $ and exploiting \eqref{CORRES_LINEARIZED_FLOW_AND_FLOW} and Lemma \ref{DER_DENS_PROXY}, we derive:
\begin{eqnarray}
\Big|\int_s^t dv \int_{\R^d }\tilde  p^{\bP_i,\alpha,(\tau,\xi)}(s, v, x, y) R_{\bP_i}^{\alpha,(\tau,\xi)}(v, y)dy\Big| \Bigg|_{(\tau,\xi)=(s,x)}\notag\\
\le C\int_s^t dv (v-s)^{\frac{2\eta}{\alpha}}\Big( \|u_i^\delta\|_\infty\I_{\left\{ \alpha\le 1\right\}}+\|Du_i^\delta(v,\cdot)\|_\infty+\|Du_i^\delta(v,\cdot)\|_{\dot {\mathcal C}^{\vartheta}}\I_{\left\{\alpha\ge 1\right\}}\Big).\label{PREAL_CIRCULAR_2_U_1_DELTA}
\end{eqnarray}

Thus, recalling from the definition of $h^\delta=h\star \rho_\delta $ (for a smooth mollifying kernel $\rho_\delta $) that $\|h^\delta\|_\infty\le \|h\|_\infty $, we get: 
\begin{equation}
|u_i^\delta(s, x)|\le C\bigg(\|h\|_\infty+(t-s)^{}\|u_i^\delta\|_\infty\I_{\left\{\alpha\le1\right\}}+\int_s^t dv (v-s)^{\frac{2\eta}{\alpha}}\Big( \|Du_i^\delta(v,\cdot)\|_\infty+\|Du_i^\delta(v,\cdot)\|_{\dot {\mathcal C}^{\vartheta}}\I_{\left\{\alpha\ge 1\right\}}\Big)\bigg).
\label{ALL_READY_FOR_CIRCULAR_U_1_DELTA}
\end{equation}

From \eqref{PREAL_CIRC_U_I_DELTA}, we can differentiate w.r.t $x$ and also use a cancellation technique for the final condition. Namely,
\begin{equation}
D_xu_i^\delta(s,x)
=\int_{\R^d } D_x \tilde p^{\bP_i,\alpha,(\tau,\xi)}(s, t, x, y) [h^\delta(y)-h^\delta(\theta_{t,\tau}(\xi))]dy+\int_s^t dv \int_{\R^d } D_x \tilde p^{\bP_i,\alpha,(\tau,\xi)}(s, v, x, y) R_{\bP_i}^{\alpha,(\tau,\xi)}(v, y)dy.\label{DIFF_WITH_CANCEL_U_1_DELTA}
\end{equation}
Hence, from \eqref{DIFF_WITH_CANCEL_U_1_DELTA}, recalling that $h^\delta$ is $\gamma $-H\"older continuous in space uniformly in the regularization parameter $\delta $, with in particular $\|h^\delta\|_{\dot {\mathcal C}^\gamma}\le \|h\|_{\dot {\mathcal C}^\gamma}  $, similarly to \eqref{ALL_READY_FOR_CIRCULAR_U_1_DELTA}, we get:
\begin{eqnarray}
|D_xu_i^\delta(s,x)|\label{THE_CTR_PERTURB_1_GRAD_U_1_DELTA}\\
\le C\Big(\frac{\|h\|_{\dot {\mathcal C}^\gamma}}{(t-s)^{\frac 1\alpha}}\int_{\R^d } dy |y-\theta_{v,\tau}(\xi)|^{\gamma} \bar q\big(v-s, y-m_{v,s}^{(\tau,\xi)}(x)\big) \notag\\
+\int_s^t \frac{dv}{(v-s)^{\frac 1\alpha}} \int_{\R^d} dy\Big(\|b(v,\cdot,\bP_i(v))\|_{{\mathcal C}^{2\eta}}|D_yu_i^\delta(v, y)|+ \|\sigma(v,\cdot,\bP_i(v))\|_{{\mathcal C}^{2\eta}}\big[ (\|u_i^\delta\|_\infty+\|D_xu_i^\delta(v,\cdot)\|_\infty)\I_{\left\{\alpha<1\right\}}\notag\\
+\|D_xu_i^\delta(v,\cdot)\|_{{\mathcal C}^{\vartheta}}\I_{\left\{\alpha>1\right\}}+\|u_i^\delta(v,\cdot)\|_{{\mathcal C}^{1+\vartheta}}\I_{\left\{\alpha=1\right\}}\big]\Big)|y-\theta_{v,\tau}(\xi)|^{2\eta} \bar q\big(v-s, y-m_{v, s}^{(\tau,\xi)}(x)\big)\Big).\notag
\end{eqnarray}

Taking $(\tau,\xi)=(s,x) $, it follows again from \eqref{CORRES_LINEARIZED_FLOW_AND_FLOW} and Lemma \ref{DER_DENS_PROXY} that:
\begin{eqnarray}
&&|D_x u_i^\delta(s,x)|
\\
&\le &C\bigg(\|h\|_{\dot {\mathcal C}^\gamma} (t-s)^{-\frac 1\alpha+\frac \gamma \alpha}
+\int_s^t dv (v-s)^{-\frac 1\alpha+\frac{2\eta}{\alpha}}\Big(\|u_i^\delta(v,\cdot)\|_\infty\I_{\left\{\alpha\le 1\right\}}+\|D_x u_i^\delta(v,\cdot)\|_\infty+\|D_x u_i^\delta(v,\cdot)\|_{\dot {\mathcal C}^{\vartheta}}\I_{\left\{\alpha \ge 1\right\}}\Big)\bigg).\notag\label{PREAL_CTR_ALPHA_GE_1_GRAD_U_1_DELTA}
\end{eqnarray}
We now use the notations of \eqref{DEF_PSI} and \eqref{DEF_PHI} with the exponents modified, namely we consider here 
\begin{equation}\label{DEF_PHI_U_1_DELTA}
\Phi(v):=(t-v)^{\Xi}\sup_{x\in \R^d}|D_xu_i^\delta(v, x)|, \ \Xi:=\frac 1\alpha-\frac \gamma\alpha,\ v\in [s,t],
\end{equation}
 and
 \begin{equation}\label{DEF_PSI_U_1_DELTA}
 \Psi(v):=(t-v)^{\Xi+\frac\vartheta\alpha}\|D_x u_i^\delta(v,\cdot)\|_{{\mathcal C}^\vartheta}, \ v\in [s,t].
 \end{equation}
 \begin{equation}\label{EQ_XI_PLUS_THETA_U_1_DELTA}
 \Xi+\frac \vartheta\alpha=\frac 1\alpha-\frac \gamma\alpha+1-\varepsilon-(\frac 1\alpha-\frac \gamma \alpha)=1- \varepsilon<1.
 \end{equation}
In particular, with these notations at hand, since $t-s$ is \textit{small}, it follows from \eqref{ALL_READY_FOR_CIRCULAR_U_1_DELTA} that
\begin{equation}
\label{CTR_PART_NORME_SUP_U_1_DELTA}
\|u_i\|_\infty\le C\bigg(\|h\|_\infty+\int_s^t dv (v-s)^{\frac{2\eta}{\alpha}}\Big( (t-v)^{-\Xi}\Phi(v)+(t-v)^{-(\Xi+\frac\vartheta\alpha)}\Psi(v)
\Big)\bigg).
\end{equation}

As above,  we  distinguish the \textit{diagonal} and \textit{off-diagonal} regimes w.r.t. the current considered times. In the \textit{off-diagonal} regime holds, we readily get from \eqref{PREAL_CTR_ALPHA_GE_1_GRAD_U_1_DELTA}:
\begin{eqnarray}
(t-s)^{\frac \vartheta\alpha}\frac{|D_x u_i^\delta(s, x)-D_x u_i^\delta(s,x')|}{|x-x'|^\vartheta}&\le& |D_x u_i^\delta(s, x)-D_x u_i^\delta(s,x')|\le |D_x u_i^\delta(s,x)|+|D_x u_i^\delta(s,x')|\notag\\ 
&\le &C\bigg(\|h\|_{\dot {\mathcal C}^\gamma} (t-s)^{-\frac 1\alpha+\frac \gamma \alpha}\notag\\
&&+\int_s^t dv (v-s)^{-\frac 1\alpha+\frac{2\eta}{\alpha}}\Big(\|u_i^\delta\|_\infty\I_{\left\{\alpha \le 1\right\}}+(t-v)^{- \Xi}\Phi(v)+(t-v)^{-(\Xi+\frac \vartheta\alpha)}\Psi(v)\Big)\bigg),\notag\\
(t-s)^{\Xi+\frac \vartheta\alpha}\frac{|D_x u_i^\delta(s,x)- D_x u_i^\delta(s, x')|}{|x-x'|^\vartheta}&\le &C\bigg( \|h\|_{\dot {\mathcal C}^\gamma}+(t-s)^{1-\frac 1\alpha+\frac{2\eta}\alpha}\sup_{v\in [s,t]}\Phi(v)\notag\\
&&+(t-s)^{1-(\frac 1\alpha+\frac \vartheta\alpha)+\frac{2\eta}\alpha}\sup_{v\in [s, t]}\Psi(v)\bigg),\label{PREAL_CTR_ALPHA_GE_1_HOLD_HD_U_1_DELTA}
\end{eqnarray}

\noindent using as well \eqref{CTR_PART_NORME_SUP_U_1_DELTA} for the last inequality.

Let us now turn to the global \textit{diagonal regime}: $|x-x'| \le (t-s)^{\frac 1\alpha}$. We again need to split as above the analysis in function of the considered running time introducing a change of freezing point. Namely, from Proposition 12 in \cite{chau:meno:prio:19}, we derive:
\begin{eqnarray}
&&D_x u_i^\delta(s, x)-D_x u_i^\delta(s,x')\notag\\
&=&\Big(\int_s^t dv \I_{v\le \tau_0}\int_{\R^d }D_x\tilde  p^{\bP_i,\alpha,(\tau, \xi)}(s, v, x, y)R_{\bP_i}^{\alpha,(\tau,\xi)}(v,y)dy\notag\\
&&-\int_s^t dv \I_{v\le \tau_0}\int_{\R^d }D_x\tilde  p^{\bP_i,\alpha,(\tau, \xi')}(s,v,x',y)R_{\bP_i}^{\alpha,(\tau,\xi')}(v,y)dy\Big)\Big|_{(\tau_0,\tau,\xi,\xi')=(t_0,s,x,x')}\notag\\
&&-\Big(\big(D_x\tilde P_{s,\tau_0}^{\bP_i\alpha,(\tau,\xi')} u_i^\delta(\tau_0,x')\big)_{(\tau_0,\tau,\xi')=(t_0, s, x')}-\big(D_x\tilde P_{s,\tau_0}^{\bP_i,\alpha,(\tau,\tilde \xi')} u_i^\delta(\tau_0, x')\big)_{(\tau_0,\tau,\tilde \xi')=(t_0,s,x)} \Big)\notag\\
&&+\Big(\int_{\R^d}dy h^{\delta}(y)\big(D_x\tilde  p^{\bP_i,\alpha,(\tau, \xi)}(s, t,x, y)-D_x\tilde  p^{\bP_i,\alpha,(\tau,\xi)}(s,t,x',y) \big)\notag\\
&&+\int_s^t dv \I_{v>\tau_0}\int_{\R^d }\big(D_x\tilde  p^{\bP_i,\alpha,(\tau, \xi)}(s,v,x,y)-D_x\tilde  p^{\bP_i,\alpha,(\tau,\xi)}(s,v,x',y) \big) R_{\bP_i}^{\alpha,(\tau,\xi)}(v,y)dy\Big)\Big|_{(\tau_0,\tau,\xi)=(t_0,s,x)}\notag\\
&=:&\Delta u_{i 1}^\delta(s,x,x')+\Delta u_{i 2}^\delta(s,x,x')+\Delta u_{i 3}^\delta(s,x,x').\notag\\
\label{DECOUP_HOLDER_MOD_DIAG_U_1_DELTA}
 \end{eqnarray}

 The term $\Delta u_{11}^\delta(s,x,x')$, corresponding to the local off-diagonal regime within the global diagonal one, can be analyzed as above (see the analysis of the contribution $\Delta w_{1}(s,x,x')$ introduced in  \eqref{DECOUP_HOLDER_MOD_DIAG}). This yields:
 \begin{eqnarray*}
&& |\Delta u_{i 1}^\delta(s,x,x')|\le \Big|\int_s^{\tau_0} dv \int_{\R^d }D_x\tilde  p^{\bP_i,\alpha,(\tau, \xi)}(s, v, x, y)R_{\bP_i}^{\alpha,(\tau,\xi)}(v, y)dy\Big| \bigg|_{(\tau_0,\tau,\xi)=(t_0,s,x)}\notag\\
&&
+\Big|\int_s^{\tau_0} dv \int_{\R^d }D_x\tilde  p^{\bP_i,\alpha,(\tau, \xi')}(s,v,x',y)R_{\bP_i}^{\alpha,(\tau,\xi')}(v,y)dy\Big|\bigg|_{(\tau_0,\tau,\xi')=(t_0,s,x')}\\
&\le &C\bigg(\int_s^{t_0} dv (v-s)^{-\frac 1\alpha+\frac{2\eta}{\alpha}}\big[(t-v)^{-\Xi}\Phi(v)+(t-v)^{-(\Xi+\frac \vartheta\alpha)}\Psi(v)\big]\frac{|x-x'|^\vartheta}{(v-s)^{\frac \vartheta \alpha}}\bigg).
 \end{eqnarray*}
 For $v\in [s,t_0]$, $(t-v)\ge (1-c_0)(t-s) $ and therefore:
\begin{eqnarray*}
 \frac{|\Delta u_{i1}^\delta(s,x,x')|}{|x-x'|^\vartheta}
&\le &C\bigg((t-s)^{-\Xi+1-\frac 1\alpha+\frac{2\eta}{\alpha}-\frac \vartheta \alpha}\sup_{v\in [s,t]}\Phi(v)
+(t-s)^{-(\Xi+\frac \vartheta\alpha)+1-\frac 1\alpha+\frac{2\eta}{\alpha}-\frac \vartheta \alpha}\sup_{v\in [s,t]}\Psi(v)\bigg),
 \end{eqnarray*}
 recalling for the last inequality that $-\frac 1\alpha+\frac{2\eta}{\alpha} -\frac\vartheta\alpha= 2\frac \eta\alpha -(\frac 1\alpha+\frac \vartheta\alpha)= \frac{2\eta}{\alpha} -\big(\frac 1\alpha+1-\varepsilon-(\frac 1\alpha-\frac \gamma\alpha)\big)=-1+\frac{2 \eta}{\alpha} -\frac{\gamma}{\alpha}+\varepsilon>-1 $.
  This eventually gives:
 \begin{eqnarray}
(t-s)^{\Xi+\frac \vartheta\alpha} \frac{|\Delta u_{i1}^\delta(s,x,x')|}{|x-x'|^\vartheta}&\le &C\Big((t-s)^{1-\frac 1\alpha+\frac{2\eta}{\alpha}}\sup_{v\in [s,t]}\Phi(v)+(t-s)^{1-(\frac 1\alpha+\frac \vartheta \alpha)+\frac{2\eta}{\alpha}}\sup_{v\in [s,t]}\Psi(v)\Big). 
\label{CTR_HOLDER_DIAG_HORS_DIAG_1_U_1_DELTA}
\end{eqnarray}

Turning now to $\Delta u_{13}^\delta $ in \eqref{DECOUP_HOLDER_MOD_DIAG_U_1_DELTA}, expanding the frozen densities, exploiting as well \eqref{THE_CTR_DER_DENS_PROXY_DIAG_PERTURB} which gives that a diagonal perturbation of the density does not affect the related estimates, we write:
\begin{eqnarray*}
 &&|\Delta u_{i3}^\delta(s,x,x')|\\
 &\le& \Big|\int_{0}^1 d\lambda  \int_{\R^d }D^2_x \tilde  p^{\bP_i,\alpha,(\tau, \xi)}(s, t, x+\lambda (x'-x),y)(x'-x)[h^{\delta}(y)-h^{\delta}(m_{s, t}^{\tau,\xi}(x+\lambda(x'-x))]dy\Big| \bigg|_{(\tau_0,\tau,\xi)=(t_0,s,x)}\notag\\
&&+ \Big|\int_{\tau_0}^t dv\int_{0}^1 d\lambda  \int_{\R^d }D^2_x \tilde  p^{\bP_i,\alpha,(\tau, \xi)}(s, v, x+\lambda (x'-x),y)(x'-x)\times R_{\bP_i}^{\alpha,(\tau,\xi)}(v, y)dy\Big| \bigg|_{(\tau_0,\tau,\xi)=(t_0,s,x)}\notag\\
&\le &C\bigg(\|h\|_{\dot {\mathcal C}^\gamma}|x-x'|{(t-s)^{-(\frac 2\alpha-\frac{\gamma}{\alpha})}} \\
&&+|x-x'|^{\vartheta}\int_{t_0}^t dv (v-s)^{-\frac 2\alpha+\frac{2\eta}{\alpha}}\Big((t-v)^{-\Xi}\Phi(v)+ (t-v)^{-(\Xi+\frac \vartheta\alpha)}\Psi(v)\Big)|x-x'|^{1-\vartheta}\bigg).
\end{eqnarray*}
Recalling that, for $v\in [t_0,t], |x-x'|\le \big((v-s)/c_0\big)^{\frac 1\alpha} $, we derive:
\begin{eqnarray*}
 |\Delta u_{i3}^\delta(s,x,x')|&\le& 
C\bigg(\|h\|_{\dot {\mathcal C}^\gamma}|x-x'|^\vartheta(t-s)^{\frac{\gamma}{\alpha}-(\frac 1\alpha+\frac \vartheta\alpha)} \\
&&+|x-x'|^\vartheta\int_{t_0}^t dv (v-s)^{-\frac 1\alpha-\frac{\vartheta}{\alpha}+\frac{2\eta}{\alpha}}\big[(t-v)^{-\Xi}\Phi(v)+(t-v)^{-(\Xi+\frac \vartheta\alpha)}\Psi(v)\big]\bigg). 
 \end{eqnarray*}
Therefore, recalling from \eqref{DEF_PHI_U_1_DELTA}-\eqref{DEF_PSI_U_1_DELTA} that $\Xi+\frac \vartheta \alpha=\frac{1-\gamma+\vartheta}{\alpha} $, we get: 
\begin{equation}
(t-s)^{\Xi+\frac \vartheta \alpha}\frac{ |\Delta u_{i3}^\delta(s,x,x')|}{|x-x'|^\vartheta}\le C\bigg(\|h\|_{\dot {\mathcal C}^\gamma}+(t-s)^{1-\frac 1\alpha+\frac {2\eta}\alpha}\sup_{v\in [s,t]} \Phi(v)+(t-s)^{1-(\frac1\alpha+\frac\vartheta\alpha)+\frac {2\eta}\alpha}\sup_{v\in [s,t]} \Psi(v)\bigg).\label{CTR_HOLDER_DIAG_HORS_DIAG_3_U_1_DELTA}
\end{equation}
The remaining term $\Delta u_{i2}^\delta$ in \eqref{DECOUP_HOLDER_MOD_DIAG_U_1_DELTA} can be handled exactly as the previous $\Delta w_2 $ introduced in \eqref{DECOUP_HOLDER_MOD_DIAG}, replacing $w$ by $u_{i}^\delta$. This eventually gives:
\begin{equation}
\label{CTR_HOLDER_DIAG_HORS_DIAG_2_U_1_DELTA}
(t-s)^{\Xi+\frac{\vartheta}{\alpha}}\frac{|\Delta u_{i 2}^\delta(s, x, x')|}{|x-x'|^\vartheta}\le Cc_0^{\frac\vartheta\alpha}(1-c_0)^{-(\Xi+\frac{\vartheta}{\alpha})}\Psi(t_0).
\end{equation}
Plugging \eqref{CTR_HOLDER_DIAG_HORS_DIAG_1_U_1_DELTA}, \eqref{CTR_HOLDER_DIAG_HORS_DIAG_3_U_1_DELTA} and \eqref{CTR_HOLDER_DIAG_HORS_DIAG_2_U_1_DELTA} into \eqref{DECOUP_HOLDER_MOD_DIAG_U_1_DELTA}, we get that in the diagonal case $|x-x'|\le (t-s)^{\frac 1\alpha} $:
\begin{eqnarray}
(t-s)^{\Xi+\frac{\vartheta}{\alpha}}\frac{|D_x u_i^\delta(s, x)-Du_i^\delta(s, x')|}{|x-x'|^\vartheta}&\le& C\Big(\|h\|_{\dot {\mathcal C}^\gamma}+(t-s)^{1-\frac 1\alpha+\frac {2\eta}\alpha}\sup_{v\in [s,t]} \Phi(v)\notag\\
&&+(t-s)^{1+\frac {2\eta}\alpha-(\frac1\alpha+\frac\vartheta\alpha)}\sup_{v\in [s,t]} \Psi(v)
+c_0^{\frac\vartheta\alpha}(1-c_0)^{- (\Xi+\frac{\vartheta}{\alpha})}\Psi(t_0)\Big)\label{BD_HOLDER_DIAG_FINAL_ALPHA_GE_1_U_1_DELTA}.
\end{eqnarray}
Putting together \eqref{BD_HOLDER_DIAG_FINAL_ALPHA_GE_1_U_1_DELTA} and \eqref{PREAL_CTR_ALPHA_GE_1_HOLD_HD_U_1_DELTA}, we can conclude the proof of Lemma \ref{CHARACTERIZATION_OF_GRADIENT_EXPLOSION_AND_HOLDER_MODULUS} following exactly the procedure for $w$ described in Section \ref{THE_SEC_FOR_W} (using as well \eqref{ALL_READY_FOR_CIRCULAR_U_1_DELTA} and \eqref{PREAL_CTR_ALPHA_GE_1_GRAD_U_1_DELTA}). This yields the controls in equations \eqref{BD_CTR_U_I_DELTA_GRADIENT} and \eqref{BD_CTR_U_I_DELTA_MOD_HOLDER_GRADIENT}. The proof is now complete.\hfill $\square $

\subsection{Proof of Lemma \ref{LEM_DIFF_H}}


We first analyze the sensitivities of the generators w.r.t. the measure arguments  or w.r.t to both the spatial points and the measure arguments. The  results of Lemma \ref{LEM_DIFF_H}, i.e. equations  \eqref{CTR_DIST_OP} and  \eqref{CTR_DIST_OP_HOLDER} then readily follow from Lemma \ref{LEMME_SENSI_MES_ET_SPACE} and the  a priori bounds of Lemma \ref{CHARACTERIZATION_OF_GRADIENT_EXPLOSION_AND_HOLDER_MODULUS}. 

\begin{lem}[Sensitivity analysis of the generators w.r.t. the measure and the spatial parameters]\label{LEMME_SENSI_MES_ET_SPACE}
Let $\varphi $ be a function in ${\mathcal C}^{1+\vartheta}(\R^d,\R) $ with $\vartheta>\alpha-1 $ if $\alpha\ge 1 $.
 Under \A{A${}_S $} it then holds that, there exists a positive constant $C:=C($\A{A${}_S$}$)$ s.t. for all $v\in [s,t] $:
 \begin{eqnarray}
&&|(b(v,x,\bP_1(v))-b(v,x,\bP_2(v)))\cdot D\varphi(x)|\le C \bd_{2 \eta,s,t}(\bP_1,\bP_2)\|D\varphi\|_\infty,\label{DIFF_DRIFT_SIMPLE}\\
&& \Big| (L_v^{\bP_1,\alpha}-L_v^{\bP_2,\alpha}) \varphi(x)\Big|\le C \bd_{2 \eta,s,t}(\bP_1,\bP_2)\big((\|D\varphi\|_\infty+\|\varphi\|_\infty)\I_{\left\{\alpha <1\right\}}+\|D\varphi\|_{\dot {\mathcal C}^\vartheta}\I_{\left\{\alpha>1\right\}}+\|\varphi\|_{{\mathcal C}^{1+\vartheta}}\I_{\left\{ \alpha=1\right\}}\big).\label{DIFF_MEAS_SIMPLE}
\end{eqnarray}
 
 Furthermore, the following estimates also hold for the differences: there exists a positive constant $C:=C($\A{A${}_S$}$)$ such that for any $\lambda \in [0,1]$
 \begin{eqnarray}
&&\Big|(b(v,x,\bP_1(v))-b(v,x,\bP_2(v)))\cdot D\varphi(x)-(b(v,x',\bP_1(v))-b(s,x',\bP_2(v)))\cdot D\varphi(x')\Big|\notag\\
&\le& C \big( \bd_{ (1-\lambda)2 \eta,s,t}(\bP_1,\bP_2)|x-x'|^{\lambda 2\eta}\|D\varphi\|_{\infty}+\bd_{2  \eta,s,t}(\bP_1,\bP_2) |x-x'|^\vartheta\|D\varphi\|_{\dot {\mathcal C}^\vartheta}\big),\label{DIFF_B_MES}\\
&& \Big| (L_v^{\bP_1,\alpha}-L_v^{\bP_2,\alpha}) \varphi(x)-(L_v^{\bP_1,\alpha}-L_v^{\bP_2,\alpha}) \varphi(x'))\Big|\notag\\
&\le&C  \Big(\bd_{(1-\lambda) 2\eta,s,t}(\bP_1,\bP_2) |x-x'|^{\lambda 2\eta}\big((\|D\varphi\|_\infty+\|\varphi\|_\infty)\I_{\left\{\alpha <1\right\}}+\|D\varphi\|_{\dot {\mathcal C}^\vartheta}\I_{\left\{\alpha>1\right\}}+\|\varphi\|_{{\mathcal C}^{1+\vartheta}}\I_{\left\{\alpha=1\right\}}\big)\notag\\
&&+\bd_{2 \eta,s,t}(\bP_1,\bP_2) |x-x'|^{1+\vartheta-\alpha}\|D\varphi\|_{\dot {\mathcal C}^\vartheta} \Big).\label{DIFF_L_MES} 
 \end{eqnarray}
 \end{lem}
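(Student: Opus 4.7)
All four estimates of Lemma \ref{LEMME_SENSI_MES_ET_SPACE} will ultimately rest on the flat-derivative bounds \eqref{uniform:bound:diff:mes} and \eqref{holder:reg:diff:mes:b:or:sigma_stable} applied to $b$ and $\sigma$, which control the differences $\delta U(v,x):=U(v,x,\bP_1(v))-U(v,x,\bP_2(v))$ for $U\in\{b,\sigma\}$ in a one-parameter family of H\"older-in-space versus measure-regularity trade-offs. The drift estimates will be essentially direct; the non-local estimates require a standard small-jump/large-jump splitting, with case analysis on $\alpha$, and exploitation of the symmetry of $\nu$ whenever gradient cancellations are needed.

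For \eqref{DIFF_DRIFT_SIMPLE} I simply multiply the pointwise bound $|\delta b(v,x)|\le C\bd_{2\eta}(\bP_1(v),\bP_2(v))$ (which is \eqref{uniform:bound:diff:mes} at $\beta=0$) by $\|D\varphi\|_\infty$. For \eqref{DIFF_B_MES} I decompose
\begin{equation*}
\delta b(v,x)\cdot D\varphi(x)-\delta b(v,x')\cdot D\varphi(x')=\bigl[\delta b(v,x)-\delta b(v,x')\bigr]\cdot D\varphi(x)+\delta b(v,x')\cdot\bigl[D\varphi(x)-D\varphi(x')\bigr],
\end{equation*}
bounding the first summand via \eqref{holder:reg:diff:mes:b:or:sigma_stable} at parameter $\beta=\lambda$, which yields the mixed factor $|x-x'|^{2\lambda\eta}\bd_{(1-\lambda)2\eta}\|D\varphi\|_\infty$, and the second via \eqref{uniform:bound:diff:mes} combined with $|D\varphi(x)-D\varphi(x')|\le |x-x'|^\vartheta\|D\varphi\|_{\dot{\mathcal C}^\vartheta}$.

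For \eqref{DIFF_MEAS_SIMPLE}, setting $\sigma_i:=\sigma(v,x,\bP_i(v))$, I rewrite
\begin{equation*}
(L_v^{\bP_1,\alpha}-L_v^{\bP_2,\alpha})\varphi(x)=\mathrm{p.v.}\int_{\R^d}\bigl[\varphi(x+\sigma_1\zeta)-\varphi(x+\sigma_2\zeta)\bigr]\nu(d\zeta)
\end{equation*}
and split into small jumps $\{|\zeta|\le 1\}$ and large jumps $\{|\zeta|>1\}$. In the subcritical case $\alpha>1$, symmetry of $\nu$ lets me add and subtract $\langle D\varphi(x),(\sigma_1-\sigma_2)\zeta\rangle$ on the small-jump annulus; the Taylor remainder is then bounded by $\|D\varphi\|_{\dot{\mathcal C}^\vartheta}|\sigma_1-\sigma_2||\zeta|^{1+\vartheta}$ (integrable since the assumption $\vartheta>\alpha-1$ means $1+\vartheta>\alpha$), while on large jumps a mean-value bound $\|D\varphi\|_\infty|\sigma_1-\sigma_2||\zeta|$ is integrable against $\int_{|\zeta|>1}|\zeta|\nu(d\zeta)<\infty$. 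The critical case $\alpha=1$ is handled analogously through the norm $\|\varphi\|_{\mathcal C^{1+\vartheta}}$. For the supercritical case $\alpha<1$, I plan to perform the change of variables $\tilde\zeta=\sigma_i\zeta$, which under \A{AC} converts $\nu$ into the push\-forward L\'evy measure with density $f_i(\tilde\zeta)=g(\sigma_i^{-1}\tilde\zeta/|\sigma_i^{-1}\tilde\zeta|)/(|\sigma_i^{-1}\tilde\zeta|^{d+\alpha}\det\sigma_i)$; since each $f_i$ is positively homogeneous of degree $-(d+\alpha)$ and Lipschitz in $\sigma_i$ on the sphere, one gets $|f_1(\tilde\zeta)-f_2(\tilde\zeta)|\le C|\sigma_1-\sigma_2||\tilde\zeta|^{-(d+\alpha)}$, and bounding $|\varphi(x+\tilde\zeta)-\varphi(x)|$ by $\|D\varphi\|_\infty|\tilde\zeta|$ on $\{|\tilde\zeta|\le 1\}$ (integrable since $\alpha<1$) and by $2\|\varphi\|_\infty$ on $\{|\tilde\zeta|>1\}$ delivers the linear-in-$|\sigma_1-\sigma_2|$ factor claimed. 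In all cases $|\sigma_1-\sigma_2|\le C\bd_{2\eta}(\bP_1(v),\bP_2(v))$ from \eqref{uniform:bound:diff:mes}.

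Finally, for \eqref{DIFF_L_MES} I plan to decompose $(L_1-L_2)\varphi(x)-(L_1-L_2)\varphi(x')$ into two families of contributions: those where only the outer translation argument passes from $x$ to $x'$ while $\sigma_i$ is frozen at one of the two points, handled as in the proof of \eqref{DIFF_MEAS_SIMPLE} but now with spatial H\"older increments of $D\varphi$ or $\varphi$ (the extra $|x-x'|^{1+\vartheta-\alpha}$ factor in the second line of \eqref{DIFF_L_MES} arises precisely from integrating $|\zeta|^{1+\vartheta}$ against $\nu$ after the symmetry cancellation, with an $|x-x'|^{-\alpha}$ rescaling of the small-jump cutoff); and those where the $\sigma$-argument itself shifts, producing a factor $|\sigma_i(x)-\sigma_i(x')|\le C|x-x'|^{2\eta}$ to be combined with the measure-H\"older interpolation \eqref{holder:reg:diff:mes:b:or:sigma_stable} at parameter $\beta=\lambda$ to yield the mixed term $|x-x'|^{2\lambda\eta}\bd_{(1-\lambda)2\eta}$. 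The main obstacle will be to organize these two sources of spatial roughness simultaneously while keeping every jump integral convergent: the interpolation parameter $\lambda$ must be chosen freely to maximize the gain against the measure distance, while the exponent $\vartheta$ is pinned by the constraint $\vartheta>\alpha-1$ which governs the integrability of the symmetrized small-jump remainder.
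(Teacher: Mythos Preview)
Your treatment of the drift estimates \eqref{DIFF_DRIFT_SIMPLE} and \eqref{DIFF_B_MES} is correct and coincides with the paper's. For the non-local part, however, the paper does \emph{not} split by the value of $\alpha$: it performs the change of variables $\tilde\zeta=\sigma(v,x,\bP_i(v))\zeta$ uniformly, obtaining
\[
(L_v^{\bP_1,\alpha}-L_v^{\bP_2,\alpha})\varphi(x)=\mathrm{p.v.}\int_0^\infty\frac{dr}{r^{1+\alpha}}\int_{\mathbb S^{d-1}}[\varphi(x+r\theta)-\varphi(x)]\,\delta D(v,x,\theta,\bP_1(v),\bP_2(v))\,\Lambda_{\mathbb S^{d-1}}(d\theta),
\]
with $D(v,x,\theta,m):=g(\sigma^{-1}\theta/|\sigma^{-1}\theta|)\,|\sigma^{-1}\theta|^{-(d+\alpha)}(\det\sigma)^{-1}$, $\sigma=\sigma(v,x,m)$. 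The point is that all the $\sigma$- (hence measure-) dependence now sits in the scalar factor $\delta D$, while the $\varphi$-integrand is $\sigma$-free. Estimate \eqref{DIFF_MEAS_SIMPLE} then follows from $|\delta D|\le C\bd_{2\eta}$ together with the standard Taylor/symmetry argument on $\varphi(x+r\theta)-\varphi(x)$.

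For \eqref{DIFF_L_MES} your direct-Taylor plan has a genuine gap. After freezing the translation center at $x'$ and moving the $\sigma$-argument, a residual cross term of the type
\[
\int_0^1\!\!\int\bigl[D\varphi(x'+\sigma_\mu(x)\zeta)-D\varphi(x'+\sigma_\mu(x')\zeta)\bigr]\cdot\delta\sigma(v,x',\bP_1(v),\bP_2(v))\,\zeta\,\nu(d\zeta)\,d\mu
\]
remains: the bracket is controlled only by $|\sigma_\mu(x)-\sigma_\mu(x')|^\vartheta|\zeta|^\vartheta\le C|x-x'|^{2\eta\vartheta}|\zeta|^\vartheta$ (a purely spatial factor), while $|\delta\sigma(x')|\le C\bd_{2\eta}$ carries only the measure factor; the product $\bd_{2\eta}|x-x'|^{2\eta\vartheta}$ does not fit the stated form for arbitrary $\lambda$, and for large $|\zeta|$ there is no way to retain both factors simultaneously. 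The paper's uniform change of variables avoids this by splitting cleanly into $\mathscr D_1$, which carries $\delta D(x,\theta)-\delta D(x',\theta)$ against the $\sigma$-free integrand $\varphi(x+r\theta)-\varphi(x)$, and $\mathscr D_2$, which carries $\delta D(x',\theta)$ against the second difference of $\varphi$. Your threshold argument producing $|x-x'|^{1+\vartheta-\alpha}$ is precisely the paper's treatment of $\mathscr D_2$. For $\mathscr D_1$ the step you omit is to check that the composite map $m\mapsto D(v,x,\theta,m)$ inherits from $\sigma$ a flat derivative which is jointly $2\eta$-H\"older in $(x,y)$ (chain rule plus the bounded Lipschitz first derivative of $g$ in \A{AC}); only then does the analogue of \eqref{holder:reg:diff:mes:b:or:sigma_stable} give $|\delta D(x,\theta)-\delta D(x',\theta)|\le C|x-x'|^{2\lambda\eta}\bd_{(1-\lambda)2\eta}$ in one stroke. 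The paper's closing remark after the proof makes the same point: without transferring the sensitivity to the density one recovers only powers $\bd_{2\eta}^\beta$ rather than the required linear factor.
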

  
 \begin{proof}
 Write first using \A{B${}_H $} and \eqref{uniform:bound:diff:mes}:
 \begin{eqnarray*}
 \Big|(b(v,x,\bP_1(v))-b(v,x,\bP_2(v)))\cdot D\varphi(x)\Big|
 \le  C\bd_{2\eta,s,t}(\bP_1,\bP_2)\|D\varphi\|_{\infty}.
 \end{eqnarray*}
 This gives \eqref{DIFF_DRIFT_SIMPLE}.\\

 Write now
\begin{eqnarray}
 &&(L_v^{\bP_1,\alpha}-L_v^{\bP_2,\alpha}) \varphi(x)\notag\\
 &=&{\rm p.v.} \int_{\R^d} \big[\varphi(x+\sigma(v,x,\bP_1(v))z)-\varphi(x)\big] \nu(dz)
 -{\rm p.v.} \int_{\R^d} \big[\varphi(x+\sigma(v,x,\bP_2(v))z)-\varphi(x)\big] \nu(dz)
 \notag \\
 &=&{\rm p.v.} \int_{0}^{+\infty}\frac{dr}{r^{1+\alpha}}\int_{{\mathbb S}^{d-1}}\Lambda_{{\mathbb S}^{d-1}}(d\theta)
\big[ \varphi(x+r\theta)
-\varphi(x)\big]
 \Big( \frac{g(\frac{(\sigma_x^{v,\bP_1})^{-1} \theta}{|(\sigma_x^{v,\bP_1})^{-1} \theta|})}{|(\sigma_x^{v,\bP_1})^{-1} \theta|^{d+\alpha} \d\big( \sigma_x^{v,\bP_1}\big)}
-\frac{g(\frac{(\sigma_x^{v,\bP_2})^{-1} \theta}{|(\sigma_x^{v,\bP_2})^{-1} \theta|})}{|(\sigma_x^{v,\bP_2})^{-1} \theta|^{d+\alpha} \d\big( \sigma_x^{v,\bP_2}\big)} \Big),\notag\\
\label{LA_DIFF_DES_OP_AVEC_TRANSFERT_DENSITE}
\end{eqnarray}
where we used the absolute continuity condition on the L\'evy measure, assumption \A{AC}, and the specific structure \eqref{EXPR_FOR_F} of the density w.r.t. the Lebesgue measure. For simplicity we have also denoted in the above equation for $i\in \{1,2\} $: 
\begin{equation} 
\label{DIFF_SHORT_TO_COMPARE}
\sigma_x^{v,\bP_{i}} := \sigma(v,x,\bP_i(v)).
\end{equation}
From now on and for notational convenience, we introduce the following map:
\begin{equation}\label{DEF_D}
\pp \ni m \mapsto D(v, x, \theta, m) :=  \frac{g(\frac{(\sigma(v,x, m))^{-1} \theta}{|(\sigma(v, x ,m))^{-1} \theta|})}{|(\sigma(v, x ,m))^{-1} \theta|^{d+\alpha} \d\big( \sigma(v, x ,m)\big)}
\end{equation}
\noindent so that, with our notations
\begin{align}
\delta D(v, x,\theta,\bP_1(v),\bP_2(v))& =D(v, x, \theta, \bP_1(v)) - D(v, x, \theta, \bP_2(v)) \notag \\
	& = \Big( \frac{g(\frac{(\sigma_x^{v,\bP_1})^{-1} \theta}{|(\sigma_x^{v,\bP_1})^{-1} \theta|})}{|(\sigma_x^{v,\bP_1})^{-1} \theta|^{d+\alpha} \d\big( \sigma_x^{v,\bP_1}\big)}
-\frac{g(\frac{(\sigma_x^{v,\bP_2})^{-1} \theta}{|(\sigma_x^{v,\bP_2})^{-1} \theta|})}{|(\sigma_x^{v,\bP_2})^{-1} \theta|^{d+\alpha} \d\big( \sigma_x^{v,\bP_2}\big)} \Big).\label{DEF_delta_D}
\end{align}

\noindent Under \A{D${}_H $} and \A{AC}, using the Lipschitz regularity of $g$ and \eqref{uniform:bound:diff:mes} with $\beta=0 $ applied to the diffusion coefficient $\sigma$, it is readily seen that:
\begin{equation}\label{CTR_D}
|\delta D(v,x,\theta,\bP_1(v),\bP_2(v))|\le C \bd_{2\eta,s,t}(\bP_1,\bP_2).
\end{equation}
Equation \eqref{DIFF_MEAS_SIMPLE} then readily follows from \eqref{LA_DIFF_DES_OP_AVEC_TRANSFERT_DENSITE}-\eqref{CTR_D} by usual Taylor expansions and the symmetry condition which allows to use cancellation techniques for the small jumps when $\alpha\ge 1 $.\\

Let us now turn to the differences. Write first,
 \begin{eqnarray*}
 &&\Big|(b(v,x,\bP_1(v))-b(v,x,\bP_2(v)))\cdot D\varphi(x)-(b(v,x',\bP_1(v))-b(s,x',\bP_2(v)))\cdot D\varphi(x')\Big|\\
 &\le& |b(v,x,\bP_1(v))-b(v,x,\bP_2(v))| |D\varphi(x)-D\varphi(x')|\\
 &&\quad + \big| (b(v,x,\bP_1(v))-b(v,x,\bP_2(v)))-(b(v,x',\bP_1(v))-b(v,x',\bP_2(v))) \big| |D\varphi (x')|\\
 &\le & C(\bd_{2\eta,s,t}(\bP_1,\bP_2) |x-x'|^\vartheta\|D\varphi\|_{\dot {\mathcal C}^\vartheta}+\bd_{(1-\lambda)2\eta,s,t}(\bP_1,\bP_2)|x-x'|^{\lambda 2\eta}\|D\varphi\|_{\infty}),
 \end{eqnarray*}
 
 \noindent using \A{B${}_H $}, \eqref{uniform:bound:diff:mes} and \eqref{holder:reg:diff:mes:b:or:sigma_stable} applied to the drift coefficient $b$, with $\beta=0 $ and $\beta=\lambda $ respectively, for the last inequality. This gives \eqref{DIFF_B_MES}.\\

Let us now turn to the difference of the non-local operators. From \eqref{LA_DIFF_DES_OP_AVEC_TRANSFERT_DENSITE} we get with the notation of \eqref{DEF_D}:
\begin{eqnarray}
&&\Big| (L_v^{\bP_1,\alpha}-L_v^{\bP_2,\alpha}) \varphi(x)-(L_v^{\bP_1,\alpha}-L_v^{\bP_2,\alpha}) \varphi(x'))\Big|\notag\\
&\le&\Big| {\rm p.v.} \int_{0}^{+\infty}\frac{dr}{r^{1+\alpha}}\int_{{\mathbb S}^{d-1}}\Lambda_{{\mathbb S}^{d-1}}(d\theta)
\big[ \varphi(x+r\theta)
-\varphi(x)\big] \delta D(v,x,\theta,\bP_1(v),\bP_2(v))\notag\\
&&-{\rm p.v.} \int_{0}^{+\infty}\frac{dr}{r^{1+\alpha}}\int_{{\mathbb S}^{d-1}}\Lambda_{{\mathbb S}^{d-1}}(d\theta)
\big[ \varphi(x'+r\theta)
-\varphi(x')\big] \delta D(v, x', \theta,\bP_1(v),\bP_2(v))\Big|\notag\\
&\le &\Big| {\rm p.v.} \int_{0}^{+\infty}\frac{dr}{r^{1+\alpha}}\int_{{\mathbb S}^{d-1}}\Lambda_{{\mathbb S}^{d-1}}(d\theta)
\big[ \varphi(x+r\theta)
-\varphi(x)\big] [\delta D(v,x,\theta,\bP_1(v),\bP_2(v))- \delta D(v,x',\theta,\bP_1(v),\bP_2(v))]\Big|\notag\\
&&+\Big|{\rm p.v.} \int_{0}^{+\infty}\frac{dr}{r^{1+\alpha}}\int_{{\mathbb S}^{d-1}}\Lambda_{{\mathbb S}^{d-1}}(d\theta)
\big[ \varphi(x'+r\theta)
-\varphi(x')-(\varphi(x+r\theta)
-\varphi(x))\big] \delta D(v,x',\theta,\bP_1{(v)},\bP_2{(v)})\Big|\notag\\
&=:&({\mathscr D}_1+{\mathscr D}_2)(v,x,x',\bP_1,\bP_2).\notag\\
\label{SPLIT_D1_D2}
\end{eqnarray}
  Under \A{AC} and \A{D${}_H $}, using the mean-value theorem and the fact that $g$ has a bounded a Lipschitz gradient, we deduce that the continuous map $D(v, x, \theta, .)$ defined by \eqref{DEF_D} admits a bounded and continuous functional derivative such that $(\rr^d)^2 \ni (x, y) \mapsto [\delta D(v, x, \theta, m) / \delta m](y)$ is $2\eta$-H\"older uniformly w.r.t. the variables $v$, $\theta$ and $m$, so that, similarly to \eqref{holder:reg:diff:mes:b:or:sigma_stable}, it is easily seen that there exists $C$ s.t. for all $v\in [s,t], \theta \in {\mathbb S}^{d-1} $ and for any $\lambda \in [0,1]$
  $$
  |\delta D(v, x,\theta,\bP_1(v),\bP_2(v))-\delta D(v, x',\theta,\bP_1(v),\bP_2(v))|\le C |x-x'|^{\lambda 2\eta}\bd_{(1-\lambda) 2\eta,s,t}(\bP_1,\bP_2),
  $$
  which readily gives:
  \begin{equation}\label{CTR_D1}
|  {\mathscr D}_1(v,x,x',\bP_1,\bP_2)|\le C|x-x'|^{\lambda 2\eta}\bd_{(1-\lambda) 2\eta,s,t}(\bP_1,\bP_2)\big((\|D\varphi\|_\infty+\|\varphi\|_\infty)\I_{\left\{\alpha <1\right\}}+\|D\varphi\|_{\dot {\mathcal C}^\vartheta}\I_{\left\{\alpha>1\right\}}+\|\varphi\|_{{\mathcal C}^{1+\vartheta}}\I_{\left\{\alpha=1\right\}}\big).
  \end{equation}
Let us turn now to ${\mathscr D}_2$. We will establish:
 \begin{eqnarray}\label{CTR_D2}
|  {\mathscr D}_2(v,x,x',\bP_1,\bP_2)|
&\le& C\bd_{2\eta,s,t}(\bP_1,\bP_2) \|D\varphi\|_{\dot{\mathcal C}^\vartheta}|x-x'|^{1+\vartheta-\alpha} .
  \end{eqnarray}
Write indeed assuming first that $\alpha<1 $:
\begin{eqnarray*}
&&|{\mathscr D}_2(v,x,x',\bP_1,\bP_2)|\\
&\le& \Big|\int_0^1d\lambda  \int_{r\le |x-x'|}\frac{dr}{r^{1+\alpha}}\int_{{\mathbb S}^{d-1}}\Lambda_{{\mathbb S}^{d-1}}(d\theta) \Big(D\varphi(x+\lambda r\theta) -D\varphi(x'+\lambda r\theta) \Big) \cdot r \theta \delta D(v,x',\theta,\bP_1{(v)},\bP_2{(v)}) \Big|\\
&&+\Big|\int_{0}^1 d\lambda \int_{r\ge |x-x'|}\frac{dr}{r^{1+\alpha}}\int_{{\mathbb S}^{d-1}}\Lambda_{{\mathbb S}^{d-1}}(d\theta)\\
&&\times \Big( D \varphi(x'+r\theta+\lambda(x-x'))-D\varphi(x'+\lambda (x-x'))\Big)\cdot (x-x') \delta D(v,x',\theta,\bP_1{(v)},\bP_2{(v)}) \Big|\\
&\le & C\bd_{2\eta,s,t}(\bP_1,\bP_2)\Big( \int_{r\in (0,|x-x'|]} \frac{dr}{r^{1+\alpha}} \|D\varphi\|_{\dot {\mathcal C}^\vartheta} |x-x'|^{\vartheta}r
+\int_{r\ge |x-x'| }\frac{dr}{r^{1+\alpha}}  \|D\varphi\|_{\dot {\mathcal C}^\vartheta}  r^{\vartheta}|x-x'|\Big) \\
&\le & C_{\vartheta,\alpha} \bd_{2\eta,s,t}(\bP_1,\bP_2)\|D\varphi\|_{\dot {\mathcal C}^\vartheta} |x-x'|^{1+\vartheta-\alpha},
\end{eqnarray*}
using \eqref{CTR_D} for the last but one inequality and $\alpha>\vartheta $ for the last one (see equation \eqref{BD_CTR_U_I_DELTA_MOD_HOLDER_GRADIENT}).

The only modifications needed for $\alpha\ge 1 $ concern the \textit{small jumps}. Indeed, we can introduce the compensator only up to the threshold $|x-x'|$. We are simply led to analyze:
\begin{eqnarray*}
\Big|\int_0^1d\lambda  \int_{r\in (0,|x-x'|]}\frac{dr}{r^{1+\alpha}} \int_{{\mathbb S}^{d-1}} \Lambda_{{\mathbb S}^{d-1}}(d\theta)\Big([D\varphi(x+\lambda r\theta) -D\varphi(x) ]-[D\varphi(x'+\lambda r\theta)-D\varphi(x')] \Big) \cdot r\theta\\
\times \delta D(v,x',\theta,\bP_1{(v)},\bP_2{(v)}) \Big|\\
\le C\bd_{2\eta,s,t}(\bP_1,\bP_2)\int_{r\in (0,|x-x'|]}\frac {dr}{r^{1+\alpha}}\|D\varphi\|_{\dot {\mathcal C}^\vartheta}  r^{1+\vartheta}\le C_{\alpha,\vartheta} \bd_{2\eta,s,t}(\bP_1,\bP_2)\|D\varphi\|_{\dot {\mathcal C}^\vartheta} |x-x'|^{1+\vartheta-\alpha}.
\end{eqnarray*}
This indeed gives \eqref{CTR_D2}. Plugging \eqref{CTR_D1} and \eqref{CTR_D2} into \eqref{SPLIT_D1_D2} we derive the statement \eqref{DIFF_L_MES}.
 \end{proof}

The bound in \eqref{CTR_DIST_OP} (resp. \eqref{CTR_DIST_OP_HOLDER}) then readily follows from \eqref{DIFF_DRIFT_SIMPLE}, \eqref{DIFF_MEAS_SIMPLE} (resp. \eqref{DIFF_B_MES}, \eqref{DIFF_L_MES}) and the a priori bounds \eqref{BD_CTR_U_I_DELTA_GRADIENT}, \eqref{BD_CTR_U_I_DELTA_MOD_HOLDER_GRADIENT} which {have been} established {in Lemma \ref{CHARACTERIZATION_OF_GRADIENT_EXPLOSION_AND_HOLDER_MODULUS} proven above}.

{We conclude this section by discussing a bit why in the current McKean-Vlasov setting the absolute continuity condition on the spectral measure (Assumption \A{AC}) seems essential.

\begin{remark}[About the absolute continuity of the spectral measure]
The key point is that, when considering the difference of the non-local operators associated with two different measure arguments it seems difficult to make explicitly the distance between the considered measures appear. Let us illustrate this fact considering the elementary following case.  Consider $\nu(dz) =\frac 12\sum_{i=1}^d (\delta_{e_i}+\delta_{-e_i})\frac{dr}{r^{1+\alpha}} $, where the $(e_i)_{i\in \leftB 1,d\rightB}$ correspond to the canonical vectors, corresponding, up to a suitable normalizing constant to the example of the cylindrical Laplacian. In this case we cannot do the change of variable of \eqref{LA_DIFF_DES_OP_AVEC_TRANSFERT_DENSITE} and directly write for the difference:
\begin{eqnarray}
&&\Big| (L_v^{\bP_1,\alpha}-L_v^{\bP_2,\alpha}) \varphi(x)\Big|\notag\\
&\le&\Big|\sum_{i=1}^d \frac 12\sum_{j\in \{0,1\}}{\rm p.v.}  \int_{0}^{+\infty}\frac{dr}{r^{1+\alpha}}
\Big(\big[ \varphi(x+(-1)^j \sigma_ x^{v,\bP_1}  re_i)
-\varphi(x)\big] -\big[ \varphi(x+(-1)^j \sigma_ x^{v,\bP_2}  re_i)
-\varphi(x)\big]\Big)\Big|.\notag\\
\label{EQ_DISC_AC}
\end{eqnarray}
As usual, the strategy consists in separating the small and large jumps. Let us first consider the case $\alpha<1 $. In that case we write, for a threshold ${\mathscr T} $ (possibly depending on $(\sigma_x^{v,\bP_i})_{i\in \{1,2\}} $):
\begin{eqnarray*}
&&\Big|\sum_{i=1}^d \frac 12\sum_{j\in \{0,1\}}{\rm p.v.}  \int_{0}^{\mathscr T}\frac{dr}{r^{1+\alpha}}
\Big(\big[ \varphi(x+(-1)^j \sigma_ x^{v,\bP_1}  re_i)
-\varphi(x)\big] -\big[ \varphi(x+(-1)^j \sigma_ x^{v,\bP_2}  re_i)
-\varphi(x)\big]\Big)\Big|\\
&=&\Big|\sum_{i=1}^d \frac 12\sum_{j\in \{0,1\}}{\rm p.v.}  \int_{0}^{\mathscr T}\frac{dr}{r^{1+\alpha}}
\int_0^1 d\lambda  D \varphi(x+(-1)^j ((1-\lambda)\sigma_ x^{v,\bP_1}  +\lambda \sigma_ x^{v,\bP_2} ) e_i r)\cdot (-1)^j (\sigma_ x^{v, \bP_1}-\sigma_ x^{v, \bP_2} )  re_i
\big]\Big)\Big|\\
&\le& C_{\alpha,{\mathscr T}}\|D\varphi\|_\infty \bd_{2\eta}(\bP_1(v),\bP_2(v)).
\end{eqnarray*}
Hence, since the required term $\bd_{2\eta}(\bP_1(v),\bP_2(v)) $ appears this way, it seems natural to consider a threshold ${\mathscr T} $ at a \textit{macro} scale, and in particular independent of $\sigma_ x^{v,\bP_2}-\sigma_ x^{v,\bP_1} $. 
This choice anyhow yields problems for the \textit{large jumps}, i.e. those above the threshold ${\mathscr T}$. Namely,
\begin{eqnarray*}
&&\Big|\sum_{i=1}^d \frac 12\sum_{j\in \{0,1\}}{\rm p.v.}  \int_{\mathscr T}^{+\infty}\frac{dr}{r^{1+\alpha}}
\Big(\big[ \varphi(x+(-1)^j \sigma_ x^{v,\bP_1}  re_i)
-\varphi(x)\big] -\big[ \varphi(x+(-1)^j \sigma_ x^{v,\bP_2}  re_i)
-\varphi(x)\big]\Big)\Big|\\
& \le &\sum_{i=1}^d \frac 12\sum_{j\in \{0,1\}} 
\int_{\mathscr T}^{+\infty}\frac{dr}{r^{1+\alpha}}
\|\varphi\|_{\dot {\mathcal C}^\beta} |\sigma_ x^{v,\bP_2}-\sigma_ x^{v,\bP_1} |^\beta  r^\beta,
\end{eqnarray*}
for any $\beta \in (0,1) $ s.t.  $\beta<\alpha $ in order to preserve some integrability and where $\|\varphi\|_{\dot {\mathcal C}^\beta} $ again stands for the $\beta $-H\"older modulus of $\varphi $ ({homogeneous H\"older norm}) which can be estimated from $\|\varphi\|_\infty, \|D\varphi\|_\infty $ through elementary interpolation. Hence, we cannot recover that way the required control with exactly the distance, we only end up with:
\begin{eqnarray*}
&&\Big|\sum_{i=1}^d \frac 12\sum_{j\in \{0,1\}}{\rm p.v.}  \int_{\mathscr T}^{+\infty}\frac{dr}{r^{1+\alpha}}
\Big(\big[ \varphi(x+(-1)^j \sigma_ x^{v,\bP_1}  re_i)
-\varphi(x)\big] -\big[ \varphi(x+(-1)^j \sigma_ x^{v,\bP_2}  re_i)
-\varphi(x)\big]\Big)\Big|\\
&\le& C_{\alpha,\beta,{\mathscr T}} \|\varphi\|_{\dot {\mathcal C}^\beta} \bd_{\eta}^\beta(\bP_1(v),\bP_2(v)).
\end{eqnarray*}
One could object that we do not have exploited the \textit{full} regularity of $\varphi \in {\mathcal C}^{1+\vartheta}(\R^d,\R) $. But for the small jumps this will not a priori improve the dependence in the distance (see also the small jump part for $\alpha>1 $ below).

If now $\alpha> 1 $ the problems are reversed. Indeed, the large jumps can be readily controlled with the expected bound. Indeed,
\begin{eqnarray*}
&&\Big|\sum_{i=1}^d \frac 12\sum_{j\in \{0,1\}}{\rm p.v.}  \int_{\mathscr T}^{+\infty}\frac{dr}{r^{1+\alpha}}
\Big(\big[ \varphi(x+(-1)^j \sigma_ x^{v,\bP_1}  re_i)
-\varphi(x)\big] -\big[ \varphi(x+(-1)^j \sigma_ x^{v,\bP_2}  re_i)
-\varphi(x)\big]\Big)\Big|\\
&=&\Big|\sum_{i=1}^d \frac 12\sum_{j\in \{0,1\}}{\rm p.v.}  \int_{\mathscr T}^{+\infty}\frac{dr}{r^{1+\alpha}}
\|D\varphi\|_\infty |\sigma_ x^{v,\bP_2}-\sigma_ x^{v,\bP_1} |  r\Big|\le C_{\alpha,{\mathscr T}}\|D\varphi\|_\infty \bd_{2\eta}(\bP_1(v),\bP_2(v)).
\end{eqnarray*}
On the other hand, for the small jumps, we are led to use explicitly the $\vartheta $-H\"older regularity of the gradient $D\varphi $ (recall from \eqref{BD_CTR_U_I_DELTA_MOD_HOLDER_GRADIENT} that $\vartheta=\vartheta(\alpha,\eta,\varepsilon)=\alpha\Big[1-\varepsilon-\big(\frac 1\alpha-\frac \eta\alpha\big)\Big] $). Write precisely, 
\begin{eqnarray*}
&&\Big|\sum_{i=1}^d \frac 12\sum_{j\in \{0,1\}}{\rm p.v.}  \int_{0}^{\mathscr T}\frac{dr}{r^{1+\alpha}}
\Big(\big[ \varphi(x+(-1)^j \sigma_ x^{v,\bP_1}  re_i)
-\varphi(x)\big] -\big[ \varphi(x+(-1)^j \sigma_ x^{v,\bP_2}  re_i)
-\varphi(x)\big]\Big)\Big|\\
&=&\Big|\sum_{i=1}^d \frac 12\sum_{j\in \{0,1\}} (-1)^j  {\rm p.v.}  \int_{0}^{\mathscr T}\frac{dr}{r^{1+\alpha}}
\Big(\int_0^1 d\lambda \big[ D\varphi(x+\lambda (-1)^j \sigma_ x^{v,\bP_1}  re_i)\cdot  \sigma_ x^{v,\bP_1}  \underline{+} D\varphi(x+\lambda (-1)^j \cdot\sigma_ x^{v,\bP_1}  re_i) \sigma_ x^{v,\bP_2}
\\
&&- D\varphi(x+ \lambda(-1)^j \sigma_ x^{v,\bP_2}  re_i)\cdot \sigma_ x^{v,\bP_2}  \big]\Big)re_i\Big|\\
&= &\Big|\sum_{i=1}^d \frac 12\sum_{j\in \{0,1\}} (-1)^j  {\rm p.v.}  \int_{0}^{\mathscr T}\frac{dr}{r^{1+\alpha}}
\Big(\int_0^1 d\lambda \big[ (D\varphi(x+\lambda (-1)^j \sigma_ x^{v,\bP_1}  re_i)-D\varphi(x)) \cdot(\sigma_ x^{v,\bP_1}-\sigma_ x^{v,\bP_2})\\
&&  + \big(D\varphi(x+\lambda (-1)^j \sigma_ x^{v,\bP_1}  re_i) - D\varphi(x+ \lambda (-1)^j \sigma_ x^{v,\bP_2}  re_i)\big)\cdot \sigma_ x^{v,\bP_2}  \big]\Big)re_i\Big|\\
&\le & C \int_0^{\mathscr T} \frac{dr}{r^\alpha} \|D\varphi\|_{\dot {\mathcal C}^\vartheta}r^\vartheta \Big(\bd_{\eta}(\bP_1(v),\bP_2(v))+\bd_{\eta}^\vartheta(\bP_1(v),\bP_2(v)) \Big)\\
&\le&  C_{\alpha,\vartheta}\|D\varphi\|_{\dot {\mathcal C}^\vartheta} \Big(\bd_{\eta}(\bP_1(v),\bP_2(v))+\bd_{\eta}^\vartheta(\bP_1(v),\bP_2(v)) \Big),
\end{eqnarray*}
where we used a cancellation argument for the first  inequality, and exploit as well that $\alpha-\vartheta=1-\eta +\varepsilon \alpha<1 $ for $\varepsilon$ small enough, for the last one. In any case we see that this is not enough to conclude whereas assumption \A{AC} allows to transfer all the sensitivity analysis to the spectral measure (see again the proof of the previous lemma).
\end{remark}

\subsection{Proof of Lemma \ref{NEW_LEMMA_R}}
\label{SEC_PROOF_NEW_LEMMA_R}
The stated controls follow reproducing the \textit{transfer} arguments of the diffusion coefficient to the density of the L\'evy measure in equation \eqref{LA_DIFF_DES_OP_AVEC_TRANSFERT_DENSITE} for the integro-differential part. The gradient term is controlled directly. Namely, from \eqref{THE_TERM_PERTURB_FIXED_MEASURE},
\begin{eqnarray*}
|R_{\bP_1}^{\alpha,(\tau,\xi)}(v,y)|&\le& |\big(b(v,y,\bP_1(v))-b(v,\theta_{v,\tau}(\xi),\bP_1(v)) \big) \cdot D w(v, y)|+|\Big(L_s^{\bP_1,\alpha}-\tilde L_s^{\bP_1,\alpha,(\tau,\xi)}\Big) w(v,y)|\\
 &\le& \|b(v,\cdot,\bP_1(v))\|_{\dot {\mathcal C}^{2\eta}}|y-\theta_{v,\tau}(\xi)|^{2\eta}|Dw(v,y)|+ \bigg|{\rm p.v.} \int_{\R^d} \big[\varphi(y+\sigma(v,y,\bP_1(v))z)-\varphi(y)\big] \nu(dz)\\
&& -{\rm p.v.} \int_{\R^d} \big[\varphi(y+\sigma(v,\theta_{v,\tau}(\xi),\bP_1(v))z)-\varphi(y)\big] \nu(dz)\bigg|
 \notag \\
 &\le&\|b(v,\cdot,\bP_1(v))\|_{\dot {\mathcal C}^{2\eta}}|y-\theta_{v,\tau}(\xi)|^{2\eta}|Dw(v,y)|+ \bigg|{\rm p.v.} \int_{0}^{+\infty}\frac{dr}{r^{1+\alpha}}\int_{{\mathbb S}^{d-1}}\Lambda_{{\mathbb S}^{d-1}}(d\theta)
\big[ \varphi(y+r\theta)
-\varphi(y)\big]\notag\\
&&\times 
 \Big( \frac{g(\frac{(\sigma(v,y,\bP_1(v)))^{-1} \theta}{|(\sigma(v,y,\bP_1(v)))^{-1} \theta|})}{|(\sigma(v,y,\bP_1(v)))^{-1} \theta|^{d+\alpha} \d\big( \sigma(v,y,\bP_1(v))\big)}\\
&&-
\frac{g(\frac{(\sigma(v,\theta_{v,\tau}(\xi),\bP_1(v)))^{-1} \theta}{|(\sigma(v,\theta_{v,\tau}(\xi),\bP_1(v)))^{-1} \theta|})}{|(\sigma(v,\theta_{v,\tau}(\xi),\bP_1(v)))^{-1} \theta|^{d+\alpha} \d\big( \sigma(v,\theta_{v,\tau}(\xi),\bP_1(v))\big)}
\bigg|.
\end{eqnarray*}
Again the Lipschitz regularity of $g$, the non-degeneracy and $2\eta $-H\"older spatial continuity of $\sigma$ and the symmetry of the L\'evy measure yield:
\begin{eqnarray}\label{CTR_PT_R_LEM}
 \big|R_{\bP_1}^{\alpha,(\tau,\xi)}(v,y)\big|
\le C\Big(\|b(v,\cdot,\bP_1(v))\|_{\dot {\mathcal C}^{2\eta}}|D_y w(v, y)|+ \|\sigma(v,\cdot,\bP_1(v))\|_{{\mathcal C}^{2\eta}}\big[ (\|w\|_\infty+\|D_x w(v,\cdot)\|_\infty)\I_{\left\{\alpha<1\right\}}\notag\\
+\|D_xw(v,\cdot)\|_{{\mathcal C}^{\vartheta}}\I_{\left\{\alpha>1\right\}}+\|w(v,\cdot)\|_{{\mathcal C}^{1+\vartheta}}\I_{\left\{\alpha=1\right\}}\big]\Big)|y-\theta_{v,\tau}(\xi)|^{2\eta},
\end{eqnarray}
which the is required statement.


\section{Theorem \ref{THM_WP_NL} in the supercritical case under the sole condition $\alpha > 2\eta \vee (1-\eta)$}
\label{sec:modification:proof}

In order to establish Theorem \ref{THM_WP_NL} under {\bf(A${}_S $)} and the sole condition $\alpha > 2\eta \vee (1-\eta)$ in the supercritical case $\alpha <1$, we proceed as follows. Restarting from \eqref{PREAL_CIRC} with $h \in \mathcal{C}^{\gamma_1}$, with $\alpha + \gamma_1 >1$ and $\gamma_1 \in (0,2\eta]$ to be specified later on and employing \eqref{CTR_DIST_OP} instead of \eqref{CTR_H_BSUP}, similarly to \eqref{ALL_READY_FOR_CIRCULAR}, we derive:
\begin{eqnarray}
|w(s,x)|&\le& C\bigg(\bd_{2\eta,s,t}(\bP_1,\bP_2)(t-s)^{ 1-(\frac{1}{\alpha}-\frac{\gamma_1}{\alpha})}+(t-s)\|w\|_\infty
+\int_s^t dv (v-s)^{\frac{2\eta}{\alpha}} \|Dw(v,\cdot)\|_\infty\bigg)
\label{PREAL_CIR_SUPER_CRITICAL_APP}
\end{eqnarray}

Observe that, for the first term of the above r.h.s., we have assumed that $(\frac{1}{\alpha}-\frac{\gamma_1}{\alpha})<1\iff \alpha+\gamma_1>1  $. This seems to be somehow a necessary condition to derive a smoothing effect for the contribution $H_{\bP_1,\bP_2}^\delta $. 

Let us now proceed  from the cancellation argument as in \eqref{DIFF_WITH_CANCEL}. We get from \eqref{CTR_DIST_OP_HOLDER} that in the current case:
 \begin{eqnarray}\label{CTR_H_BSUP_HOLDER_alpha_LE_1_WITH_ALL_APP}
|H_{\bP_1,\bP_2}u_2^\delta(v, y)-H_{\bP_1,\bP_2}u_2^\delta(v,\theta_{v,\tau}(\xi))|&\le& C\bd_{(1-\lambda)2\eta}(\bP_1(v),\bP_2(v)) |y-\theta_{v,\tau}(\xi)|^{2\lambda \eta} (t-v)^{-\big(\frac 1\alpha-\frac{\gamma_1}{ \alpha}\big)}\notag\\
&&+ C\bd_{2\eta}(\bP_1(v),\bP_2(v)) |y-\theta_{v,\tau}(\xi)|^\vartheta(t-v)^{-1+\varepsilon}
\end{eqnarray}
recalling from \eqref{BD_CTR_U_I_DELTA_MOD_HOLDER_GRADIENT} that $0< \vartheta:=\alpha\Big[1-\varepsilon-\big(\frac 1\alpha-\frac{\gamma_1}{\alpha}\big)\Big]< \alpha-1+\gamma_1$.

Now, similarly to \eqref{PREAL_CTR_ALPHA_GE_1_GRAD}, using \eqref{CTR_H_BSUP_HOLDER_alpha_LE_1_WITH_ALL_APP} on the time interval $[s, \frac{t+s}{2}]$ and \eqref{CTR_DIST_OP} on the time interval $[\frac{t+s}{2}, t]$, taking also $\lambda$ large enough so that $2\lambda \eta+\alpha >1$, we get:
\begin{eqnarray}
|Dw(s,x)|&\le& C\Big(\int_s^{\frac{t+s}{2}}  (v-s)^{-\frac{1}{\alpha}+ \frac{2\lambda \eta}{\alpha}}\bd_{(1-\lambda)2\eta}(\bP_1(v),\bP_2(v)) (t-v)^{-\frac{1-\gamma_1}{\alpha}} \, dv\notag\\
&& 
+ \int_s^{\frac{t+s}{2}}  (v-s)^{-\frac{1}{\alpha}+ \frac{\vartheta}{\alpha}}\bd_{2\eta}(\bP_1(v),\bP_2(v)) (t-v)^{-1+\varepsilon} \, dv\notag\\
&&
+ \int_{\frac{t+s}{2}}^{t}  (v-s)^{-\frac{1}{\alpha}}   \bd_{2\eta}(\bP_1(v),\bP_2(v)) (t-v)^{-\frac{1-\gamma_1}{\alpha}} \, dv \notag\\
&&
+\int_s^t  (v-s)^{-\frac 1\alpha+\frac{2\eta}{\alpha}}\big(\|w(v, .)\|_\infty+\|Dw(v,.)\|_\infty\big) \, dv\Big)\notag\\
&\le &C\Big(\bd_{(1-\lambda) 2\eta,s,t}(\bP_1,\bP_2) (t-s)^{1-(\frac 2\alpha-\frac{2 \lambda\eta}{\alpha} - \frac{\gamma_1}{\alpha}) } + \bd_{2\eta,s,t}(\bP_1,\bP_2) (t-s)^{1-(\frac 2\alpha- \frac{\gamma_1}{\alpha})} \notag\\
& & \quad +\int_s^t  (v-s)^{-\frac 1\alpha+\frac{2\eta}{\alpha}}\big(\|w\|_{\infty}+\|Dw(v,\cdot)\|_\infty\big) \, dv\Big),\label{CTR_GRAD_ALPHA_SUPER_CRITICAL_APP}
\end{eqnarray}
provided that $\vartheta$ is sufficiently large so that
\begin{equation}\label{COND_SUPER_CRITIQUE_APP}\vartheta+ \alpha >1.
\end{equation}
With the notation of \eqref{DEF_PHI} for $\big(\Phi(v)\big)_{v\in [s,t]} $,  taking in our current supercritical case  $\Xi= \frac{1}{\alpha}-\frac{\gamma_1}{\alpha} + \frac{\varepsilon}{2} <1$, we derive from 
\eqref{PREAL_CIR_SUPER_CRITICAL_APP} and \eqref{CTR_GRAD_ALPHA_SUPER_CRITICAL_APP}:
\begin{eqnarray}
|w(s,x)|&\le& C\bigg(\bd_{2 \eta,s,t}(\bP_1,\bP_2)(t-s)^{ 1-(\frac{1}{\alpha}-\frac{\gamma_1}{\alpha})}
+(t-s)\|w\|_\infty+ \sup_{v\in [s,t]}\Phi(v)\int_s^t  (v-s)^{\frac{2\eta}{\alpha}}(t-v)^{-\Xi} \, dv \bigg)\notag\\
&\le& C\bigg(\bd_{2 \eta,s,t}(\bP_1,\bP_2)(t-s)^{ 1-(\frac{1}{\alpha}-\frac{\gamma_1}{\alpha})}+ (t-s)\|w\|_\infty
+\sup_{v\in [s, t]}\Phi(v) (t-s)^{1+\frac{2\eta}{\alpha}-\Xi} \bigg),
\label{PREAL_CIR_SUPER_CRITICAL_2_APP}
\end{eqnarray}
and 
\begin{eqnarray*}
|Dw(s,x)|
&\le &C\Big(\bd_{(1-\lambda) 2\eta,s,t}(\bP_1,\bP_2) (t-s)^{1-(\frac 2\alpha-\frac{2 \lambda\eta}{\alpha} - \frac{\gamma_1}{\alpha}) } 
+ \bd_{2\eta,s,t}(\bP_1,\bP_2) (t-s)^{1-(\frac 2\alpha- \frac{\gamma_1}{\alpha})}  \\
& & \quad +\sup_{v\in [s,t]}\Phi(v)\int_s^t  (v-s)^{-\frac 1\alpha+\frac{2\eta}{\alpha}}(t-v)^{-\Xi} \, dv\Big)\\
&\le &C\Big(\bd_{(1-\lambda) 2\eta,s,t}(\bP_1,\bP_2) (t-s)^{1-(\frac 2\alpha-\frac{2 \lambda\eta}{\alpha} - \frac{\gamma_1}{\alpha}) } 
+ \bd_{2\eta,s,t}(\bP_1,\bP_2) (t-s)^{1-(\frac 2\alpha- \frac{\gamma_1}{\alpha})}  +\sup_{v\in [s,t]}\Phi(v)(t-s)^{1-(\frac 1\alpha-\frac{2\eta}\alpha)-\Xi}\Big)
\end{eqnarray*}
which in turn gives
\begin{align*}
(t-s)^{\Xi}|Dw(s,x)|& \le C\Big(\bd_{(1-\lambda) 2\eta,s,t}(\bP_1,\bP_2) (t-s)^{1-(\frac 2\alpha-\frac{2 \lambda\eta}{\alpha} - \frac{\gamma_1}{\alpha} ) + \Xi } 
+ \bd_{2\eta,s,t}(\bP_1,\bP_2) (t-s)^{1-(\frac 2\alpha- \frac{\gamma_1}{\alpha})+ \Xi} \\
& \quad   +\sup_{v\in [s,t]}\Phi(v)(t-s)^{1-(\frac 1\alpha-\frac{2\eta}\alpha)}\Big).
\end{align*}

Taking into account that $0< t-s \leq T$ is sufficiently small, from the preceding inequality, we obtain
$$
\sup_{v\in [s,t]}\Phi(v) \leq C\Big(\bd_{(1-\lambda) 2\eta,s,t}(\bP_1,\bP_2) (t-s)^{1-(\frac 2\alpha-\frac{2 \lambda\eta}{\alpha} - \frac{\gamma_1}{\alpha} ) + \Xi } +\bd_{2\eta,s,t}(\bP_1,\bP_2) (t-s)^{1-(\frac 2\alpha- \frac{\gamma_1}{\alpha}) + \Xi} \Big)
$$

\noindent and plugging the previous estimate into \eqref{PREAL_CIR_SUPER_CRITICAL_2_APP} yields
\begin{align*}
|w(s, x)|&\le C\bigg(\bd_{2 \eta, s, t}(\bP_1,\bP_2) [ (t-s)^{ 1-(\frac{1}{\alpha}-\frac{\gamma_1}{\alpha})} + (t-s)^{2-(\frac{2}{\alpha}-\frac{2\eta}{\alpha}-\frac{\gamma_1}{\alpha})}]
+\bd_{(1-\lambda) 2\eta,s,t}(\bP_1,\bP_2) (t-s)^{2-(\frac 2\alpha-\frac{2 \lambda\eta}{\alpha} - \frac{2\eta}{\alpha}- \frac{\gamma_1}{\alpha} )  } \notag \\ 
& \quad + (t-s)\|w\|_\infty\bigg)\notag\\
&\le C\bigg(\bd_{2 \eta, s, t}(\bP_1,\bP_2) (t-s)^{ 1-(\frac{1}{\alpha}-\frac{\gamma_1}{\alpha})}
+\bd_{(1-\lambda) 2\eta,s,t}(\bP_1,\bP_2) (t-s)^{2-(\frac 2\alpha-\frac{2 \lambda\eta}{\alpha} - \frac{2\eta}{\alpha}- \frac{\gamma_1}{\alpha} )  } +(t-s)\|w\|_\infty \bigg)\notag
\end{align*}

\noindent where for the last inequality we used the fact that $2\eta+\alpha >1$. 

We now select $\gamma_1 = 2\eta$. We importantly recall the condition on the regularization parameter $\vartheta$: $1-\alpha < \vartheta < \alpha-1+\gamma = \alpha -1 + 2\eta$ so that a necessary condition is $\alpha+\eta>1$. Taking into account this condition, we next optimize the previous inequality on $w$ w.r.t $h \in \mathcal{C}^{\gamma_1}(\R^d)$. We thus deduce
\begin{equation}
\bd_{2\eta, s, t}(\mathscr{\mathbf T}(\bP_1),\mathscr{\mathbf T}(\bP_2)) \le C\bigg(\bd_{2 \eta, s, t}(\bP_1,\bP_2) (t-s)^{ 1-(\frac{1}{\alpha}-\frac{2\eta}{\alpha})}
+\bd_{(1-\lambda) 2\eta,s,t}(\bP_1,\bP_2) (t-s)^{2-(\frac 2\alpha-\frac{2 \lambda\eta}{\alpha} - \frac{4\eta}{\alpha})  } \bigg) \label{inequality:first:iteration:T:operator}
\end{equation}

\noindent for any $\lambda \in [0,1]$ s.t. $2\lambda \eta+ \alpha >1$ under the assumption that $\alpha + \eta>1$.

Our final step consists in iterating the previous analysis. We aim at establishing an estimate similar to \eqref{inequality:first:iteration:T:operator} but for the map $\mathscr{\mathbf T}^2$ and a distance $\bd_{(1-\lambda)2\eta, s, t}$ for a well-chosen $\lambda \in [0,1]$. Keeping in mind the inequality \eqref{inequality:first:iteration:T:operator}, we proceed in a similar manner but replacing $\bP_i$ by $\mathbf{T}(\bP_i)$. For $h \in \mathcal{C}^{\gamma_2}$, we consider $u_i^\delta(s, x)$, $i=1,2$ satisfying
\begin{equation}\label{EDP_U_I_DELTA_APP}
\begin{cases}
\Big(\partial_s +\mathscr{A}_s^{\mathscr{\mathbf T}(\bP_i)}\Big)u_i^\delta(s, x) =0,\ (s,x)\in [0,t)\times \R^d,\\
u_i^\delta(t,x)=h^\delta(x),\ x\in \R^d,
\end{cases}
\end{equation}

\noindent and $u_i^\delta\in L^\infty([0,t],C^{\alpha+\gamma_2}(\R^d,\R)) $. 

To compare both semigroups, we again write the PDE satisfied by $w(s, x):=w^\delta(s, x)=(u_1^\delta-u_2^\delta)(s, x)$. Namely,
\begin{equation}
\label{PDE_W_APP}
\left\{
\begin{split}
\Big(\partial_s +\mathscr{A}_s^{\mathscr{\mathbf T}(\bP_1)})w(s,x)&=-\Big( \big[b(s,x,\mathscr{\mathbf T}(\bP_1))-b(s,x,\mathscr{\mathbf T}(\bP_2)) \big]\cdot D_xu_2^\delta(s,x)\\
&\hspace*{.5cm}+(L_s^{\mathscr{\mathbf T}(\bP_1),\alpha}-L_s^{{{\mathbf T}}(\bP_2),\alpha})u_2^\delta(s,x) \Big)=: -H_{\mathscr{\mathbf T}(\bP_1),\mathscr{\mathbf T}(\bP_2)} u^{\delta}_2(s,x),\ (s,x)\in [0,t)\times \R^d,\\
w(t,x)&=0,\ x\in \R^d.
\end{split}
\right.
\end{equation}

The inequality \eqref{CTR_H_BSUP} now becomes
 \begin{align}
|H_{\mathbf{T}(\bP_1),\mathbf{T}(\bP_2)}u_2^\delta(v, y)|&\le C\bd_{2\eta}(\mathbf{T}(\bP_1)(v),\mathbf{T}(\bP_2)(v))  (t-v)^{-\frac{1-\gamma_2}{\alpha}} .\label{new:estimate:H:operator}
\end{align}

This new estimate will be combined with \eqref{inequality:first:iteration:T:operator} in the sequel. Similarly to \eqref{PREAL_CIR_SUPER_CRITICAL_APP}, from \eqref{inequality:first:iteration:T:operator}, we get
\begin{eqnarray}
|w(s,x)|&\le& C\bigg(\bd_{2\eta,s,t}(\mathbf{T}(\bP_1),\mathbf{T}(\bP_2))(t-s)^{ 1-(\frac{1}{\alpha}-\frac{\gamma_2}{\alpha})}
+(t-s)\|w\|_\infty +\int_s^t dv (v-s)^{\frac{2\eta}{\alpha}} \|Dw(v,\cdot)\|_\infty\bigg) \notag\\
& \leq & C \bigg(\bd_{(1-\lambda)2 \eta, s, t}(\bP_1,\bP_2) (t-s)^{ 2-(\frac{2}{\alpha}-\frac{2\eta}{\alpha} - \frac{\gamma_2}{\alpha})}+(t-s)\|w\|_\infty +\int_s^t dv (v-s)^{\frac{2\eta}{\alpha}} \|Dw(v,\cdot)\|_\infty \bigg)
\label{PREAL_CIR_SUPER_CRITICAL_NEXT_STEP}
\end{eqnarray}

\noindent where we used the direct inequality $\bd_{2 \eta, s, t}(\bP_1, \bP_2) \leq \bd_{(1-\lambda) 2\eta, s, t}(\bP_1, \bP_2)$. Remark also that similarly to \eqref{CTR_GRAD_ALPHA_SUPER_CRITICAL_APP}, using directly \eqref{new:estimate:H:operator}, as soon as $\eta+\alpha>1$ and $\gamma_2+\alpha>1$, one has
\begin{align}
|Dw(s, x)|&\le C\Big(\int_s^{t}  (v-s)^{-\frac{1}{\alpha}}\bd_{2\eta}(\mathbf{T}(\bP_1)(v),\mathbf{T}(\bP_2)(v)) (t-v)^{-\frac{1-\gamma_2}{\alpha}} \, dv\notag\\
&\quad \quad +\int_s^t  (v-s)^{-\frac 1\alpha+\frac{2\eta}{\alpha}}\big(\|w(v, .)\|_\infty+\|Dw(v,.)\|_\infty\big) \, dv\Big)\notag\\
&\le C\Big(\int_s^{t}  (v-s)^{-\frac{1}{\alpha}}\bd_{2\eta, s, v}(\mathbf{T}(\bP_1),\mathbf{T}(\bP_2)) (t-v)^{-\frac{1-\gamma_2}{\alpha}} \, dv\notag\\
&\quad \quad +\int_s^t  (v-s)^{-\frac 1\alpha+\frac{2\eta}{\alpha}}\big(\|w(v, .)\|_\infty+\|Dw(v,.)\|_\infty\big) \, dv\Big)\notag\\
&\le C  \bd_{(1-\lambda)2\eta, s, t}(\bP_1,\bP_2) \int_s^{t} (v-s)^{ 1-(\frac{2}{\alpha}-\frac{2\eta}{\alpha})} (t-v)^{-\frac{1-\gamma_2}{\alpha}} \, dv\notag\\
&\quad \quad + C \int_s^t  (v-s)^{-\frac 1\alpha+\frac{2\eta}{\alpha}}\big(\|w(v, .)\|_\infty+\|Dw(v,.)\|_\infty\big) \, dv\notag\\
&\le C\Big(\bd_{(1-\lambda) 2\eta,s,t}(\bP_1,\bP_2) (t-s)^{2-(\frac 3\alpha-\frac{2\eta}{\alpha} - \frac{\gamma_2}{\alpha}) }   +\int_s^t  (v-s)^{-\frac 1\alpha+\frac{2\eta}{\alpha}}\big(\|w(v,.)\|_{\infty}+\|Dw(v,\cdot)\|_\infty\big) \, dv\Big),\label{CTR_GRAD_ALPHA_SUPER_CRITICAL_NEXT_STEP}
\end{align}

\noindent for any $\lambda $ s.t. $2\lambda \eta + \alpha>1$ and where we used \eqref{inequality:first:iteration:T:operator} for the last but one inequality. We now proceed in a completely analogous manner as the first iteration. We thus conclude
$$
{\|w\|_\infty} \leq C\bd_{(1-\lambda)2 \eta, s, t}(\bP_1,\bP_2) (t-s)^{ 2-(\frac{2}{\alpha}-\frac{2\eta}{\alpha} - \frac{\gamma_2}{\alpha})}.
$$

We now select $\gamma_2=(1-\lambda) 2\eta$ with $\lambda $ large enough so that $2\lambda \eta + \alpha>1$ and small enough so that $\alpha+\gamma_2>1$. Note that this choice is licit under our current assumption $\alpha+\eta>1$ and that it also yields $2-(\frac{2}{\alpha}-\frac{2\eta}{\alpha} - \frac{\gamma_2}{\alpha})>0$. We next optimize the previous inequality on $w$ w.r.t $h \in \mathcal{C}^{\gamma_2}(\R^d)$
$$
\bd_{(1-\lambda)2\eta, s, t}(\mathscr{\mathbf T}^{(2)}(\bP_1),\mathscr{\mathbf T}^{(2)}(\bP_2))  \leq C\bd_{(1-\lambda)2 \eta, s, t}(\bP_1,\bP_2) (t-s)^{ 2-(\frac{2}{\alpha}-\frac{2\eta}{\alpha} - \frac{\gamma_2}{\alpha})}.
$$

This shows that $\mathscr{\mathbf T}^{(2)}$ is a contraction on the complete metric space $\mathcal{\boldsymbol A}_{s, s+T, \mu}$ w.r.t the distance $\bd_{(1-\lambda)2\eta, s, s+T}$ under the condition $\alpha+ \eta>1$ provided $T$ is sufficiently small. According to the Banach fixed point theorem, $\mathscr{\mathbf T}$ admits a unique fixed point which is the unique solution to the martingale problem on the time interval $[s, s+T]$. We eventually conclude following the same lines of reasonings as those employed at the end of the proof of Theorem \ref{THM_WP_NL}. We omit the remaining technical details.

\begin{remark}\label{remark:number:iterate:picard}
We point out that in the previous argument only two iterations of the map $\bold{T}$ were needed to derive our result. One may naturally ask if one can do better by performing more iteration of $\bold{T}$. This is not clear since the necessary condition $\alpha + \eta>1$ (coming from the regularization parameter $\vartheta$) already appears at the first iterate and seems to be the natural one if one wants to be able to compare the distance $\db_{\gamma, s, t}(\bold{T}(\bP_1, \bP_2))$ with the distance $\db_{2\eta, s, t}(\bP_1, \bP_2)$ in order to prove a contraction property.    
\end{remark}

\section*{Acknowledgment}
For the second and the third authors, the study has been funded by the Russian Science Foundation (project No 17-11-01098).

\bibliographystyle{alpha}
\bibliography{bibli}

\end{document}